\newtheorem{thm}{Theorem}[section]
\newtheorem{cor}[thm]{Corollary}
\newtheorem{defn}[thm]{Definition}
\newtheorem{lem}[thm]{Lemma}
\newtheorem{prop}[thm]{Proposition}
\newtheorem{rem}[thm]{Remark}
\newtheorem{exam}[thm]{Example}
\numberwithin{equation}{section}
\begin{document}
\title{\bf \Large A Bridge between Conway-Coxeter Friezes and Rational Tangles through the Kauffman Bracket Polynomials}
\author{
{Takeyoshi Kogiso}\\
{\footnotesize  Department of Mathematics, Josai University, }\\
{\footnotesize  1-1, Keyakidai Sakado, Saitama, 350-0295, Japan}\\
{\footnotesize  E-mail address: kogiso@josai.ac.jp}\\  
\\ 
{Michihisa Wakui}\\ 
{\footnotesize  Department of Mathematics, Faculty of Engineering Science,}\\ 
{\footnotesize  Kansai University, Suita-shi, Osaka 564-8680, Japan}\\
{\footnotesize  E-mail address: wakui@kansai-u.ac.jp}\\ 
}
\date{}

\maketitle 
\begin{abstract}
In the present paper, we build a bridge between Conway-Coxeter friezes and rational tangles through the Kauffman bracket polynomials.  
One can compute a Kauffman bracket polynomials attached to rational links by using Conway-Coxeter friezes. 
As an application one can give a complete invariant on Conway-Coxeter friezes of zigzag-type.
\end{abstract}

\baselineskip 14pt 

\section{Introduction}
\par 
A Conway-Coxeter frieze (abbreviated by CCF) was introduced  in 1973 as some frieze pattern, which is given by arranging positive integers under the ``unimodular" rule related to elements of $\text{SL}(2,{\Bbb Z})$, and it was classified via triangulated polygons \cite{CoCo1, CoCo2}. 
CCF's are related to cluster algebras, that are introduced by Fomin and Zelevinsky in \cite{FZ1,FZ2} at the first decades of the $21$st century. 
Cluster algebras are a class of commutative algebras which was shown to be connected to various areas of mathematics, including Lie theory, Poisson geometry, Teichm\"{u}ller theory, mathematical physics and representation theory of algebras. 
A cluster algebra is generated by cluster variables, obtained recursively by a combinatorial process known as mutation starting from cluster variables. 
In order to compute cluster variables, one may use CCF's. 
Various relationships are known between CCF's and cluster algebras of type $A_n$ (\cite{FZ1,FZ2}).
\par 
On the other hand, the Kauffman bracket polynomial is very important tool for studying knot theory and mathematical physics. 
It is a regular isotopy invariant of link diagrams, and the Jones polynomial of links can be recursively computed from it \cite{Kau-Topology}. 
The Kauffman bracket polynomial is naturally extended to that of tangles, which are $1$-dimensional topological objects introduced by Conway \cite{Conway} to classify knots and links in combinatorial and algebraic. 
As an important class of tangles there is a set of rational tangles, that are completely classified by rational numbers.  
\par 
The purpose of the paper is to build a bridge between CCF's and rational tangles through the Kauffman bracket polynomials.  
To do this we need a formula found by Shuji Yamada in 1995 for computing the Kauffman bracket  polynomials of rational tangles by using triangles 
associated with Farey neighbors of rational numbers \cite{Yamada-Proceeding}.  
Unfortunately, it seems that a proof of the formula is not elsewhere. 
In this paper, so after proving of his formula rewritten as our version, 
we show that the Kauffman bracket polynomial can be defined for CCF's of zigzag-type. 
Here, ``zigzag-type" means that CCF's are generated by words consisting of $L$ and $R$ (see Subsection~\ref{subsection2-2} for detail). 
Various properties of the Kauffman bracket polynomial of CCF's are described and shown. 
Furthermore, combining the Conway's classification result of rational tangles with our consideration on the Kauffman bracket polynomial of CCF's, 
a complete invariant of CCF's of zigzag-type is given. 
\par 
We expect for this method to define and compute  Kauffman  bracket polynomials for frieze patterns associated to cluster algebras of other type, and  also expect that our method would be useful to study analytic number theory.
\par 
Kyungyong Lee and Ralf Schiffler \cite{LS} give a very interesting formula to express Jones polynomials for 
$2$-bridge knots as specializations for cluster variables. 
Our method of determining Kauffman bracket polynomials of the $2$-bridge 
links are consistent with the Jones polynomials computed in \cite{LS}.
We will write another paper on the detail and  proof of the facts.
On the other hand, Wataru Nagai and Yuji Terashima \cite{NT} imply that cluster variables know both Alexander polynomials and Jones polynomials 
for $2$-bridge knots through different specializations by using ancestor triangles which is modified Yamada's ancestor triangles. 
\par 
This paper is organized as follows. 
In Section 2, after recalling the definition of Kauffman bracket polynomials of tangle diagrams, 
based on Conway's classification result of rational tangles, we explain the method of construction of the rational tangle diagram $T(\frac{p}{q})$ corresponding to an irreducible fraction $\frac{p}{q}$ via its continued fraction expression. 
We also explain the definition of Yamada's ancestor triangles, and 
give the proof of a reformulated version of Yamada's formula \cite{Yamada-Proceeding} for computing the Kauffman bracket of $T(\frac{p}{q})$ by ancestor triangles. 
\par 
In Section 3, Conway-Coxeter friezes of zigzag-type are discussed, and it is shown that for such a Conway-Coxeter frieze $\Gamma$ the Kauffman bracket polynomial $\langle \Gamma \rangle$ can be defined. 
We give a recursive formula for computing $\langle \Gamma \rangle$ by $LR$ words, and derive various properties of $\langle \Gamma \rangle$. 
\par 
In Section 4, a complete invariant of Conway-Coxeter friezes of zigzag-type is given by using the Kauffman bracket polynomial. 
\par 
In the final section we explain that the main results in Section 3 can be recognized in the viewpoint of  deleting ``trigonometric-curves" in a Conway-Coxeter frieze.

\section{Kauffman bracket polynomials corresponding to fractions}
\par 
There are many known formulae to compute Kauffman bracket polynomials of rational tangles and Jones polynomial of rational links \cite{Kanenobu, Nakabo1, Nakabo2, LLS, QY-AQ}. 
In 1995 Shuji Yamada also found a unique formula to compute such polynomials by using Farey neighbors up to units \cite{Yamada-Proceeding}.  
His formula is great useful to examine relationship with Conway-Coxeter friezes and Markov triples \cite{KW1}. 
In this section we explain his idea and introduce a modified formula with no ambiguity, which is given in Theorems~\ref{w1-8} and \ref{w1-10}. 
Therefore, one can explicitly compute the value of the Kauffman bracket of a given rational tangle diagram by Yamada's formula. 

\par \smallskip 
\subsection{Kauffman brackets of rational tangles} 
In this subsection, we review definitions and properties of rational tangles and the Kauffman bracket polynomials for them.  
For the basic theory of knots and links, we refer the reader to \cite{Adams, Cromwell} for example. 
\par 
First let us recall the recursive definition of the Kauffman bracket polynomial for link diagrams. 
Let $\Lambda $ be the Laurent polynomial ring $\mathbb{Z}[A, A^{-1}]$. 
For each link diagram $D$, the Kauffman bracket polynomial $\langle D\rangle \in \Lambda $ is computed by applying the following rules repeatedly. 
\begin{enumerate}\itemindent=1cm 
\item[(KB1)] $\langle \ \raisebox{-0.15cm}{\includegraphics[width=0.5cm]{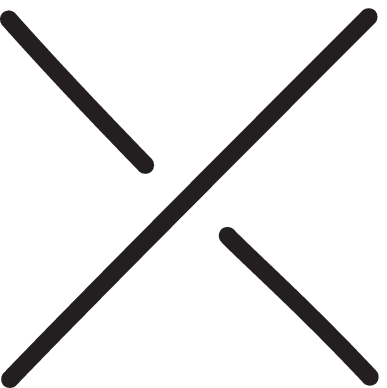}}\ \rangle 
=A\langle \ \raisebox{-0.15cm}{\includegraphics[width=0.5cm]{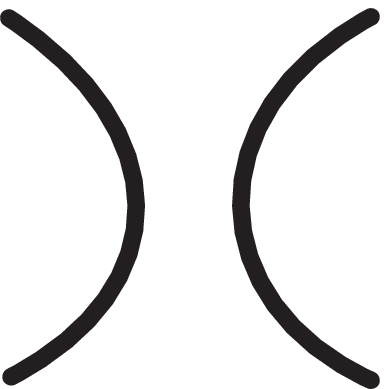}} \ \rangle 
+A^{-1}\langle \ \raisebox{-0.15cm}{\includegraphics[width=0.5cm]{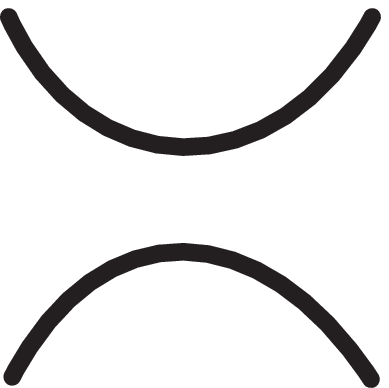}} \ \rangle $ 
\item[(KB2)] $\langle \ D\coprod \raisebox{-0.1cm}{\includegraphics[width=0.4cm]{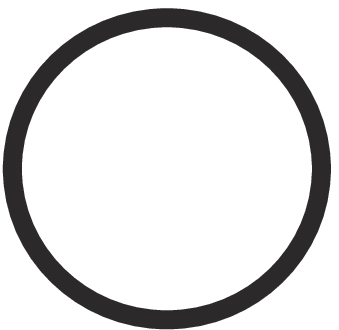}}\ \rangle =\delta \langle  D\rangle $,  where $\delta =-A^2-A^{-2}$.
\item[(KB3)] $\langle \ \raisebox{-0.1cm}{\includegraphics[width=0.4cm]{unoriented_circle.eps}}\ \rangle =1$. 
\end{enumerate}

\par 
Here, although three illustrations in (KB1) only express insides in a small ball,  
the equation (KB1) should be viewed as an equation between Kauffman brackets for three link diagrams such that they are different only in the small ball and identical on the exterior. 
In (KB2), $D\coprod \raisebox{-0.1cm}{\includegraphics[width=0.4cm]{unoriented_circle.eps}}$ expresses a link diagram such that $D$ and a simple closed curve are splittable by a planar isotopy. 
\par 
The Kauffman bracket $\langle D\rangle $ is a regular isotopy invariant of $D$, that is, it is invariant under Reidemeister moves II and III. 
On the other hand, under Reidemeister move I it behaves as 
$$\langle \ \raisebox{-0.15cm}{\includegraphics[height=0.5cm]{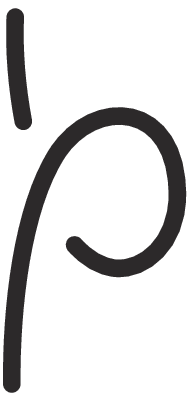}}\ \rangle 
=-A^3\langle \ \raisebox{-0.15cm}{\includegraphics[height=0.5cm]{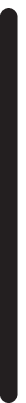}}\ \rangle 
,\qquad 
\langle \ \raisebox{-0.15cm}{\includegraphics[height=0.5cm]{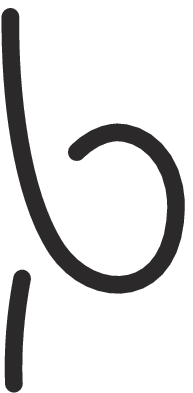}}\ \rangle 
=-A^{-3}\langle \ \raisebox{-0.15cm}{\includegraphics[height=0.5cm]{unoriented_straight_line.eps}}\ \rangle .$$
However, it is well-known by Kauffman \cite{Kau-Topology} that after taking a suitable normalization and changing the parameter 
we have an ambient isotopy invariant of oriented links, which coincides with Jones polynomial. 

\par 
The notion of a tangle (diagram) was introduced by Conway \cite{Conway} to make a list of knots and links in the order of fewer crossing numbers. 
By a tangle we mean a compact $1$-dimensional manifold properly embedded in the standard $3$-ball $B^3$ 
which intersects with the boundary of $B^3$ only at the specific four points $(0 , -\frac{1}{\sqrt{2}}, \frac{1}{\sqrt{2}}), 
(0 , -\frac{1}{\sqrt{2}}, -\frac{1}{\sqrt{2}}),  (0 , \frac{1}{\sqrt{2}}, \frac{1}{\sqrt{2}}), (0 , \frac{1}{\sqrt{2}}, -\frac{1}{\sqrt{2}})$.
Two tangles are called equivalent if one is deformed other by an isotopy of $B^3$ fixing the boundary at pointwise. 
As the same in the case of knots and links, any tangle can be represented by a tangle diagram, 
which obtained by projecting the tangle to the standard disk $D^2$ via the map $\pi : \mathbb{R}^3 \longrightarrow \mathbb{R}^2, \pi (x,y,z)=(y,z)$ as depicted in Figure~\ref{figw1}. 

\begin{figure}[htbp]
\centering 
\includegraphics[height=2cm]{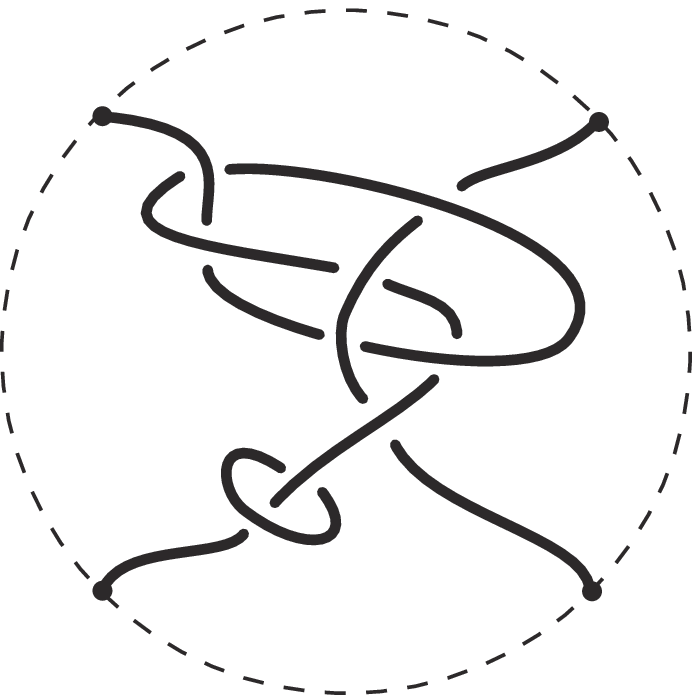}
\caption{}\label{figw1}
\end{figure}

\par 
The Kauffman bracket is also defined for a tangle diagram and can be computed by the same rules (KB1) and (KB2). 
However, in the tangle case, it takes a value in rank $2$ free $\Lambda$-module $\Lambda ^2:=\Lambda [0]+\Lambda [\infty ]$, 
where  $[0]$ and $[\infty ]$ are the two tangle diagrams depicted as in Figure~\ref{figw2}. 

\begin{figure}[htbp]
\centering
\includegraphics[width=2cm]{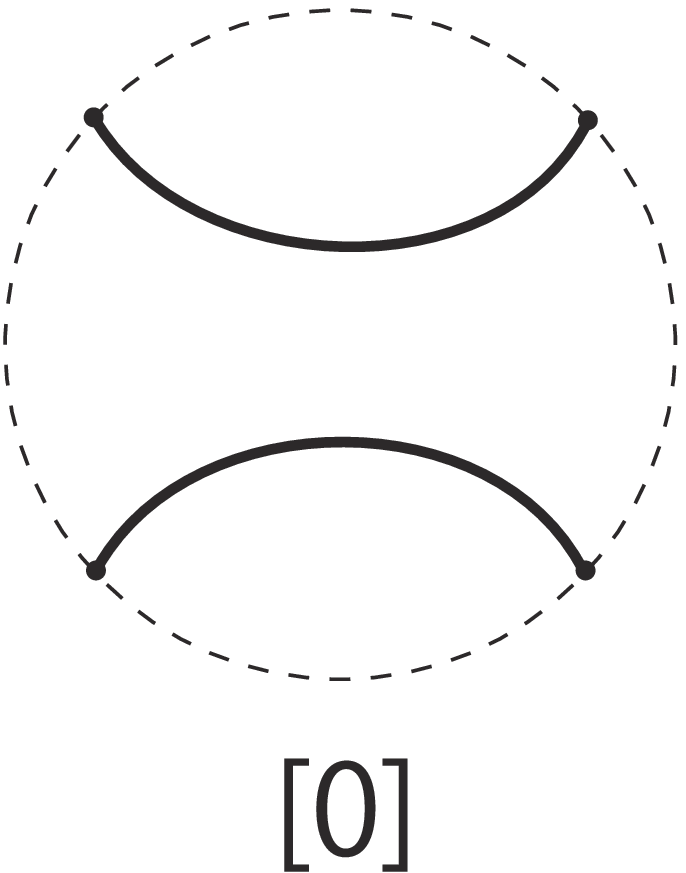}\hspace{2cm} 
\includegraphics[width=2cm]{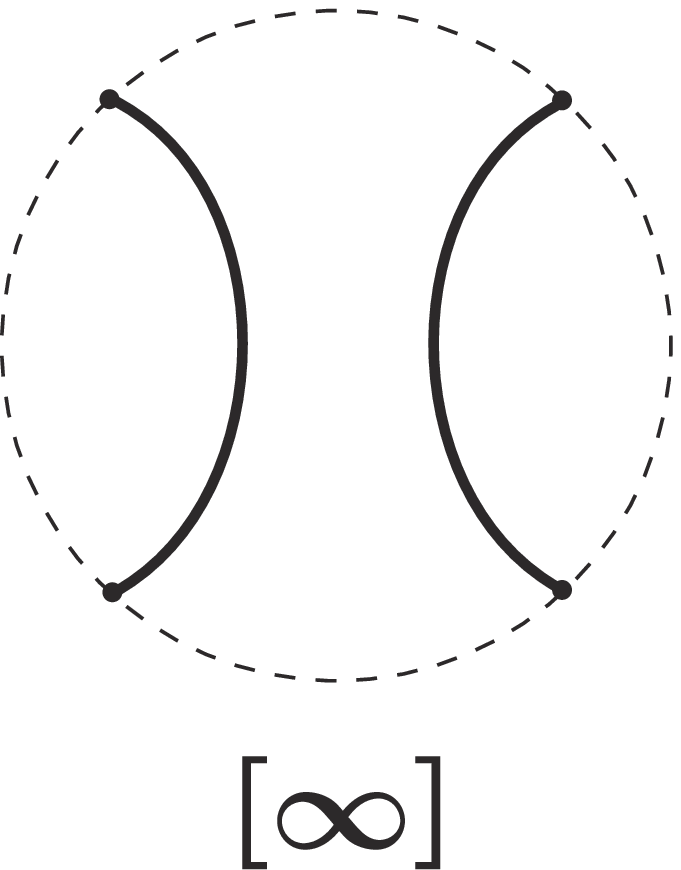}
\caption{}\label{figw2}
\end{figure}

\par 
For a tangle diagram $T$, we denote the Kauffman bracket polynomial by $\langle T\rangle $, and 
define $n_T, d_T\in \Lambda $ by 
\begin{equation}
\langle T\rangle =n_T[\infty ]+d_T[0], 
\end{equation}
which are also regular isotopy invariants. 

\par 
Let $T$ be a tangle diagram. 
By closing endpoints of $T$, two link diagrams $N(T)$ and $D(T)$ are obtained as in Figure~\ref{figw3}. 
$N(T)$ and $D(T)$ are called the numerator and the denominator of $T$, respectively. 

\begin{figure}[hbtp]
\centering
\includegraphics[height=5cm]{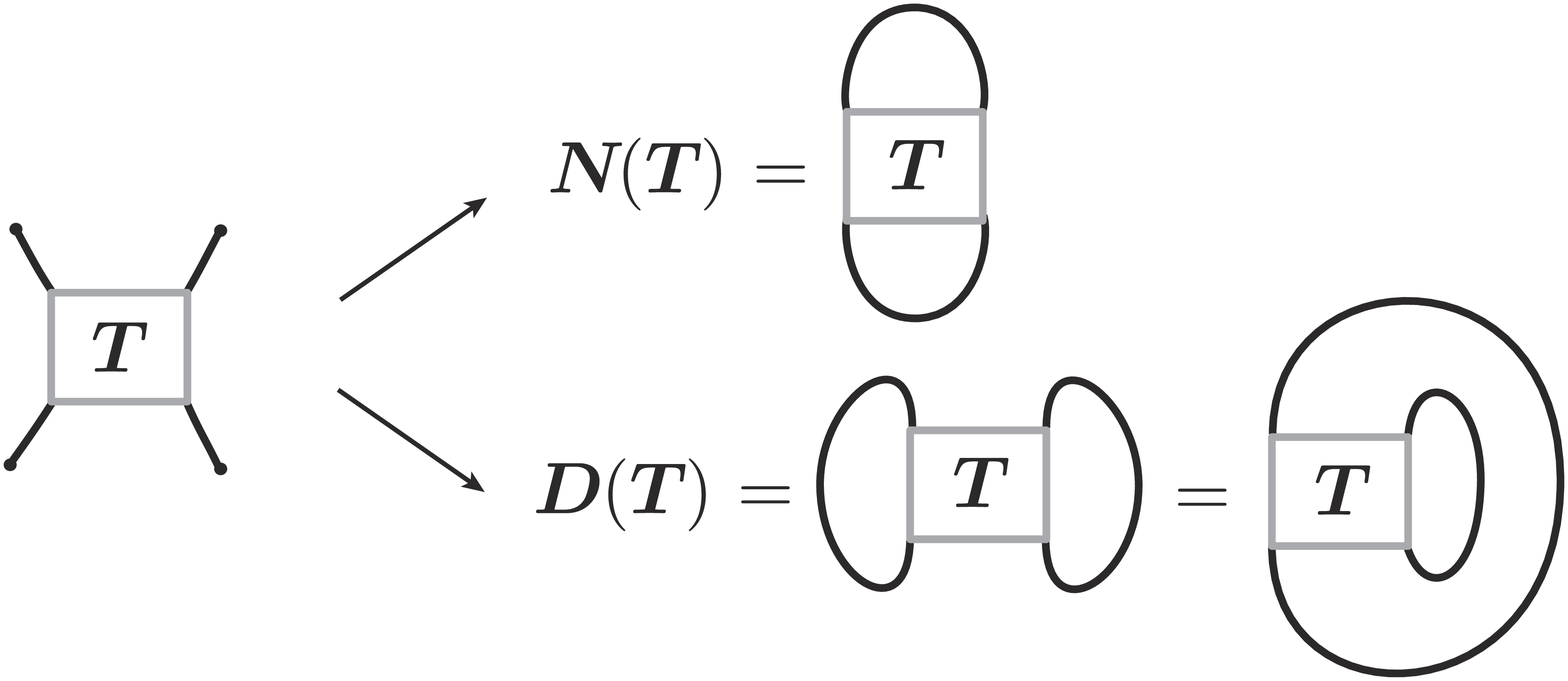}
\caption{}\label{figw3}
\end{figure}

The Kauffman bracket polynomials of $N(T)$ and $D(T)$ can be computed from that of $T$ as follows. 
\begin{align}
\langle N(T)\rangle &=n_T+d_T\delta , \label{eqw1-2}\\ 
\langle D(T)\rangle &=n_T\delta +d_T. \label{eqw1-3}
\end{align}

Let us consider the two maps $v:\Lambda ^2 \longrightarrow \Lambda $ and $(-)^{\text{rot}}: \Lambda ^2 \longrightarrow \Lambda ^2$ 
defined by
\begin{align}
v(a[\infty ]+b[0])&=a\delta +b, \label{eqw1-4}\\ 
(a[\infty ]+b[0])^{\textrm{rot}}&=b[\infty ]+a[0]
\end{align}
for $a, b\in \Lambda $. 
Then, the equations \eqref{eqw1-2} and \eqref{eqw1-3} can be rewritten as
\begin{align}
\langle N(T)\rangle &=v\bigl(\langle T\rangle ^{\textrm{rot}}\bigr) , \label{eqw1-6}\\ 
 \langle D(T)\rangle &=v\bigl(\langle T\rangle \bigr) . \label{eqw1-7}
\end{align}

\par \medskip 
There are three operations on tangle diagrams, which are called the mirror image, rotation and inversion denoted by $-T, T^{\textrm{rot}}, T^{\textrm{in}}$ for a tangle diagram $T$, respectively.
$-T$ and $T^{\textrm{rot}}$ are given by
\begin{align*}
-T&:=(\text{the tangle diagram obtained from $T$ by changing all crossings}),  \\ 
T^{\textrm{rot}}&:=(\text{the tangle diagram obtained from $T$ by rotating $90^{\circ}$ in } \\ 
& \hspace{0.8cm} \text{counterclockwise direction}), 
\end{align*}
\noindent 
and $T^{\textrm{in}}$ is given by the composition $(-T)^{\text{rot}}=-(T^{\text{rot}})$, that is pictorially represented as 

$$T^{\textrm{in}}\ =\ \ \raisebox{-1.2cm}{\includegraphics[height=2.5cm]{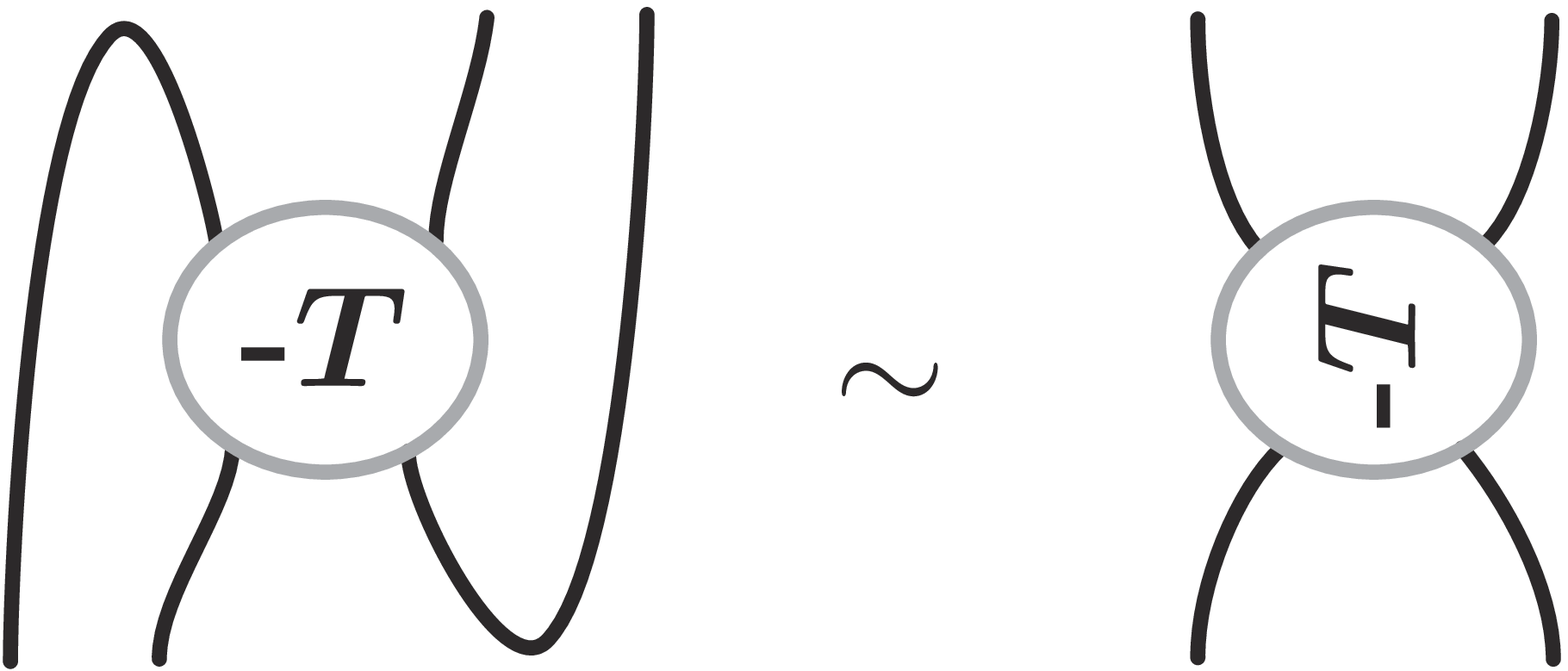}}.$$

Let 
$\overline{(-)}$ be the $\mathbb{Z}$-linear map  from  $\Lambda $ to $\Lambda $ defined by 
$$\overline{A}=A^{-1},\quad \overline{A^{-1}}=A,$$
and define a map ${(-)}^{\text{in}} : \Lambda ^2 \longrightarrow \Lambda ^2$ by 
\begin{equation}
(a[\infty ]+b[0])^{\textrm{in}}=\overline{b}[\infty ]+\overline{a}[0].
\end{equation}

\par \medskip 
\begin{lem}\label{w1-1}
For $\langle T\rangle =n_T[\infty ]+d_T[0]$, 
\begin{enumerate}
\item[$(1)$] $\langle -T\rangle =\overline{n_T}[\infty ]+\overline{d_T}[0]$.
\item[$(2)$] $\langle T^{\textrm{rot}}\rangle =d_T[\infty ]+n_T[0]\ (=\langle T\rangle ^{\textrm{rot}})$.
\item[$(3)$] $\langle T^{\textrm{in}}\rangle =\overline{d_T}[\infty ]+\overline{n_T}[0]\ (=\langle T\rangle ^{\textrm{in}})$. 
\end{enumerate}
\end{lem}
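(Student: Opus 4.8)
The plan is to prove (1) and (2) by induction on the number $n$ of crossings of the tangle diagram $T$, and then to obtain (3) by a purely formal manipulation. Two elementary facts will be used repeatedly: $\overline{\delta}=\delta$, since $\delta=-A^2-A^{-2}$ is invariant under $A\leftrightarrow A^{-1}$; and the conventions $\langle[0]\rangle=[0]$, $\langle[\infty]\rangle=[\infty]$ for the two crossingless basis tangle diagrams. I also take for granted that $\langle T\rangle$ is well defined, i.e.\ independent of the order in which crossings are smoothed.

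For the base case $n=0$: a crossingless tangle diagram is a disjoint union of two embedded arcs joining the four boundary points together with $k\ge 0$ additional disjoint circles; since two disjoint arcs in a disc cannot connect opposite boundary points, the arc pattern is, up to isotopy rel the boundary, that of $[0]$ or that of $[\infty]$, so by $k$ applications of (KB2) we get $\langle T\rangle=\delta^k[0]$ or $\delta^k[\infty]$. For such $T$ the mirror image $-T$ is the same diagram (there are no crossings to switch) and $\overline{\delta^k}=\delta^k$, which gives (1); the $90^{\circ}$ rotation interchanges the $[0]$ arc pattern with the $[\infty]$ arc pattern and sends circles to circles, which gives (2).

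For the inductive step I choose a crossing $c$ of $T$ and, by (KB1), write $\langle T\rangle=A\langle T_A\rangle+A^{-1}\langle T_B\rangle$, where $T_A$ (resp.\ $T_B$) is the smoothing of $c$ carrying the coefficient $A$ (resp.\ $A^{-1}$); each has $n-1$ crossings. For (1) the point is that in $-T$ the crossing sitting at $c$ is the opposite crossing, and Kauffman's rule assigns the coefficient $A$ to the smoothing that at $c$ carried $A^{-1}$; hence $(-T)_A=-(T_B)$ and $(-T)_B=-(T_A)$, and the induction hypothesis yields $\langle -T\rangle=A\,\overline{\langle T_B\rangle}+A^{-1}\,\overline{\langle T_A\rangle}=\overline{\,A\langle T_A\rangle+A^{-1}\langle T_B\rangle\,}=\overline{\langle T\rangle}$, which unwinds to the claimed formula. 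For (2) the point is that Kauffman's selection of the $A$-smoothing depends only on the over/under information and on the orientation of the plane, hence commutes with the $90^{\circ}$ rotation: $(T^{\mathrm{rot}})_A=(T_A)^{\mathrm{rot}}$ and $(T^{\mathrm{rot}})_B=(T_B)^{\mathrm{rot}}$. Since $(-)^{\mathrm{rot}}\colon\Lambda^2\to\Lambda^2$ is $\Lambda$-linear, (KB1) and the induction hypothesis give $\langle T^{\mathrm{rot}}\rangle=A\langle T_A\rangle^{\mathrm{rot}}+A^{-1}\langle T_B\rangle^{\mathrm{rot}}=\langle T\rangle^{\mathrm{rot}}$, i.e.\ $d_T[\infty]+n_T[0]$.

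Part (3) is then immediate: $T^{\mathrm{in}}=(-T)^{\mathrm{rot}}$ by definition, so by (2) applied to the diagram $-T$ and then (1), $\langle T^{\mathrm{in}}\rangle=\langle -T\rangle^{\mathrm{rot}}=\bigl(\overline{n_T}[\infty]+\overline{d_T}[0]\bigr)^{\mathrm{rot}}=\overline{d_T}[\infty]+\overline{n_T}[0]$, which is exactly $\langle T\rangle^{\mathrm{in}}$. I expect the only genuinely delicate part of the argument to be the bookkeeping in the inductive step: checking carefully, against the convention fixed in (KB1), that passing to the mirror image swaps the $A$- and $A^{-1}$-smoothings of each crossing while the $90^{\circ}$ rotation preserves them. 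Once that is pinned down, everything else is the routine induction above.
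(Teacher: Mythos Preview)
Your proof is correct. The paper itself states Lemma~\ref{w1-1} without proof, treating it as an elementary fact about the Kauffman bracket; your induction on the number of crossings is the standard argument one would supply, and each step (the base case via the planar classification of crossingless arc patterns, the swap of $A$- and $A^{-1}$-smoothings under mirroring, the preservation of the $A$-smoothing under an orientation-preserving rotation, and the formal derivation of (3) from (1) and (2)) is handled correctly.
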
 

\par \medskip 
For an integer $n$, we denote by $[n]$ and $\frac{1}{[n]}$ the tangle diagrams depicted in Figure~\ref{figw4}. 

\begin{figure}[htbp]
\centering
\includegraphics[height=7cm]{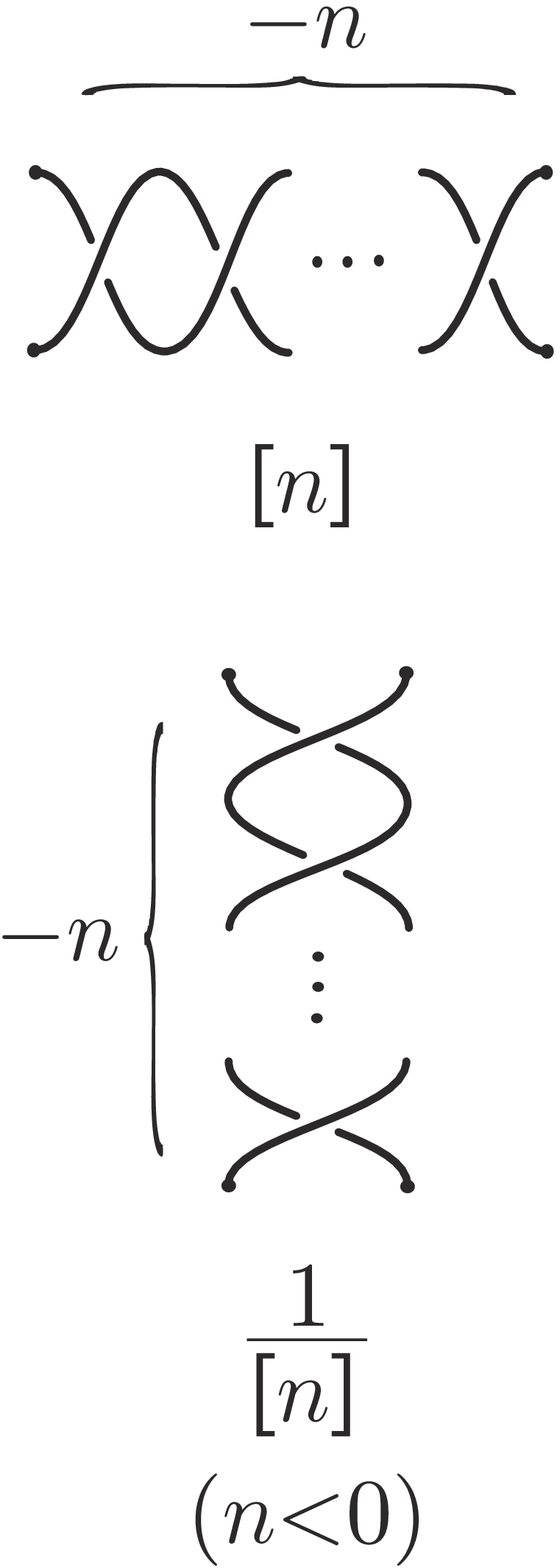}\hspace{1cm}   
 \includegraphics[height=7cm]{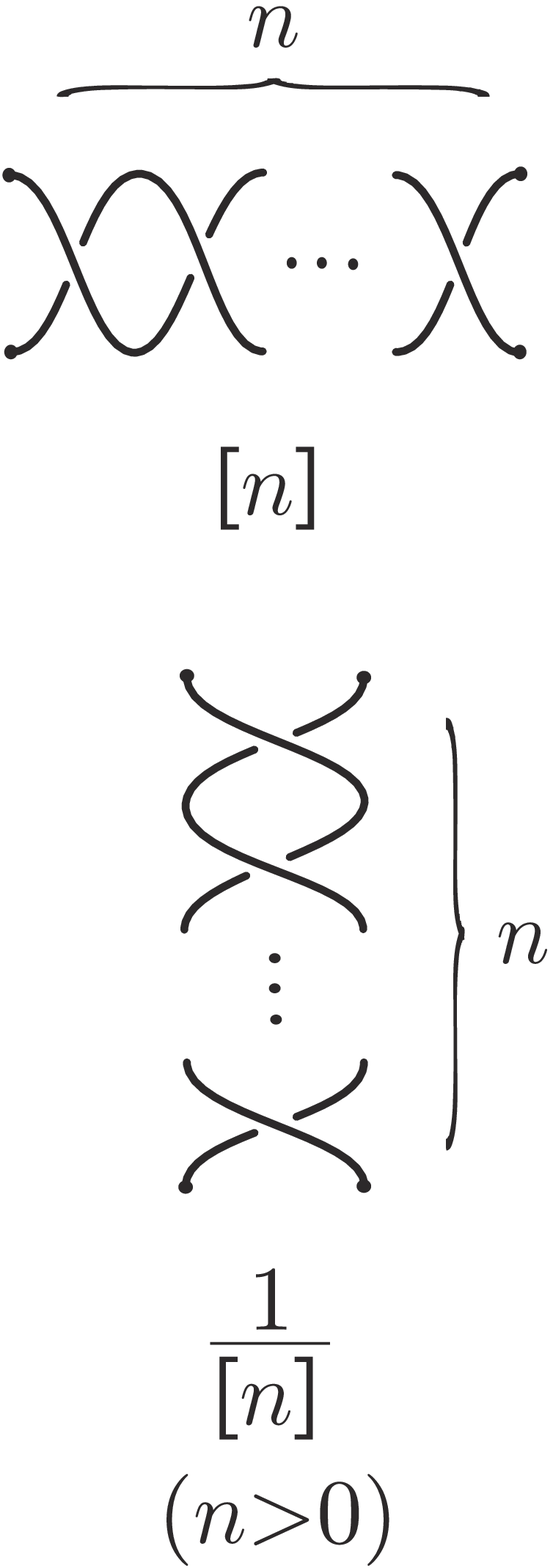}
\caption{}\label{figw4}
\end{figure}

The tangle diagrams $[n]$ and $\frac{1}{[n]}$ are called an integer tangle and a vertical tangle, respectively. 
It may be regarded as $\frac{1}{[0]}=[\infty ],\ \frac{1}{[\infty ]}=[0]$. 

\par \smallskip 
For tangle diagrams  $T, U$ we have tangle diagrams $T\bowtie U$ and $T\ast U$ defined by 
$$T\bowtie U\ :=\  \raisebox{-1cm}{\includegraphics[height=2cm]{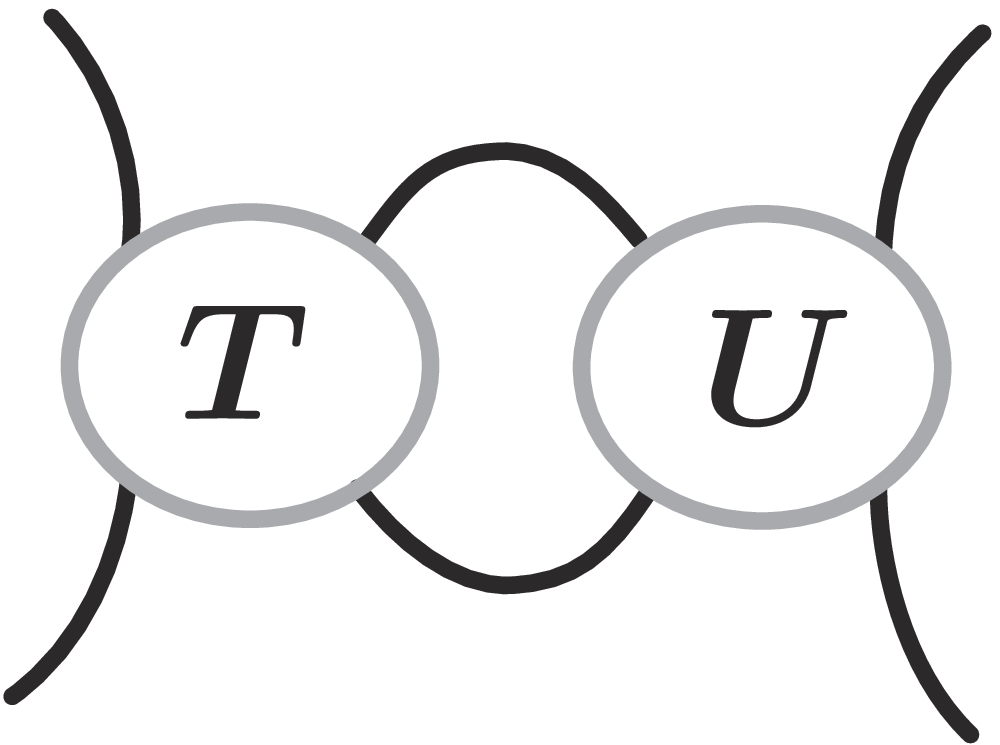}} \hspace{0.3cm} ,\qquad 
T\ast U\ :=\  \raisebox{-1.3cm}{\includegraphics[width=1.8cm]{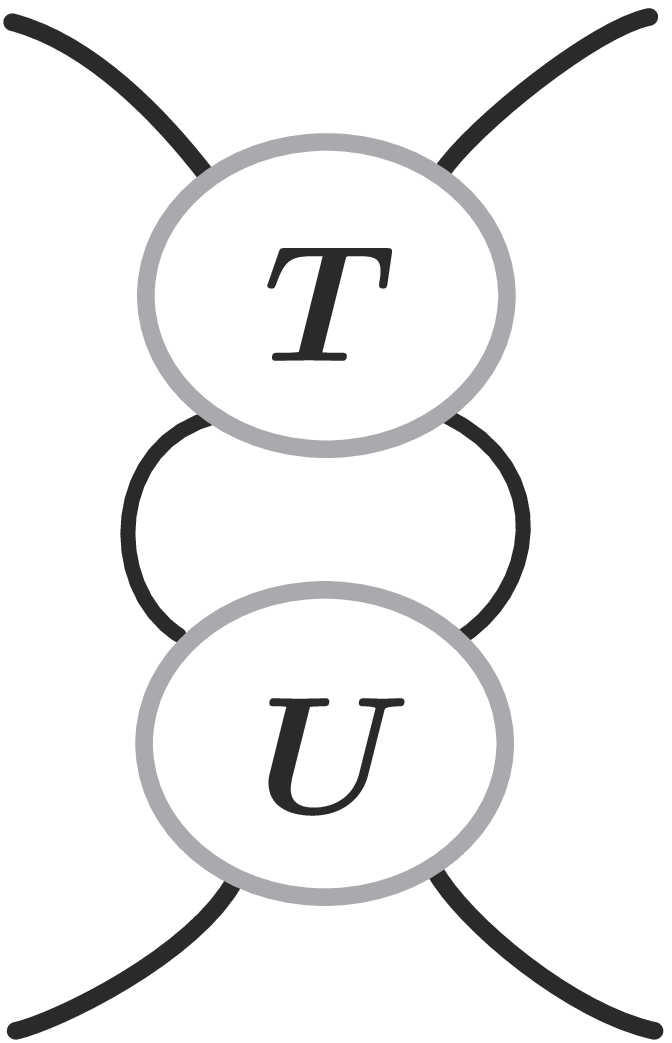}}\ \ .
$$

The tangle diagrams $T\bowtie U$ and $T\ast U$ are called the sum and the product of $T, U$. 
By definition the following equations hold. 

\begin{equation}\label{eqw1-9}
\begin{aligned}
d_{T\bowtie U}&=d_Td_U, &
n_{T\bowtie U}&=n_Td_U+d_Tn_U+n_Tn_U\delta ,\\ 
d_{T\ast U}&=d_Tn_U+n_Td_U+d_Td_U\delta , & 
n_{T\ast U}&=n_Tn_U. 
\end{aligned}
\end{equation}

\par \medskip 
\begin{lem}\label{w1-2}
For a positive integer $n$, 
\begin{enumerate}
\item[$(1)$] $\langle [n] \rangle =A^{n-2}\sum_{k=0}^{n-1}(-A^{-4})^k\ [\infty ] + A^n\ [0]$ \ $\Bigl(= A^{n-2}\frac{1-(-A^{-4})^n}{1+A^{-4}}\ [\infty ] + A^n\ [0] \Bigr)$.
\item[$(2)$] $\langle \frac{1}{[n]} \rangle = A^{-n}\ [\infty ] + A^{-n+2}\sum_{k=0}^{n-1}(-A^{4})^k\ [0]$ \ $\Bigl(=A^{-n}\ [\infty ] + A^{-n+2}\frac{1-(-A^{4})^n}{1+A^4}\ [0] \Bigr)$.
\item[$(3)$] $v\bigl( \langle [n] \rangle \bigr) =(-A^{-3})^n$.  
\end{enumerate}
\end{lem}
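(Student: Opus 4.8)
The plan is to prove parts (1) and (2) by induction on $n$, using the recursion \eqref{eqw1-9} for the Kauffman bracket of a sum and of a product of tangle diagrams, and then to deduce (3) from (1) by applying the map $v$ of \eqref{eqw1-4}. For the base case $n=1$, reading Figure~\ref{figw4} together with the skein relation (KB1) shows that the $A$-smoothing of the unique crossing of $[1]$ is the $[0]$-diagram and its $A^{-1}$-smoothing is the $[\infty]$-diagram, so $\langle [1]\rangle = A^{-1}[\infty] + A[0]$; by the same token $\langle \frac{1}{[1]}\rangle = A^{-1}[\infty] + A[0]$. Hence $n_{[1]}=n_{\frac{1}{[1]}}=A^{-1}$ and $d_{[1]}=d_{\frac{1}{[1]}}=A$, which is exactly what the formulas in (1) and (2) give at $n=1$.

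For the inductive step of (1), write $[n+1]=[n]\bowtie[1]$ and apply the first line of \eqref{eqw1-9}:
\[
d_{[n+1]}=d_{[n]}d_{[1]}=A\,d_{[n]},\qquad
n_{[n+1]}=n_{[n]}d_{[1]}+d_{[n]}n_{[1]}+n_{[n]}n_{[1]}\delta=(A+A^{-1}\delta)\,n_{[n]}+A^{-1}d_{[n]}.
\]
The key point is the scalar identity $A+A^{-1}\delta=-A^{-3}$, immediate from $\delta=-A^2-A^{-2}$. From $d_{[1]}=A$ one gets $d_{[n]}=A^{n}$ at once, and then $n_{[n+1]}=-A^{-3}n_{[n]}+A^{n-1}$; a one-line check (split off the $k=0$ term and shift the summation index) shows that $A^{n-2}\sum_{k=0}^{n-1}(-A^{-4})^k$ obeys this recursion and has the right value at $n=1$, proving (1). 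The bracketed closed form is just $\sum_{k=0}^{n-1}x^{k}=\tfrac{1-x^{n}}{1-x}$ at $x=-A^{-4}$.

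Part (2) goes through verbatim with the \emph{product} in place of the sum: from $\frac{1}{[n+1]}=\frac{1}{[n]}\ast\frac{1}{[1]}$ and the second line of \eqref{eqw1-9} one finds $n_{\frac{1}{[n+1]}}=A^{-1}n_{\frac{1}{[n]}}$ and $d_{\frac{1}{[n+1]}}=(A^{-1}+A\delta)\,d_{\frac{1}{[n]}}+A^{-n+1}$, with the companion identity $A^{-1}+A\delta=-A^{3}$, and the same index-shift argument closes the induction. (Alternatively, one may observe from Figure~\ref{figw4} that $\frac{1}{[n]}=[n]^{\mathrm{in}}$ and obtain (2) directly from (1) via Lemma~\ref{w1-1}(3), since $\overline{(-A^{-4})^{k}}=(-A^{4})^{k}$.) Finally, (3) is a short computation: by \eqref{eqw1-4},
\[
v\bigl(\langle [n]\rangle\bigr)=n_{[n]}\delta+d_{[n]}=-A^{n}+(-1)^{n}A^{-3n}+A^{n}=(-A^{-3})^{n},
\]
where one uses $\delta=-A^{-2}(1+A^{4})$ and the closed form of $n_{[n]}$ from (1); alternatively $v(\langle[n]\rangle)=\langle D([n])\rangle$ by \eqref{eqw1-7}, and $D([n])$ is a diagram of the unknot carrying $n$ curls, whose bracket is $(-A^{-3})^{n}$ by the Reidemeister~I behaviour recalled earlier. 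I do not anticipate a genuine obstacle: the whole argument rests on the two scalar identities $A+A^{-1}\delta=-A^{-3}$ and $A^{-1}+A\delta=-A^{3}$ and a finite geometric sum. The only places needing care are fixing the crossing/skein-sign convention in the base case so that $\langle[1]\rangle=A^{-1}[\infty]+A[0]$ (and not its mirror), and, for the shortcut proof of (2), checking that $\frac{1}{[n]}$ really is the inversion $[n]^{\mathrm{in}}$ as a tangle diagram.
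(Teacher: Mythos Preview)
Your proof is correct. The paper states Lemma~\ref{w1-2} without proof (it is treated as a routine computation), so there is no argument to compare against; your induction via \eqref{eqw1-9} together with the scalar identities $A+A^{-1}\delta=-A^{-3}$ and $A^{-1}+A\delta=-A^{3}$, and the direct evaluation of $v$ for part~(3), is the natural argument and fills the gap cleanly. The two points you flag as needing care are both consistent with the paper's conventions: the base case $\langle[1]\rangle=A^{-1}[\infty]+A[0]$ is forced by the statement of (1) itself at $n=1$, and the identification $\frac{1}{[n]}=[n]^{\mathrm{in}}$ is the standard one underlying Figure~\ref{figw4}, so your alternative derivation of (2) from (1) via Lemma~\ref{w1-1}(3) is also valid.
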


\par \medskip 
By \eqref{eqw1-9} and Lemma~\ref{w1-2} we have: 

\par \medskip 
\begin{lem}\label{w1-3}
Let $T$ be a tangle diagram, and $n$ be a positive integer. Then 
\begin{align*}
\Bigl\langle T \bowtie \frac{1}{[n]} \Bigr\rangle 
&= (A^{-n+2}\sum\limits_{k=0}^{n-1}(-A^4)^kn_T+\delta A^{-n}n_T+A^{-n}d_T)[\infty ]+A^{-n+2}\sum\limits_{k=0}^{n-1}(-A^4)^kd_T[0] \\ 
&=A^{-n}v\bigl( \langle T\rangle \bigr) [\infty ]+d\Bigl(\frac{1}{n}\Bigr) \langle T\rangle , 
\end{align*}
where $d(\frac{1}{n}):=d_{\frac{1}{[n]}}$.  \qed 
\end{lem}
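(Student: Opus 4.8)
The plan is to combine the two defining equations in \eqref{eqw1-9} for the sum $T\bowtie U$ with the explicit value of $\langle \frac{1}{[n]}\rangle$ given in Lemma~\ref{w1-2}(2), and then reorganize the resulting expression so that the coefficient of $\langle T\rangle$ is visibly $d(\frac{1}{n})$. First I would set $U=\frac{1}{[n]}$, so that by Lemma~\ref{w1-2}(2) we have $n_U=A^{-n}$ and $d_U=A^{-n+2}\sum_{k=0}^{n-1}(-A^4)^k = d(\frac{1}{n})$. Substituting into the formulas $d_{T\bowtie U}=d_Td_U$ and $n_{T\bowtie U}=n_Td_U+d_Tn_U+n_Tn_U\delta$ immediately gives
\[
d_{T\bowtie \frac{1}{[n]}} = d(\tfrac{1}{n})\, d_T, \qquad
n_{T\bowtie \frac{1}{[n]}} = d(\tfrac{1}{n})\, n_T + A^{-n} d_T + A^{-n}\delta\, n_T,
\]
which, read off as coefficients of $[\infty]$ and $[0]$, is exactly the first displayed line of the lemma once one writes $d(\frac{1}{n})$ back as $A^{-n+2}\sum_{k=0}^{n-1}(-A^4)^k$.

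For the second displayed line, the key observation is that the $[\infty]$-coefficient can be split as
\[
d(\tfrac{1}{n})\,n_T + A^{-n}\bigl(\delta\, n_T + d_T\bigr),
\]
where the parenthesized term $\delta\, n_T + d_T$ is precisely $v(\langle T\rangle)$ by the definition \eqref{eqw1-4} of $v$ applied to $\langle T\rangle = n_T[\infty]+d_T[0]$. Meanwhile the $[0]$-coefficient is $d(\frac{1}{n})\,d_T$, so the remaining terms $d(\frac{1}{n})\,n_T[\infty] + d(\frac{1}{n})\,d_T[0]$ assemble into $d(\frac{1}{n})\langle T\rangle$. Adding the leftover piece $A^{-n}v(\langle T\rangle)[\infty]$ yields the stated closed form.

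This is essentially a bookkeeping argument, so there is no serious obstacle; the only point requiring a little care is recognizing the combination $\delta\, n_T + d_T$ as $v(\langle T\rangle)$ rather than carrying it as an opaque polynomial, and keeping the two descriptions of $d(\frac{1}{n})$ (as a geometric sum and as $d_{\frac{1}{[n]}}$) consistent throughout. I would present the computation in two short displayed equations — one for each line of the statement — and invoke \eqref{eqw1-9}, Lemma~\ref{w1-2}(2), and \eqref{eqw1-4} at the appropriate spots.
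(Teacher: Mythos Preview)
Your proposal is correct and follows exactly the route the paper indicates: the paper simply states that the lemma follows ``by \eqref{eqw1-9} and Lemma~\ref{w1-2}'' and marks it with a \qed, and you have spelled out precisely that substitution (setting $U=\frac{1}{[n]}$, reading off $n_U$ and $d_U$ from Lemma~\ref{w1-2}(2), and then recognizing $\delta\, n_T+d_T=v(\langle T\rangle)$ via \eqref{eqw1-4}).
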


\par \medskip 
Conway \cite{Conway} also introduced the notion of a rational tangle, 
and showed that there is a one-to-one correspondence between the set of rational tangles up to isotopy and $\mathbb{Q}\cup \{ \infty \}$.
The correspondence is given through continued fractions as follows. 
\par 
Let $\frac{p}{q}$ be an irreducible fraction, and 
expand it as a continued fraction  
\begin{equation}\label{eqw1-10}
\begin{aligned}
\frac{p}{q}=a_0+\dfrac{1}{\vbox to 18pt{ }a_1+\dfrac{1}{\vbox to 18pt{ }a_2+\dfrac{1}{\vbox to 18pt{ }\ddots +\dfrac{1}{\vbox to 18pt{ }a_{n-1}+\dfrac{1}{a_n}}}}}, 
\end{aligned}
\end{equation}

\noindent 
where $a_0\in \mathbb{Z},\ a_1, a_2, \ldots , a_n\in \mathbb{N}$. 
We denote the right-hand side of \eqref{eqw1-10} by $[a_0; a_1, a_2, \ldots , a_n]$. 
Note that the expansion \eqref{eqw1-10} is unique if the parity of $n$ is specified. 
We choose $n$ as an even integer, and define a tangle diagram $T(\frac{p}{q})$ by \vspace{-0.2cm} 
\begin{equation}\label{eqw1-11}
T\Bigl( \frac{p}{q}\Bigr) :=\ \raisebox{-1.5cm}{\includegraphics[height=3cm]{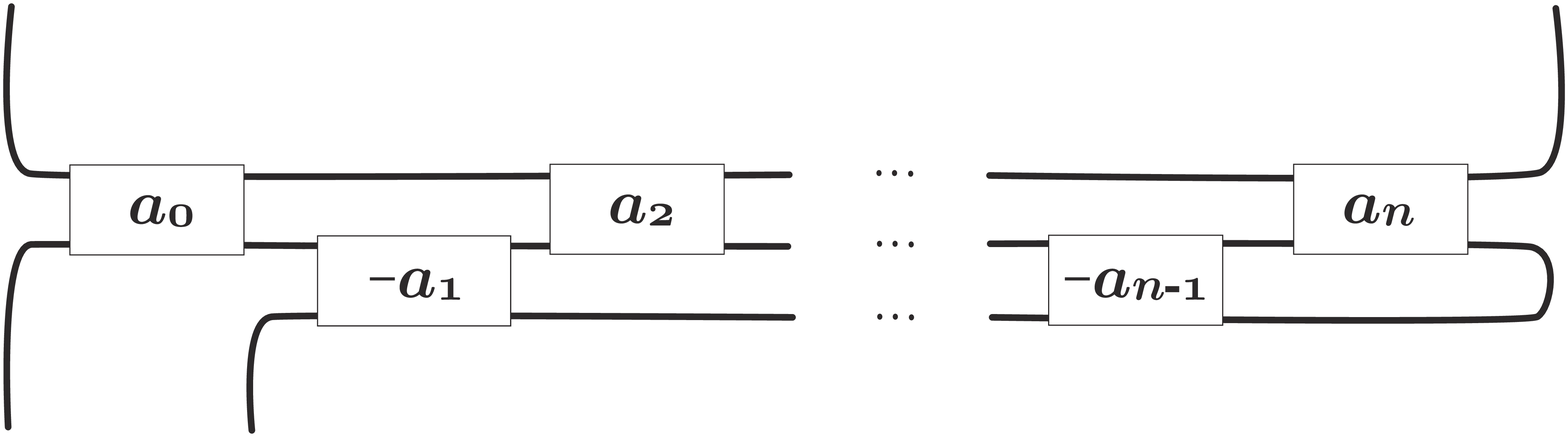}}, 
\end{equation}

\noindent 
where \  $\raisebox{-0.15cm}{\includegraphics[height=0.5cm]{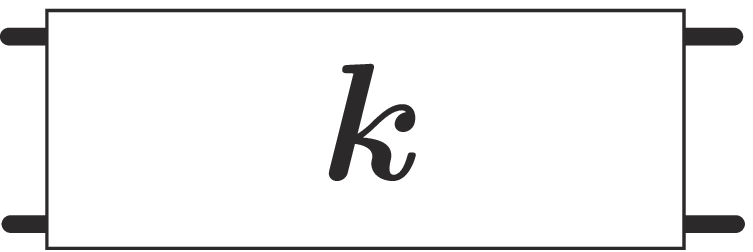}} \ =\ [k]$ for each $k\in \mathbb{Z}$. 

\par \bigskip 
Following Kauffman and Lambropoulou \cite{KL3} we denote 
the tangle diagram depicted in the right-hand side of \eqref{eqw1-11} by $[[a_0], [a_1], [a_2], \ldots , [a_n]]$. 
A tangle diagram which is regular isotopic to such a diagram is called a rational tangle diagram. 
For example, integer and vertical tangle diagrams are rational. 
By Conway's classification result \cite{Conway} of rational tangles, 
the isotopy class of $T( \frac{p}{q})$ is uniquely determined by the irreducible fraction $\frac{p}{q}$ (see also \cite[Chapter 8]{Cromwell}, \cite{BS}, \cite{GK2} and \cite{KL3}). 

\par \medskip 
\begin{exam}\label{w1-4} 
Let $m, n$ be positive integers. 
\begin{enumerate}
\item[$(1)$] By Lemma~\ref{w1-2}(1) and (2)
\begin{align*}
\Bigl\langle T\Bigl( \frac{1}{n}\Bigr)\Bigr\rangle \ 
&=\Bigl\langle\frac{1}{[n]}\Bigr\rangle \ 
=A^{-n}\Bigl\{ [\infty ]+A^{2}\sum\limits_{k=0}^{n-1}(-A^4)^k[0] \Bigr\} , \\ 
\Bigl\langle T\Bigl( \dfrac{n}{1}\Bigr)\Bigr\rangle \ 
&=\langle [n]\rangle \ 
=A^n\Bigr\{ A^{-2}\sum\limits_{k=0}^{n-1}(-A^{-4})^{k}[\infty ]+[0]\Bigr\}  . 
\end{align*}
\item[$(2)$] By Lemmas~\ref{w1-2}(3) and \ref{w1-3} 
$$
\Bigl\langle T\Bigl( m+\frac{1}{n}\Bigr)\Bigr\rangle \ 
\ =\ \Bigl\langle [m]\bowtie \frac{1}{[n]} \Bigr\rangle \ =A^{-n}(-A^{-3})^m[\infty ]+d\Bigl( \frac{1}{n}\Bigr)\langle [m]\rangle . 
$$
\end{enumerate}
\end{exam} 

\par \smallskip 
\subsection{Farey sums of fractions and the Stern-Brocot tree}\label{subsection1-3}
\par 
From now on, an irreducible fraction $\frac{p}{q}$ is always assumed to be $q\geq 0$, and if $q=0$, then $p=1$. 
\par 
Two irreducible fractions $\frac{p}{q}$ and $\frac{r}{s}$ are said to be \textit{Farey neighbors} if 
they satisfy $|ps-qr|=1$. 
Then the fraction
\begin{equation}\label{eq_Fareysum}
\frac{p}{q}\sharp \frac{r}{s}:=\frac{p+r}{q+s}
\end{equation}
is irreducible, and both $\frac{p}{q}, \frac{p}{q}\sharp \frac{r}{s}$ and $\frac{p}{q}\sharp \frac{r}{s}, \frac{r}{s}$ are Farey neighbors, again. 
The fraction \eqref{eq_Fareysum} is called the \textit{Farey sum} of $\frac{p}{q}$ and $\frac{r}{s}$. 
In this paper Farey neighbors $\frac{p}{q}$ and $\frac{r}{s}$ are always assumed to be arranged in ascending order, that is, $qr-ps=1$. 

\par \medskip 
\begin{lem}\label{w1-5}
\begin{enumerate}
\item[$(1)$] Let $\frac{p}{q}$ and $\frac{r}{s}$ be Farey neighbors that are not $\infty$. 
Then, 
$\frac{p}{q}\sharp \frac{r}{s}$ is the unique fraction that the absolute values of the numerator and the denominator are minimum between the numerators and the denominators of the irreducible fractions in the open interval $(\frac{p}{q}, \frac{r}{s})$, respectively. 
\item[$(2)$] Non-negative rational numbers are generated by operating $\sharp$ from $\frac{0}{1}$ and $\frac{1}{0}$. 
\item[$(3)$] For any nonzero rational number $\alpha $, there is a unique pair $(\frac{p}{q}, \frac{r}{s})$ of Farey neighbors which satisfies $\alpha =\frac{p}{q}\sharp \frac{r}{s}$. 
The pair $(\frac{p}{q}, \frac{r}{s})$ is called the parents of $\alpha $, and 
$\alpha $ is called the mediant of $(\frac{p}{q}, \frac{r}{s})$. 
\end{enumerate}
\end{lem}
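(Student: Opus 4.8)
The plan is to prove part (1) first, since it is the geometric core, then deduce part (3) from a short B\'ezout-type computation, and finally obtain part (2) by induction on the denominator. \emph{For (1),} write the pair as $\frac pq<\frac rs$ with $qr-ps=1$ and $q,s\ge 1$. The first step is to note that no integer lies in the open interval $(\frac pq,\frac rs)$: if $\frac pq<n<\frac rs$ then $nq-p\ge 1$ and $r-ns\ge 1$, so $s(nq-p)+q(r-ns)=qr-ps=1$, which is impossible since the left-hand side is $\ge s+q\ge 2$. Hence $p$ and $r$ have the same sign, every irreducible $\frac ab$ with $b\ge 1$ in the interval has numerator of that sign, and $|p+r|=|p|+|r|$. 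The second step is the substitution $a'=qa-pb$, $b'=rb-sa$: since $\frac pq<\frac ab<\frac rs$ both $a',b'$ are $\ge 1$, and a direct computation using $qr-ps=1$ gives $a=ra'+pb'$ and $b=sa'+qb'$. Therefore $b=sa'+qb'\ge s+q$, with equality exactly when $a'=b'=1$, i.e.\ when $\frac ab=\frac{p+r}{q+s}$; and $|a|=|r|a'+|p|b'\ge|r|+|p|=|p+r|$. So the mediant attains the minimal absolute value both of the denominator and of the numerator among irreducible fractions in the interval, and it is the unique fraction attaining the former; this is the asserted characterization.

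\emph{For (3),} let $\alpha=\frac ab$ in lowest terms with $b\ge 1$ and $a\ne 0$. If $\alpha=\frac pq\sharp\frac rs$ for ascending Farey neighbors, then since $\frac{p+r}{q+s}$ is already irreducible with positive denominator we must have $p+r=a$ and $q+s=b$; substituting $r=a-p$ and $s=b-q$ turns the Farey condition $qr-ps=1$ into the single linear equation $qa-pb=1$. Conversely, any integer solution of $qa-pb=1$ normalized so that $0\le q\le b$ (with the sign convention respected at the endpoints) gives back such a pair, and then $\frac pq<\frac rs$ follows from $qr-ps=1>0$. Since $\gcd(a,b)=1$, the solutions of $qa-pb=1$ form a single family $(p_0+ka,\,q_0+kb)$, so exactly one has $q$ in any window of $b$ consecutive integers; a short case-check --- the values $q=0$ and $q=b$ can occur only when $b=1$, in which case one takes $q=b=1$ and obtains the parents $(\frac{a-1}{1},\frac10)$ --- shows there is exactly one admissible pair, and the argument does not depend on the sign of $a$. (Existence for $\alpha>0$ also follows from part (2).)

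\emph{For (2),} I would induct on the denominator $q$ of a non-negative irreducible fraction $\frac pq$. If $q\le 1$ the fraction is $\frac10$ or a non-negative integer $\frac p1=\frac{p-1}{1}\sharp\frac10$, handled by a trivial sub-induction on $p$ with $\frac01$ and $\frac10$ as base cases. If $q\ge 2$ then $\frac pq$ is not an integer, so by part (3) it has parents $(\frac{p'}{q'},\frac{r'}{s'})$; neither parent can be $\infty=\frac10$ (that would force $q=q'$, hence $q=1$), so both denominators are $\ge 1$ and $q'+s'=q$, giving $q',s'<q$; the inductive hypothesis then applies to both parents and $\frac pq=\frac{p'}{q'}\sharp\frac{r'}{s'}$ is generated. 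The step I expect to be most delicate is the sign and normalization bookkeeping in (1) and (3) --- keeping the identities $a=ra'+pb'$, $b=sa'+qb'$ and the equation $qa-pb=1$ consistent with the rule that a denominator is $\ge 0$ and vanishes only for $\frac10$, especially near the fractions $\frac01$ and $\frac10$ where a numerator or a denominator degenerates; everything else is routine integer linear algebra.
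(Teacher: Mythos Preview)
The paper does not actually prove this lemma; immediately after the statement it writes ``For a proof of the above lemma see \cite[Theorem~3.9]{Aigner} or \cite[Lemma~3.5]{KW1}.''  So there is no in-paper argument to compare against, and your self-contained treatment is more than the paper itself offers.

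Your proof is correct in substance.  The one place that needs an extra line is the inductive step for~(2): you verify that for $\frac pq$ with $q\ge 2$ the two parents have denominators in $\{1,\dots,q-1\}$, but you apply the inductive hypothesis without checking that both parents are \emph{non-negative}.  This is easy once written out.  With $p'+r'=p$, $q'+s'=q$ and $q'r'-p's'=1$, substitution gives $q'p-p'q=1$; since $q\ge 2$ and $\frac pq$ is irreducible and non-negative we have $p\ge 1$, so $p'q=q'p-1\ge 0$ and hence $p'\ge 0$, while $q'\le q-1$ yields $p'q\le (q-1)p-1<qp$, so $p'<p$ and $r'=p-p'\ge 1$.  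Adding this sentence closes the induction.

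A minor expository remark on~(1): as you implicitly note, the mediant is the \emph{unique} minimizer of the denominator but in general only \emph{a} minimizer of $|a|$ (e.g.\ between $\tfrac01$ and $\tfrac11$ every $\tfrac1n$ ties the numerator).  Your reading of the lemma---unique denominator-minimizer which also attains the numerator minimum---is the correct interpretation, and your derivation via $a=ra'+pb'$, $b=sa'+qb'$ is the standard clean way to see it.
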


For a proof of the above lemma see \cite[Theorem 3.9]{Aigner} or \cite[Lemma 3.5]{KW1}. 
\par 
For study on (positive) rational numbers the Stern-Brocot tree \cite{Brocot, Stern} is useful.  
This tree is an infinite binary tree in which the vertices correspond to the positive rational numbers, and the root of the tree is the rational number $\frac{1}{1}$ depicted as in Figure~\ref{figw5}. 
Two rational numbers are combined by an edge if they have a relation as the nearest parent and its mediant. 
By Lemma~\ref{w1-5} (3), all positive rational numbers appear in the Stern-Brocot tree as vertices. 

\begin{figure}[htbp]
\centering 
\includegraphics[width=10cm]{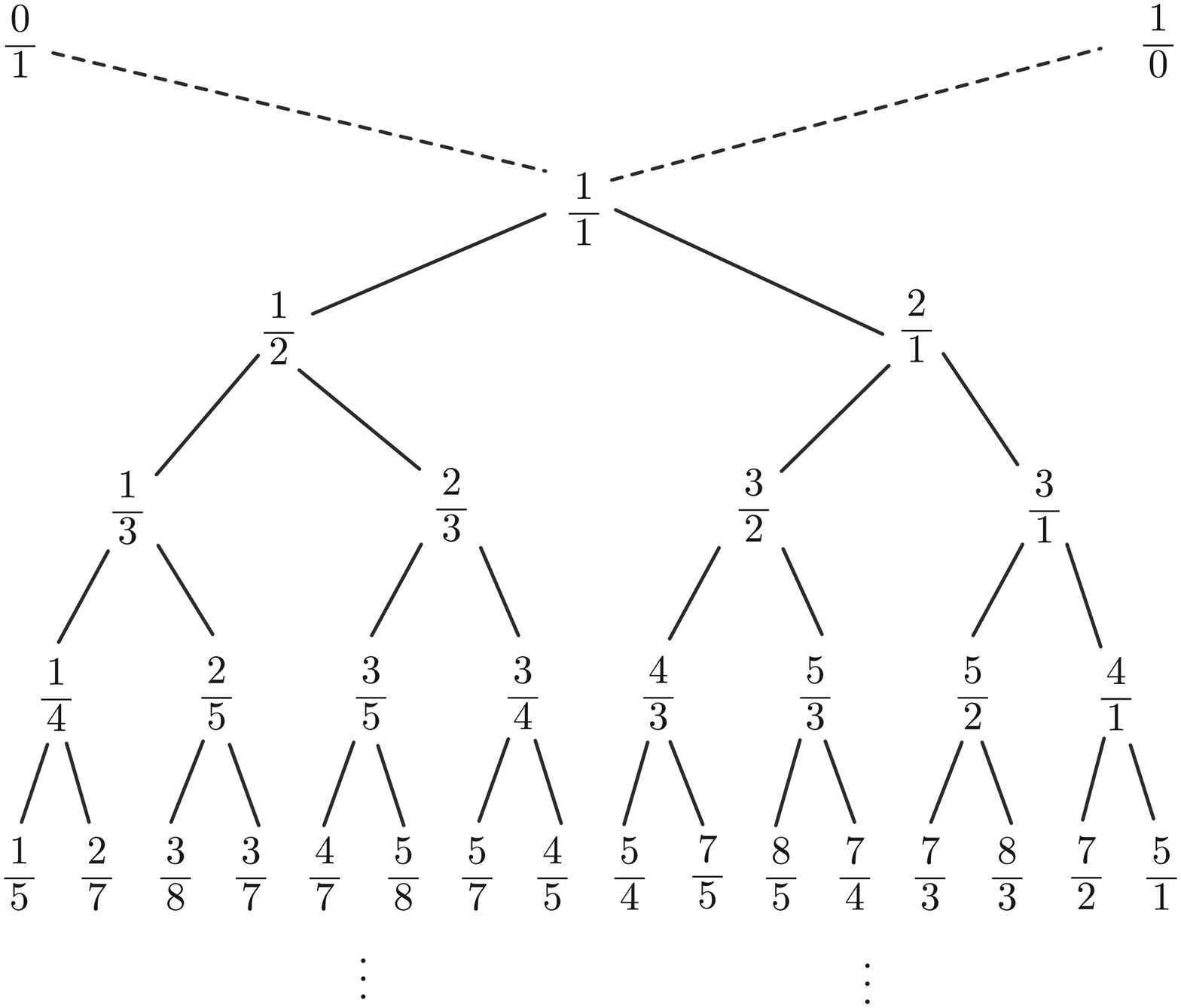}
\caption{}\label{figw5}
\end{figure}

\par 
The Kauffman bracket polynomial of the rational tangle diagram $T(\alpha )$ corresponding to a rational number $\alpha \in \mathbb{Q}\cap (0,1)$ can be computed by using the extended graph of the Stern-Brocot tree by adding $\frac{0}{1}$ up to powers $\pm A^{\pm 1}$ as follows. 
\begin{enumerate}
\item[(i)] Start from $\alpha $, and combine by edges upward successively rational numbers, which have relations as parents and mediants 
(In the Stern-Brocot tree we only have to add edges between disconnected pairs of a parent and its mediant). 
\item[(ii)] Repeat the above procedure until the path arrives at $\frac{0}{1}$ or $\frac{1}{1}$. 
\item[(iii)] Set $-t$ or $-t^{-1}$ on the obtained edges based on the following rule: $-t$ is set for an edge on the left oblique line of a triangle, and $-t^{-1}$ is set on the right. 
\item[(iv)] Take the product of all  $-t^{\pm 1}$ on the path from $\alpha $ to $\frac{0}{1}$ or $\frac{1}{1}$, and take the sum of them running over such all paths. 
\end{enumerate}

We see that the obtained Laurent polynomial of $t$ coincides with the Kauffman bracket polynomial $\langle T(\alpha )\rangle $ up to powers $\pm A^{\pm 1}$, provided that $t=A^4$ is substituted. This result will be shown in Subsection \ref{subsection2-3}. 

\begin{figure}[htbp]
\includegraphics[width=15.5cm]{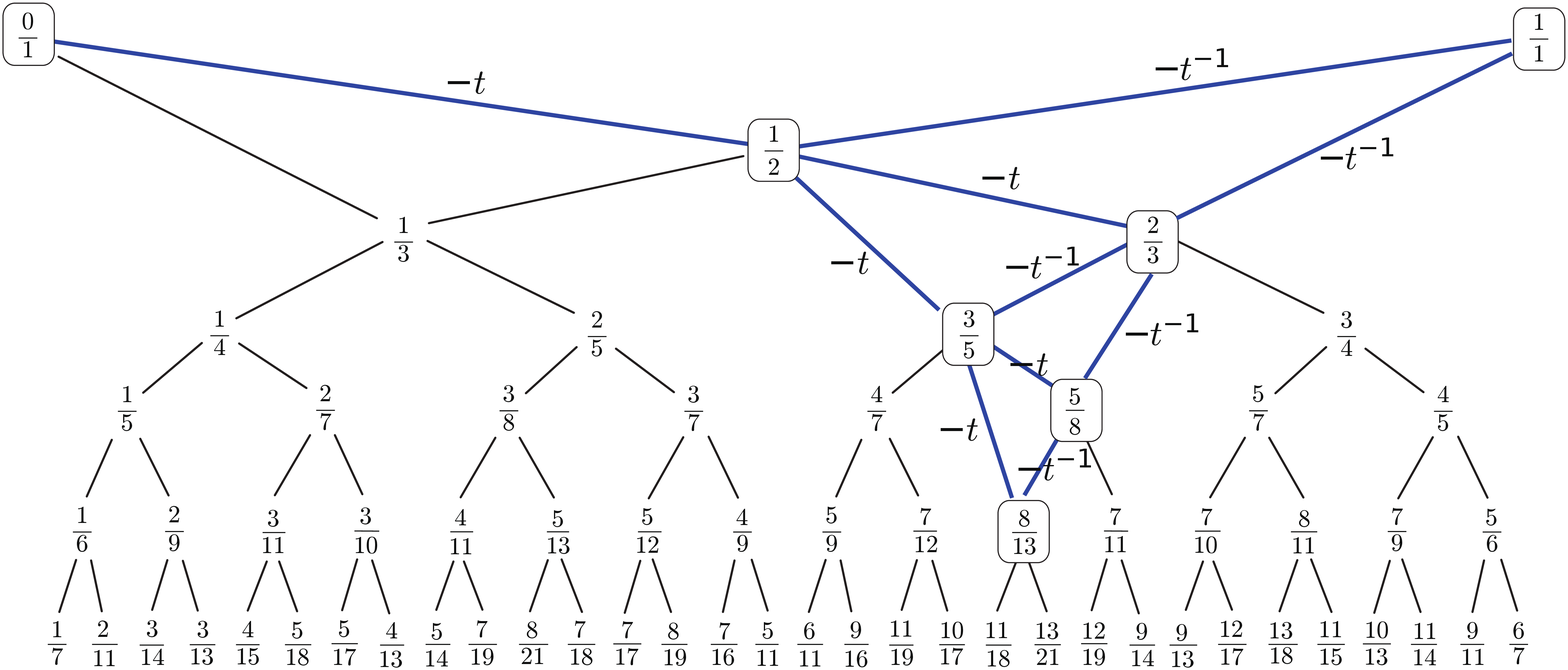}
\caption{}\label{figw6}
\end{figure}

\par \smallskip 
\subsection{Yamada's reconstruction of the Kauffman brackets for rational tangles} 
\par 
Yamada \cite{Yamada-Proceeding} introduced the concept of an ancestor triangle of a rational number to compute the Kauffman bracket polynomial for rational knots and links from Farey neighbors.  
The ancestor triangle naturally appears in steps of the process which is mentioned in the previous subsection. 

\par \medskip 
\begin{defn}[{\bf S.Yamada \cite{Yamada-Proceeding}}]
Let  $\alpha $ be a nonzero rational number. 
If $\alpha >0$, then we have a triangle whose vertices are $\alpha , \frac{0}{1}(=0), \frac{1}{0}(=\infty )$ by tracing the roots of parents from $\alpha$.  
The triangle with decomposition into small triangles whose vertices are Farey neighbors and their Farey sums is said to be the ancestor triangle of $\alpha$. 
These small triangles are called fundamental triangles. 
When we draw the Yamada's ancestor triangle of $\alpha$ in the plane, the vertices corresponding to $\frac{0}{1}, \frac{1}{0}$ put on the negative and positive parts in the $x$-axis, respectively, and the vertex corresponding to $\alpha $ put on the negative part in the $y$-axis.  
\end{defn}

\par 
Let us explain Yamada's formulation of the Kauffman bracket polynomial for rational tangles based on ancestor triangles. 
Let $\mathbb{Q}_+$ be the set of all positive rational numbers. 
For each $\alpha \in \mathbb{Q}_+-\{ 0\} $, we 
denote by $r(\alpha )$ and $l(\alpha )$ the numbers  
of edges on the right and left oblique sides of the Yamada's ancestor triangle  
of $\alpha $, respectively. 
We also set for $\ast =0, \infty$ 
$$P_{\ast}(\alpha ):=\{ \text{the descending paths from $\ast$ to $\alpha$}\} ,$$  
and for $\gamma \in P_{\infty }(\alpha )\cup P_{0}(\alpha )$ denote by $w_R(\gamma )$ and $w_L(\gamma )$ 
the numbers of fundamental triangles in the right-hand and left-hand sides of $\gamma$, respectively. 
Let us define a map $\phi : \mathbb{Q}_+\cup \{ \infty \} \longrightarrow \Lambda^2$ by 
$\phi (\infty )=A^6[\infty ],\ \phi (0)=[0]$ and
\begin{equation}\label{eqw1-13}
\begin{aligned}
\phi (\alpha )&=\Bigl( A^6(-A^4)^{-r(\alpha )}\sum\limits_{\gamma \in P_{\infty}(\alpha )} (-A^4)^{w_R(\gamma )}\Bigr) [\infty ] \\ 
&\qquad 
+\Bigl( (-A^4)^{l(\alpha )}\sum\limits_{\gamma \in P_{0}(\alpha )} (-A^4)^{-w_L(\gamma )}\Bigr) [0] 
\end{aligned}
\end{equation}
for $\alpha \in \mathbb{Q}_+-\{ 0\} $. 

\par \smallskip 
\begin{exam}
If $\alpha =\frac{7}{4}$, then 
$\alpha =\frac{5}{3}\sharp \frac{2}{1},  
\frac{5}{3}=\frac{3}{2}\sharp \frac{2}{1},   
\frac{3}{2}=\frac{1}{1}\sharp \frac{2}{1}$,  
$\frac{2}{1}=\frac{1}{1}\sharp \frac{1}{0},  
\frac{1}{1}=\frac{0}{1}\sharp \frac{1}{0}$. 
Thus, the  Yamada's ancestor triangle of $\frac{7}{4}$ is given 
by the left-hand side in Figure~\ref{figw7}. 

\begin{figure}[htbp]
\centering 
\includegraphics[height=3cm]{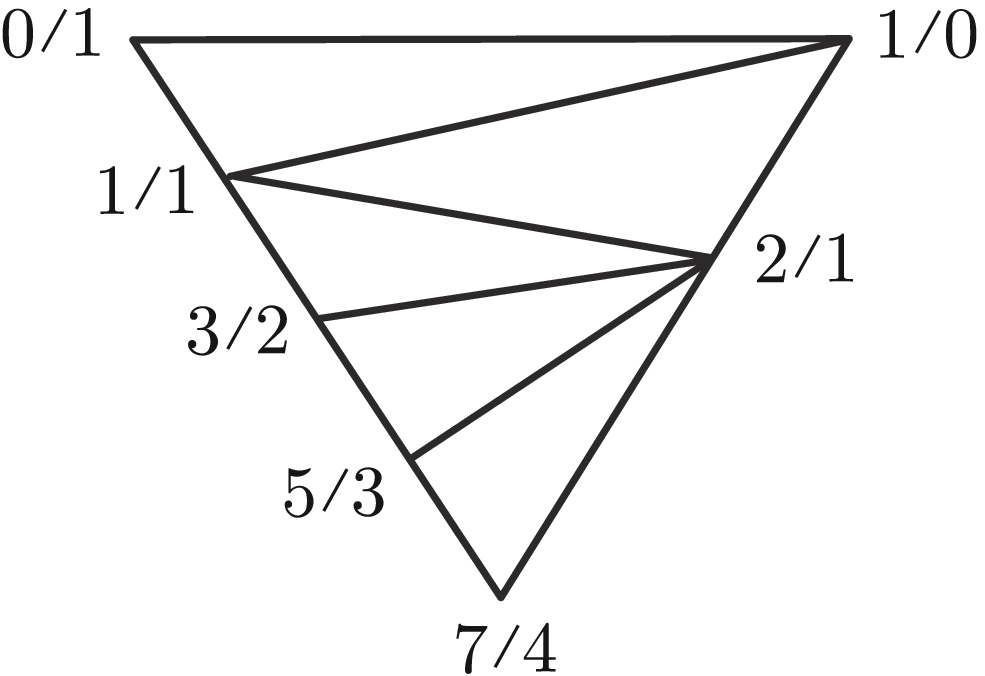}\hspace{2cm}  
\includegraphics[height=3cm]{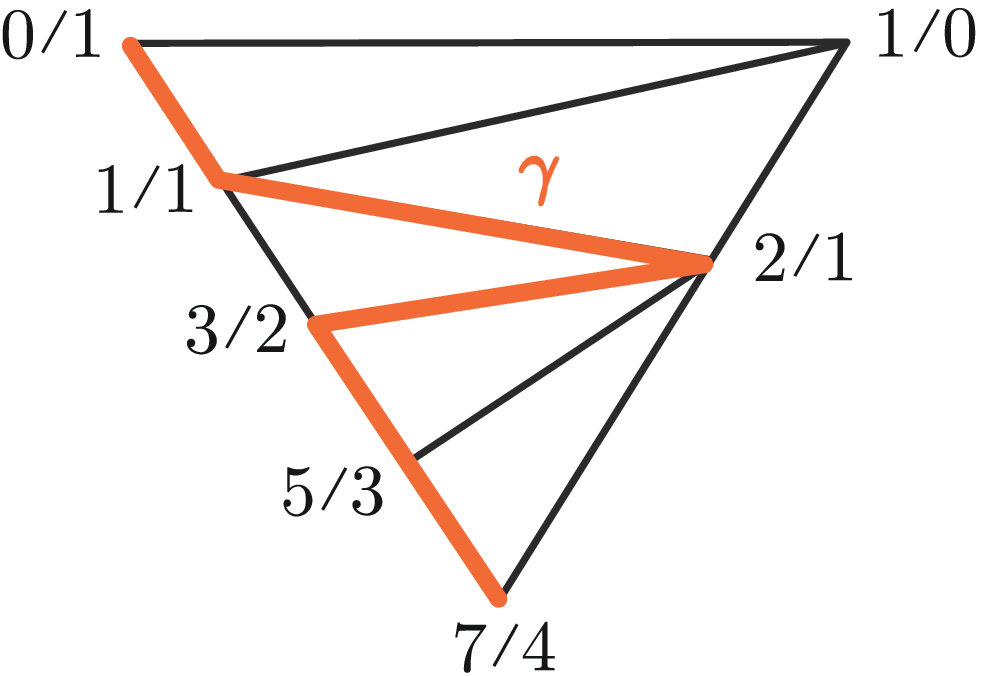}
\caption{}\label{figw7}
\end{figure}

In this case 
$r(\alpha )=2,\ l(\alpha )=4$, 
and for the path  $\gamma $ depicted in the right-hand side in Figure~\ref{figw7}, 
$w_R(\gamma )=4 ,\ w_L(\gamma )=1$. 
\end{exam}

\par 
\begin{thm}[\kern-0.1em {\bf S.Yamada \cite[Theorem 2]{Yamada-Proceeding}}]\label{w1-8}
The map 
$\phi : \mathbb{Q}_+\cup \{ \infty \} \longrightarrow \Lambda ^2$ satisfies 
\begin{equation}\label{eqw1-14}
\phi (0)=[0],\quad 
\phi (\infty )=A^6[\infty ], 
\end{equation}
and the recursive equation 
\begin{equation}\label{eqw1-15}
\phi (x)=-A^4\phi (y)-A^{-4}\phi (z),
\end{equation}
where $x, y, z\in \mathbb{Q}_+\cup \{ \infty \}$, and $(y,z)$ is the pair of parents of $x$. 
\end{thm}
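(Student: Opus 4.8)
The plan is to verify the two boundary conditions \eqref{eqw1-14} directly from the definition of $\phi$ on $\{0,\infty\}$, and then establish the recursion \eqref{eqw1-15} by a careful bookkeeping of how the combinatorial data $r,l,P_\infty,P_0,w_R,w_L$ of the ancestor triangle of $x$ relate to those of its parents $y<x<z$. The boundary cases are immediate since $\phi(0)=[0]$ and $\phi(\infty)=A^6[\infty]$ are imposed by fiat. For the recursion, the geometric picture is that the ancestor triangle of $x=y\sharp z$ is obtained by gluing the ancestor triangles of $y$ and of $z$ along the common edge joining the Farey neighbors $y$ and $z$, together with one new fundamental triangle with vertices $y,z,x$ sitting at the bottom. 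So the first step is to make precise this decomposition of the ancestor triangle of $x$.

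Granting that decomposition, I would treat the $[\infty]$-coefficient and the $[0]$-coefficient separately. For the $[\infty]$-coefficient one must understand $r(x)$ and the descending paths in $P_\infty(x)$. A path from $\infty$ down to $x$ must pass through exactly one of $y$ or $z$ as the penultimate vertex (the two vertices adjacent to $x$ on the ``upper'' side), so $P_\infty(x)$ splits as a disjoint union of the paths through $y$ and the paths through $z$, i.e.\ (essentially) $P_\infty(y)\sqcup P_\infty(z)$ after removing the last edge. One then tracks how $w_R(\gamma)$ changes: appending the final edge into $x$ and reindexing $r$ adds a controlled shift, and the single new fundamental triangle $\{y,z,x\}$ contributes to the right-count of exactly those paths entering $x$ from one side. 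Writing $r(x)$ in terms of $r(y)$ and $r(z)$ (the right side of the triangle of $x$ is the right side of the triangle of $z$ plus the new edge $xz$, so $r(x)=r(z)+1$, while the edge $xy$ lies on the left side contributing to $l(x)$), the factor $A^6(-A^4)^{-r(x)}$ distributes so that the $[\infty]$-coefficient of $\phi(x)$ becomes exactly $-A^4$ times that of $\phi(y)$ plus $-A^{-4}$ times that of $\phi(z)$; one checks the two exponent shifts $-A^4$ and $-A^{-4}$ arise precisely from the asymmetry between which parent's subtree shares the new left edge versus the new right edge. The $[0]$-coefficient is handled by the mirror-image argument, swapping the roles of $r\leftrightarrow l$, $P_\infty\leftrightarrow P_0$, $w_R\leftrightarrow w_L$, and $-A^4\leftrightarrow -A^{-4}$.

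I expect the main obstacle to be the sign and exponent bookkeeping in the inductive step — specifically, confirming that the shift contributed by (a) adding one edge to the left or right oblique side, (b) adding one new fundamental triangle, and (c) reindexing the sums $\sum_\gamma(-A^4)^{w_R(\gamma)}$ when a path is extended, all combine to give precisely the coefficients $-A^4$ and $-A^{-4}$ rather than, say, $-A^{-4}$ and $-A^4$ or some other power. The cleanest way to pin this down is to first prove the identity on a small base case (e.g.\ $x=\tfrac11=\tfrac01\sharp\tfrac10$, where $\phi(\tfrac11)$ can be computed by hand and compared against $-A^4[0]-A^{-4}A^6[\infty]$), and then show the inductive step preserves it; alternatively, since the recursion \eqref{eqw1-15} together with \eqref{eqw1-14} determines a unique map on $\mathbb{Q}_+\cup\{\infty\}$ by Lemma~\ref{w1-5}(3), it suffices to check that the explicit formula \eqref{eqw1-13} satisfies the recursion, which is exactly the computation outlined above. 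A sanity check against Example~\ref{w1-8}-type data (e.g.\ $\alpha=\tfrac74$ with $r=2$, $l=4$) at the end will confirm the normalization.
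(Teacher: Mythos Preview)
Your overall strategy---decompose $P_\infty(x)$ and $P_0(x)$ according to whether the last edge is $a=xy$ or $b=xz$, then track the shifts in $r,l,w_R,w_L$---is exactly the route the paper takes.  But your geometric description of the ancestor triangle of $x$ is wrong, and fixing it is not cosmetic: it is precisely what makes the exponent bookkeeping come out.

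You claim the ancestor triangle of $x=y\sharp z$ is built by gluing the ancestor triangles of $y$ and of $z$ along the edge $yz$, plus one new fundamental triangle.  This cannot be right: both parents' ancestor triangles already contain $0$ and $\infty$ as vertices, so they are not two pieces meeting only along $yz$.  In fact one is nested inside the other.  Writing $y=\tfrac{p}{q}$, $z=\tfrac{r}{s}$, the paper first splits into the cases $q>s$ and $q<s$.  When $q>s$, one checks that $\tfrac{p-r}{q-s}$ and $z$ are Farey neighbours with mediant $y$, so $z$ is itself an ancestor of $y$; the whole ancestor triangle of $z$ sits strictly inside that of $y$, and the edge $yz$ is not even present in the ancestor triangle of $z$ (since $y$ is not a vertex there).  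The correct picture is: the ancestor triangle of $x$ equals the ancestor triangle of the \emph{deeper} parent (here $y$) with a single new fundamental triangle $\{y,z,x\}$ appended at the bottom.

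This nesting is also why the $[0]$-coefficient is \emph{not} a clean mirror of the $[\infty]$-coefficient.  In the case $q>s$ one gets $r(x)=r(y)=r(z)+1$, and the $[\infty]$-side goes through with shifts of exactly $\pm 1$.  But on the $[0]$-side one finds $l(x)=l(y)+1=l(z)+k$ for some $k\ge 1$ (the number of left-oblique edges between $y$ and the left parent of $z$), and for $\gamma\in P_0(z)$ the extension by $b$ picks up $w_L(\gamma\ast b)=w_L(\gamma)+k+1$, because all $k$ extra fundamental triangles lying between the ancestor triangle of $z$ and the edge $b$ fall on the left of $\gamma\ast b$.  The two occurrences of $k$ then cancel to give the factor $-A^{-4}$, but you only see this once you have the correct nested picture; the ``glued along $yz$'' model would lead you to expect $l(x)=l(z)+1$ and $w_L(\gamma\ast b)=w_L(\gamma)+1$, which gives the wrong coefficient.
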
 
\begin{proof}
\par 
Write as $y=\frac{p}{q}$ and $z=\frac{r}{s}$ as irreducible fractions, and set $x=\frac{p+r}{q+s}$.  
Let us show \eqref{eqw1-15}. 
For each $\alpha \in \mathbb{Q}_+\cup \{ \infty \}$, let $\phi _{\infty }(\alpha )$ and $\phi _0(\alpha )$ be the coefficients of $[\infty ]$ and $[0]$ in \eqref{eqw1-13}, respectively. 
Note that $rq-ps=1$ since $y$ and $z$ are Farey neighbors. 
\par 
First, let us consider the case of $q-s>0$. 
Then $p-r\geq 0$, and 
$q^{\prime}:=q-s,\ p^{\prime}:=p-r$ satisfy 
$rq^{\prime}-p^{\prime}s=rq-ps=1$. 
Thus, the fraction $\frac{p^{\prime}}{q^{\prime}}$ is 
irreducible, and 
$\frac{p^{\prime}}{q^{\prime}}<\frac{r}{s},\ \frac{p}{q}=\frac{p^{\prime}}{q^{\prime}}\sharp \frac{r}{s}$. 
Therefore, the  Yamada's ancestor triangle of $x=\frac{p+r}{q+s}$ has a shape as the left-hand side in Figure~\ref{figw8}. 

\begin{figure}[htbp]
\centering 
\includegraphics[height=4cm]{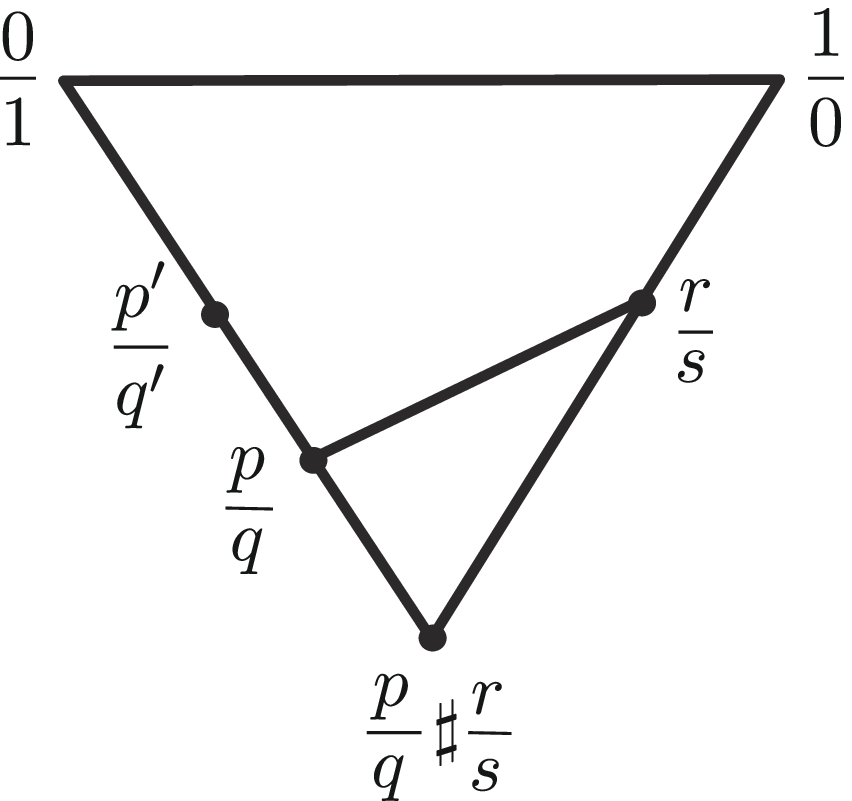} 
\hspace{2cm}
\includegraphics[height=4cm]{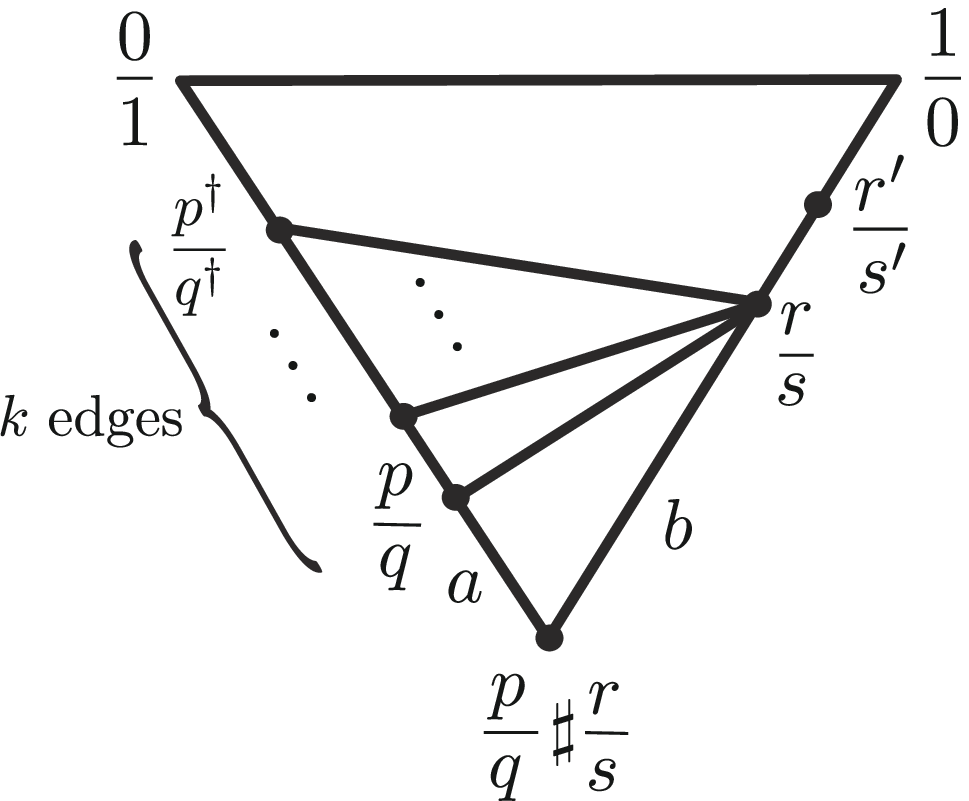}
\caption{}\label{figw8}
\end{figure}

Now, let $\frac{p^{\dag}}{q^{\dag}}, \frac{r^{\prime}}{s^{\prime}}$ be Farey neighbors and $\frac{r}{s}=\frac{p^{\dag}}{q^{\dag}}\sharp \frac{r^{\prime}}{s^{\prime}}$. 
Let $k\ (\geq 1)$ be the number of edges on the left oblique side between 
$\frac{p^{\dag}}{q^{\dag}}$ and $\frac{p}{q}$ in the  Yamada's ancestor triangle of $x=\frac{p+r}{q+s}$, and 
$a, b$ be the edges connecting $\frac{p}{q}, \frac{r}{s}$ to $x$, respectively.  
Then, 
\begin{align*}
r\Bigl(\frac{p}{q}\sharp \frac{r}{s}\Bigr) &=r\Bigl(\frac{p}{q}\Bigr) =r\Bigl(\frac{r}{s}\Bigr) +1,\\[0.2cm]  
l\Bigl(\frac{p}{q}\sharp \frac{r}{s}\Bigr) &=l\Bigl(\frac{p}{q}\Bigr) +1=l\Bigl(\frac{r}{s}\Bigr) +k , \displaybreak[0]\\[0.2cm]  
P_{\infty }\Bigl(\frac{p}{q}\sharp \frac{r}{s}\Bigr) &=\Bigl\{ \ \gamma \ast a \ \Bigr| \ \gamma \in P_{\infty}\Bigl(\frac{p}{q}\Bigr) \ \Bigr\} 
\cup \Bigl\{ \ \gamma \ast b \ \Bigr| \ \gamma \in P_{\infty}\Bigl(\frac{r}{s}\Bigr) \ \Bigr\} ,\\[0.2cm]  
P_0\Bigl(\frac{p}{q}\sharp \frac{r}{s}\Bigr) &= \Bigl\{ \ \gamma \ast a \ \Bigr| \ \gamma \in P_0\Bigl(\frac{p}{q}\Bigr) \ \Bigr\} 
\cup \Bigl\{ \ \gamma \ast b \ \Bigr| \ \gamma \in P_0\Bigl(\frac{r}{s}\Bigr) \ \Bigr\} , 
\end{align*}
and 
\begin{align*}
w_R(\gamma \ast a)&=w_R(\gamma )+1 \quad \text{for $\gamma \in P_{\infty}\Bigl(\frac{p}{q}\Bigr)$},\\[0.1cm]
w_R(\gamma \ast b)&=w_R(\gamma )\quad \text{for $\gamma \in P_{\infty}\Bigl(\frac{r}{s}\Bigr)$}, \\[0.1cm]
w_L(\gamma \ast a)&=w_L(\gamma )\quad \text{for $\gamma \in P_0\Bigl(\frac{p}{q}\Bigr)$},\\[0.1cm]  
w_L(\gamma \ast b)&=w_L(\gamma )+k+1\quad \text{for $\gamma \in P_0\Bigl(\frac{r}{s}\Bigr)$ }  
\quad (\text{see the right-hand side in Figure~\ref{figw8}}). 
\end{align*}
Hence, 
$\phi _{\ast }(\frac{p}{q}\sharp \frac{r}{s}) 
=-A^4\phi _{\ast }(\frac{p}{q}) -A^{-4}\phi _{\ast }(\frac{r}{s})$  for $\ast =\infty , 0$. 
\par 
In the case of $q-s<0$ we see that \eqref{eqw1-15} may also be shown by the same argument above. 
\end{proof}

\par \medskip 
Note that by \eqref{eqw1-4} and \eqref{eqw1-15} 
\begin{equation}\label{eqw1-16}
v(\phi (x))=-A^4v(\phi (y))-A^{-4}v(\phi (z))
\end{equation}
for $x, y, z\in \mathbb{Q}_+\cup \{ \infty \}$ so that $(y,z)$ is the pair of parents of $x$, and 
\begin{equation}
\begin{aligned}
v(\phi (\alpha ))&=(1+A^4)(-A^4)^{-r(\alpha )+1}\sum\limits_{\gamma \in P_{\infty}(\alpha )} (-A^4)^{w_R(\gamma )} \\ 
&\qquad +(-A^4)^{l(\alpha )}\sum\limits_{\gamma \in P_{0}(\alpha )} (-A^4)^{-w_L(\gamma )}
\end{aligned}
\end{equation}
for $\alpha \in \mathbb{Q}_+-\{ 0\} $. So, it is a Laurent polynomial in variable $-A^4$.  
From the above formulation we have: 

\par \medskip 
\begin{cor}\label{w1-9}
Define a function $\tilde{\phi }: \mathbb{Q}_+\cup \{ \infty \} \longrightarrow \Lambda^2$ by 
$\tilde{\phi }(\infty )=[\infty ],\ \tilde{\phi }(0)=[0]$ and 
\begin{equation}\label{eqw1-18}
\tilde{\phi }(\alpha )=\Bigl( (-A^4)^{-r(\alpha )}\sum\limits_{\gamma \in P_{\infty}(\alpha )} (-A^4)^{w_R(\gamma )}\Bigr) [\infty ]
+\Bigl( (-A^4)^{l(\alpha )}\sum\limits_{\gamma \in P_{0}(\alpha )} (-A^4)^{-w_L(\gamma )}\Bigr) [0]. 
\end{equation}
Then 
\begin{equation}
\tilde{\phi }(x)=-A^4\tilde{\phi }(y)-A^{-4}\tilde{\phi }(z), 
\end{equation}
where $x, y, z\in \mathbb{Q}_+\cup \{ \infty \}$ and $(y,z)$ is the pair of parents of $x$, 
and for a positive integer $\alpha \in \mathbb{Q}_+$ 
\begin{equation}\label{eqw1-19}
v(\phi (\alpha ))=\text{tr}(\tilde{\phi }(\alpha )), 
\end{equation}
where $\text{tr}$ is a map from $\Lambda ^2$ to $\Lambda$ defined by
$$\text{tr}(a[\infty ]+b[0])=a(-A^4)(A^4+1)+b\qquad (a, b\in \Lambda ).$$
\par \vspace{-0.85cm}
\ \qed 
\end{cor}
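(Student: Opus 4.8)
The plan is to deduce both assertions directly from Theorem~\ref{w1-8} by observing that $\tilde{\phi}$ differs from $\phi$ only by a fixed rescaling of the $[\infty]$-coordinate. First I would introduce the $\Lambda$-linear map $\psi\colon\Lambda^2\longrightarrow\Lambda^2$ given by $\psi(a[\infty]+b[0])=A^{-6}a[\infty]+b[0]$ for $a,b\in\Lambda$. Comparing the defining formula \eqref{eqw1-18} of $\tilde{\phi}$ with \eqref{eqw1-13}, and checking the two boundary values (so that $\psi(\phi(\infty))=\psi(A^6[\infty])=[\infty]=\tilde{\phi}(\infty)$ and $\psi(\phi(0))=[0]=\tilde{\phi}(0)$), one sees at once that $\tilde{\phi}=\psi\circ\phi$ on all of $\mathbb{Q}_+\cup\{\infty\}$; equivalently, writing $\phi(\alpha)=\phi_{\infty}(\alpha)[\infty]+\phi_0(\alpha)[0]$, one has $\tilde{\phi}(\alpha)=A^{-6}\phi_{\infty}(\alpha)[\infty]+\phi_0(\alpha)[0]$ for every $\alpha$.

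For the recursion I would simply apply $\psi$ to the identity $\phi(x)=-A^4\phi(y)-A^{-4}\phi(z)$ of Theorem~\ref{w1-8}. Since $\psi$ is $\Lambda$-linear and the coefficients $-A^{\pm 4}$ lie in $\Lambda$, this yields $\tilde{\phi}(x)=-A^4\tilde{\phi}(y)-A^{-4}\tilde{\phi}(z)$ with no further work.

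For the trace identity I would compute directly: by the definition of $\text{tr}$ together with the expression for $\tilde{\phi}(\alpha)$ above,
$$\text{tr}(\tilde{\phi}(\alpha))=A^{-6}\phi_{\infty}(\alpha)\,(-A^4)(A^4+1)+\phi_0(\alpha).$$
The key point is that the rescaling constant has been chosen precisely so that $A^{-6}(-A^4)(A^4+1)=-A^2-A^{-2}=\delta$; hence $\text{tr}(\tilde{\phi}(\alpha))=\delta\,\phi_{\infty}(\alpha)+\phi_0(\alpha)=v(\phi(\alpha))$ by \eqref{eqw1-4}. (In fact this argument proves \eqref{eqw1-19} for every $\alpha\in\mathbb{Q}_+\cup\{\infty\}$; the stated case of a positive integer $\alpha$ is then a special case.)

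There is essentially no genuine obstacle here: all of the content lies in the normalization bookkeeping, namely verifying $\tilde{\phi}=\psi\circ\phi$ (the boundary values $0$ and $\infty$ included) and checking the single scalar identity $A^{-6}(-A^4)(A^4+1)=\delta$. One could alternatively reprove the recursion for $\tilde{\phi}$ from scratch by repeating the Farey-triangle analysis used in the proof of Theorem~\ref{w1-8}, but that is unnecessary once the rescaling is noticed.
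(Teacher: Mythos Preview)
Your proposal is correct and matches the paper's approach: the paper states Corollary~\ref{w1-9} as an immediate consequence of Theorem~\ref{w1-8} (the proof is just the \qed\ box following the words ``From the above formulation we have''), and your rescaling argument via $\psi$ together with the scalar check $A^{-6}(-A^4)(A^4+1)=\delta$ is exactly the routine verification the paper omits.
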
 

Yamada \cite[Theorem 1]{Yamada-Proceeding} found that the value $v(\phi (\alpha ))$ coincides with the Kauffman bracket of $D(T(\alpha ))$ up to powers of $A^{\pm 1}$ (see also Theorem~\ref{w1-10} below). 
This means that the computation of Jones polynomial of rational knots and links is essentially reduced to that of the function $\phi$. 
\par 
The following theorem is an explicit version of \cite[Theorem 1]{Yamada-Proceeding}.  

\bigskip \noindent 
\begin{thm}\label{w1-10}
Let $a_1, \ldots , a_n$ be positive integers, and $a_0$ be a non-negative integer. 
\par 
If $n$ is even, then 
$$\phi ([a_0; a_1, \ldots , a_{n-1}, a_n])=(-A^3)^{a_0-a_1+\cdots -a_{n-1}+a_n}\langle T([a_0; a_1, \ldots , a_{n-1} ,a_n])\rangle , $$
and if $n$ is odd, then
$$\phi ([a_0; a_1, \ldots ,a_n])=(-A^3)^{a_0-a_1+\cdots -a_n+2}\langle T([a_0; a_1, \ldots ,a_n])\rangle .$$
\end{thm}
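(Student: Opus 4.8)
The plan is to establish Theorem~\ref{w1-10} by induction on the continued fraction expansion of $\frac{p}{q}=[a_0;a_1,\ldots,a_n]$, proving simultaneously the following two statements for the rational tangle diagram $T(\frac{p}{q})=[[a_0],[a_1],\ldots,[a_n]]$ built as in \eqref{eqw1-11}:
\begin{align*}
\phi\bigl([a_0;a_1,\ldots,a_n]\bigr)&=(-A^3)^{e_n}\,\langle T([a_0;a_1,\ldots,a_n])\rangle \quad (n\text{ even}),\\
\phi\bigl([a_0;a_1,\ldots,a_n]\bigr)&=(-A^3)^{e_n+2}\,\langle T([a_0;a_1,\ldots,a_n])\rangle \quad (n\text{ odd}),
\end{align*}
where $e_n:=a_0-a_1+\cdots+(-1)^na_n$. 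The base cases are the integer and vertical tangles: for $\alpha=n/1=[n;\,]$ and $\alpha=1/n=[0;n]$ one compares the explicit formulas in Lemma~\ref{w1-2}(1),(2) with the values $\phi(n)$ and $\phi(1/n)$ read off from the ancestor triangles (whose shape in these cases is a single fan of $n$ fundamental triangles, so the path sums collapse to geometric series), and with $\phi(0)=[0]$, $\phi(\infty)=A^6[\infty]$ from \eqref{eqw1-14}. This is a direct but routine check.

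For the inductive step I would not use the Farey-parent recursion \eqref{eqw1-15} directly, since passing from $[a_0;\ldots,a_{n-1},a_n]$ to $[a_0;\ldots,a_{n-1},a_n+1]$ is one Farey step but incrementing a middle $a_i$ is not. Instead the cleaner route is to track how both sides behave under the two tangle operations $T\mapsto T\bowtie[1]$ (i.e.\ $[a_0]\mapsto[a_0+1]$, adding a horizontal twist) and $T\mapsto T\ast\frac{1}{[1]}$ (prepending a vertical twist, which on fractions is $\alpha\mapsto\frac{1}{1+1/\alpha}$ up to the standard continued-fraction bookkeeping). On the $\langle-\rangle$ side these are governed by \eqref{eqw1-9} together with Lemmas~\ref{w1-2} and \ref{w1-3}; on the $\phi$ side I would use Corollary~\ref{w1-9} / \eqref{eqw1-16} and the combinatorial description \eqref{eqw1-13} of how $r(\alpha)$, $l(\alpha)$, and the path-weight sums $w_R,w_L$ change when one appends a fundamental triangle on the right (a $[0]$-twist) or on the left (an $[\infty]$-twist). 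Matching the scalar prefactors $(-A^4)^{\pm r}$, $(-A^4)^{l}$ against the powers $A^{\pm n}$, $(-A^3)^{\mp3}$ coming from Lemma~\ref{w1-2} is exactly what produces the exponent $(-A^3)^{e_n}$ (resp.\ $e_n+2$), with the parity of $n$ entering through which of $[0]$ or $[\infty]$ sits at the outermost position of the diagram \eqref{eqw1-11}.

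Concretely, the induction would be on $n$ and then, for fixed $n$, on $a_0$ and on the $a_i$'s: first reduce $a_0$ to $0$ using $\phi(\alpha+1)$ vs.\ $\langle[1]\bowtie T(\alpha)\rangle$ (here one needs $v(\phi(\alpha))=\mathrm{tr}(\tilde\phi(\alpha))$ from \eqref{eqw1-19} and Lemma~\ref{w1-3} to see the prefactor pick up exactly one factor $(-A^3)$), then handle $[0;a_1,\ldots,a_n]=1/[a_1;a_2,\ldots,a_n]$ by the inversion/rotation identities of Lemma~\ref{w1-1} applied to $T([a_1;a_2,\ldots,a_n])$, which both shifts the parity $n\mapsto n-1$ and, via the bar-operation $\overline{(-)}$, is consistent with the sign change $e_n\mapsto -e_{n-1}$ in the exponent. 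The main obstacle I expect is bookkeeping the exponent: one must check that the "$+2$" correction in the odd case, the $A^6$ in $\phi(\infty)$, and the asymmetry between the $[\infty]$- and $[0]$-coefficients in \eqref{eqw1-13} all conspire so that a single clean formula $(-A^3)^{e_n}$ (resp.\ $(-A^3)^{e_n+2}$) survives at every stage; getting the normalizations of Yamada's $\phi$ (which carries the extra $A^6$) to line up with Kauffman's unnormalized bracket $\langle-\rangle$ is where sign/degree errors are most likely, and I would pin it down by verifying the formula explicitly on $\frac{7}{4}=[1;1,3]$ and one even example such as $\frac{8}{13}=[0;1,1,1,1,2]$ (the latter already appears in Figure~\ref{figw6}) before trusting the general induction.
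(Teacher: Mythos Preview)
Your dismissal of the Farey-parent recursion \eqref{eqw1-15} is based on a misconception, and this is where your plan diverges unnecessarily from the clean route. You say you avoid \eqref{eqw1-15} because ``incrementing a middle $a_i$ is not'' a Farey step---but you never need to increment a middle $a_i$. The paper's proof inducts first on $n$ and then, for fixed $n$, on the \emph{last} partial quotient $a_n$ only. The base case $a_n=1$ reduces to length $n-1$ via the continued-fraction identity $[a_0;\ldots,a_{n-1},1]=[a_0;\ldots,a_{n-1}+1]$, and the step $a_n=k\to k+1$ is precisely one Farey step: $[a_0;\ldots,a_{n-1},k+1]$ is the mediant of $[a_0;\ldots,a_{n-1}]$ and $[a_0;\ldots,a_{n-1},k]$ (in the appropriate order depending on the parity of $n$). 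So Theorem~\ref{w1-8} applies directly, and on the bracket side a single application of (KB1) at the outermost crossing of the $[a_n]$-box gives the matching recursion for $\langle T([a_0;\ldots,a_n])\rangle$. This is exactly the mechanism already worked out in Example~\ref{w1-12}(2), and the general case is no harder.

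Your proposed alternative---reduce $a_0$ to $0$ via $\alpha\mapsto\alpha+1$, then peel off the leading quotient via $\alpha\mapsto 1/\alpha$---could in principle be made to work, but it has real gaps as stated. Lemma~\ref{w1-1} tells you how $\langle T(-)\rangle$ behaves under rotation/inversion, but you also need to know how $\phi$ behaves under $\alpha\mapsto 1/\alpha$ and $\alpha\mapsto\alpha+1$, and neither is provided by the paper. (The first would require a reflection symmetry of ancestor triangles exchanging the $0$- and $\infty$-vertices; the second is not a single Farey step unless $\alpha$ is an integer.) Your appeal to \eqref{eqw1-19} is also misplaced: that identity concerns the scalar $v(\phi(\alpha))$, whereas the theorem is a statement about $\phi(\alpha)\in\Lambda^2$ itself. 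None of this is fatal, but it means your route requires proving two auxiliary transformation laws for $\phi$ before the induction can even start, while the paper's route needs nothing beyond Theorem~\ref{w1-8} and (KB1).
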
 

\par \medskip
Before verifying the above theorem, let us observe its statement is true for the case where $n$ is small. 

\par \smallskip 
\begin{exam}\label{w1-11}
$(1)$ Let $n$ be a positive integer. 
Since 
$\frac{1}{n}=\frac{0}{1}\sharp \frac{1}{n-1},\ \frac{1}{n-1}=\frac{0}{1}\sharp \frac{1}{n-2},\cdots , \frac{1}{1}=\frac{0}{1}\sharp \frac{1}{0}$, 
the  Yamada's ancestor triangle of $\alpha =\frac{1}{n}$ is as in Figure~\ref{figw9}. 

\begin{figure}[htbp]
\centering
\includegraphics[height=4cm]{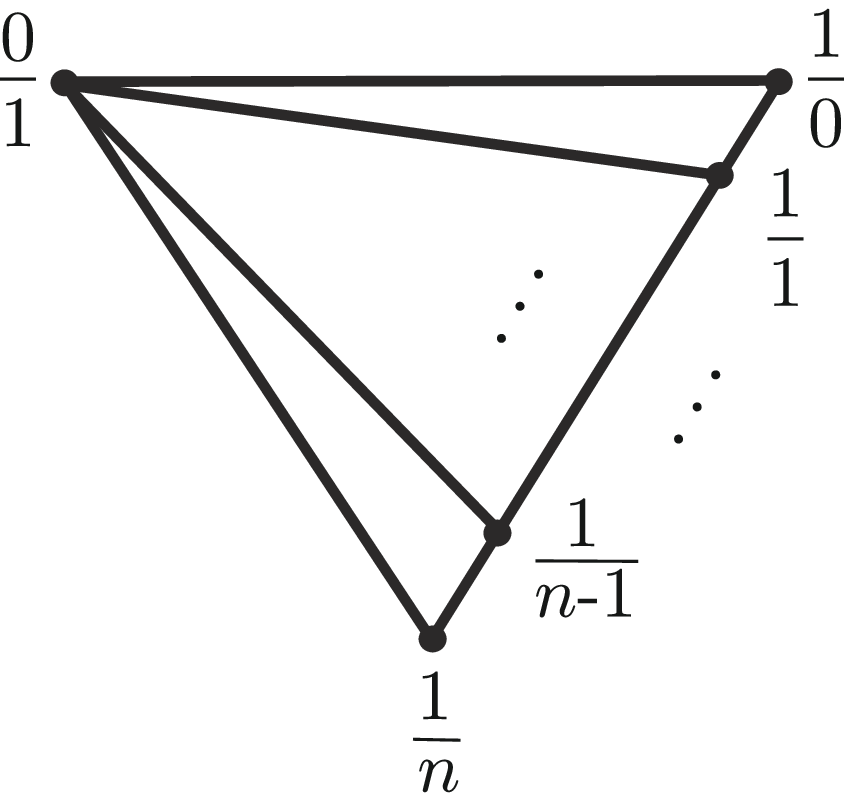}
\caption{}\label{figw9}
\end{figure}

Thus, 
$r(\frac{1}{n}) =n,\  
l(\frac{1}{n}) =1,\  
P_{\infty }(\frac{1}{n}) =\{ \gamma \} ,\  
P_0(\frac{1}{n}) = \{ \gamma _1,\dots , \gamma_n\} $, 
where 
$\gamma : \frac{1}{0}\to \frac{1}{1}\to \cdots \to \frac{1}{n}$,  
$\gamma _k : \frac{0}{1}\to \frac{1}{k}\to \frac{1}{k+1}\to \cdots \to \frac{1}{n}$ for $k=1,2,\ldots , n-1$, 
and $\gamma _n : \frac{0}{1}\to \frac{1}{n}$ are descending paths. 
Since $w_R(\gamma )=0, w_L(\gamma _k)=n-k$, 
we have 
$\phi ( \frac{1}{n})
=(-A^2)(-A^4)^{-n+1}\{ [\infty ]+A^2\sum_{k=0}^{n-1}(-A^4)^{k}[0]\}$. 
Thus, by Example \ref{w1-4} 
$$\Bigl\langle T\Bigl( \frac{1}{n}\Bigr)\Bigr\rangle \ 
=(-A^3)^{n-2}\phi \Bigl( \frac{1}{n}\Bigr). $$
\indent 
$(2)$ By a similar argument, it can be shown that 
$\langle T( \frac{n}{1})\rangle \ =(-A^{-3})^n\phi ( \frac{n}{1})$. 
\end{exam} 

The result in Example~\ref{w1-11} (1) is extended as follows: 

\par \bigskip \noindent 
\begin{exam}\label{w1-12}
$(1)$ Let $m, n$ be positive integers. 
Since $m+\frac{1}{n}=\frac{m}{1}\sharp \frac{mn-m+1}{n-1}$, 
we see that 
$\phi ( m+\frac{1}{n})
=-A^4\phi (m)-A^{-4}\phi (m+\frac{1}{n-1}) $. 
By repeating this process and using Example \ref{w1-4}(2) we have 
\begin{align*}
\phi \Bigl( m+\frac{1}{n}\Bigr)
&=\Bigl(-A^4+\sum\limits_{k=0}^{n-2}(-A^{-4})^k\Bigr)\phi (m)+(-A^{-4})^n\phi (\infty ) \\ 
&=(-A^3)^{m-n+2}\biggl(  d\Bigl( \frac{1}{n}\Bigr) \langle [m]\rangle +A^{-n}(-A^3)^{-m}[\infty ]\biggr)\displaybreak[0]\\ 
&=(-A^3)^{m-n+2}\Bigl\langle T\Bigl( m+\frac{1}{n}\Bigr)\Bigr\rangle .
\end{align*}
\indent 
$(2)$ Let $l,m, n$ be positive integers,\ $n\geq 2$, and $\alpha =l+\frac{1}{m+\frac{1}{n}}$. 
Then $\frac{lmn+l+n-lm-1}{mn-m+1}, \frac{lm+1}{m}$ are Farey neighbors, and 
$\alpha 
=(l+\frac{1}{m+\frac{1}{n-1}}) \sharp ( l+\frac{1}{m})$. 
Therefore, $[l; m,n]=[l ; m,n-1]\sharp [l ; m]$, 
and $\phi ([l ; m,n])=-A^4\phi ([l ; m,n-1])-A^{-4}\phi ([l ; m])$. 
It follows from Part (1) that $\phi ([l ; m])=(-A^3)^{l-m+2}\langle T([l ; m])\rangle $. 
\par 
Similarly, one can show that  
\begin{equation}\label{eqw1-21}
\phi ([l ; m,n])=(-A^3)^{l-m+n}\langle T([l ; m,n])\rangle 
\end{equation}
for positive integers $l,m, n$ by using 
$\phi ([l ; m,1])=\phi ([l ; m+1])
=(-A^3)^{l-m+1}\langle T([l ; m, 1])\rangle $ and 
$\langle T([l ; m,n])\rangle =A\langle T([l ; m,n-1])\rangle +A^{-1}(-A^{-3})^{n-1}\langle T([l ; m])\rangle $ for $n\geq 2$. 
\qed 
\end{exam}

\par \medskip \noindent 
{\bf Proof of Theorem~\ref{w1-10}.} 
\par 
For $n=0, 1, 2$ the theorem has already verified in Examples \ref{w1-11} and \ref{w1-12}. 
Let $n\geq 3$, and assume that the statement is true for $n-1$. 
\par 
Let $n\geq 3$ be odd. 
 If $a_n=1$, then 
\begin{align*}
\phi ([a_0 ; a_1,  \ldots ,a_n])
&=\phi ([a_0; a_1, \ldots ,a_{n-1},1]) 
=\phi ([a_0; a_1, \ldots ,a_{n-1}+1]) \\ 
&=(-A^3)^{a_0-a_1+\cdots -a_{n-2}+a_{n-1}+1}\langle T([a_0; a_1,  \ldots , a_{n-1}+1])\rangle \\ 
&=(-A^3)^{a_0-a_1+\cdots -a_{n-2}+a_{n-1}-a_n+2}\langle T([a_0; a_1,  \ldots , a_{n-1}, 1])\rangle . 
\end{align*}
\par 
Let $k$ be a positive integer, and assume that the statement is true for the case of $a_n=k$. 
If $a_n=k+1$, then $[a_0; a_1, \ldots , a_{n-1}],\ [a_0; a_1, \ldots , a_n-1]$ are Farey neighbors, 
and $[a_0; a_1, \ldots , a_n]=[a_0; a_1, \ldots , a_{n-1}]\sharp [a_0; a_1, \ldots , a_n-1]$. 
Thus, by Theorem~\ref{w1-8} we have 
\begin{align*}
\phi ([a_0; a_1, \ldots ,a_n])
&=\phi ([a_0; a_1, \ldots ,a_{n-1},k+1])\\ 
&=-A^{-4}\phi ([a_0; a_1, \ldots ,a_{n-1}, k])-A^4\phi ([a_0; a_1, \ldots ,a_{n-1}])  \displaybreak[0]\\ 
&=-A^{-4}(-A^3)^{a_0-a_1+\cdots -a_{n-2}+a_{n-1}-k+2}\langle T([a_0; a_1, \ldots , a_{n-1},k])\rangle \\ 
&\quad -A^4(-A^3)^{a_0-a_1+\cdots -a_{n-2}+a_{n-1}}\langle T([a_0; a_1, \ldots , a_{n-1}])\rangle \displaybreak[0]\\ 
&=(-A^3)^{a_0-a_1+\cdots -a_{n-2}+a_{n-1}-k+1}\langle T([a_0; a_1, \ldots , a_{n-1}, k+1])\rangle . 
\end{align*}
\par 
By a similar argument we see that the statement is true for the case where $n\geq 4$ is even. 
\qed 

\medskip 
By combining Theorems~\ref{w1-8} and \ref{w1-10}, we obtain a formula for the Kauffman bracket polynomials of rational tangle diagrams to compute it based on Farey neighbors. 

\noindent 
\begin{exam}
For positive integers $p, n$
$$\phi \Bigl( \frac{p}{pn+1}\Bigr) =(-A^3)^{p-n}\Bigl\langle T\Bigl( \frac{p}{pn+1}\Bigr) \Bigr\rangle $$
by Theorem~\ref{w1-10} since $\frac{p}{pn+1}=[0; n, p]$.  
\end{exam}

\section{The Kauffman bracket polynomials of Conway-Coxeter friezes of zigzag-type}
\par 
Some Conway-Coxeter friezes that we call of zigzag-type are closely related to words consisting of $L$ and $R$. 
An $LR$ word corresponds to a  rational number in the open interval $(0,1)$. 
\par 
In this section first we introduce two operations $i(w)$ and $r(w)$ for $LR$ words $w$, and examine these properties related to rational numbers. 
Next, we explain the definition of Conway-Coxeter friezes of zigzag-type, and show that the Kauffman bracket polynomial can be also defined for them by composing the functions $v$ and $\phi $ defined in \eqref{eqw1-4} and \eqref{eqw1-13}. 
In the final subsection we derive various properties of the Kauffman bracket polynomial for Conway-Coxeter friezes. 

\subsection{Irreducible fractions and $LR$ words}
\par 
Any downward path in the Stern-Brocot tree can be represented by an $LR$ word, which is obtained by recording $L$ or $R$ depending on a leftward or rightward walking. 
If we fix a starting number, then the above $LR$ word is uniquely determined.  
In this paper, for a rational number $\alpha $ in the open interval $(0,1)$ we choose $\frac{1}{2}$ as the starting number of downward paths.  
The reason why we adopt this definition comes from a relationship with Conway-Coxeter friezes. 
For the same reason we consider the inverse ordering of an $LR$ word, and 
denote the resulting word by $w(\alpha )$, and call it the $LR$ word associated to $\alpha$. 

\par \smallskip 
\begin{exam}\label{w2-1}
$w\Bigl(\dfrac{1}{2}\Bigr)=\emptyset$, 
$w\Bigl(\dfrac{1}{5}\Bigr)=LLL=L^3$, 
$w\Bigl(\dfrac{2}{7}\Bigr)=RLL=RL^2$, 
$w\Bigl(\dfrac{3}{8}\Bigr)=LRL$, 
$w\Bigl(\dfrac{3}{7}\Bigr)=RRL=R^2L$. 
\end{exam}

\par \medskip 
$LR$ words of Farey sums can be known by those of Farey neighbors. 

\par \medskip 
\begin{lem}\label{w2-2}
Let $\frac{p}{q}, \frac{r}{s}\in \mathbb{Q}\cap (0,1)$ be Farey neighbors, and $w, w^{\prime}$ are the corresponding $LR$ words of them, respectively. 
Then
\begin{equation}\label{eqw2-1}
w\Bigl( \dfrac{p}{q}\sharp \dfrac{r}{s}\Bigr) =
\begin{cases} 
Lw^{\prime} & \text{if $\ell (w)<\ell (w^{\prime})$},\\ 
Rw & \text{if $\ell (w)>\ell (w^{\prime})$,}
\end{cases}
\end{equation}
where $\ell (-)$ denotes the length function on the words. 
We denote the right-hand side of \eqref{eqw2-1} by $w\vee w^{\prime}$. 
\qed 
\end{lem}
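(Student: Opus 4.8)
\textbf{Proof proposal for Lemma~\ref{w2-2}.}

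The plan is to translate the Farey-sum operation $\sharp$ into a statement about paths in the Stern--Brocot tree and then read off what it does to the associated $LR$ words. Recall that for a rational number $\alpha \in \mathbb{Q}\cap(0,1)$, the word $w(\alpha)$ is obtained by walking down the (extended) Stern--Brocot tree from the start vertex $\frac{1}{2}$ to $\alpha$, recording $L$ or $R$ at each step, and then reversing the resulting word. So I would first set up the dictionary: a non-reversed word $u$ for $\alpha$ encodes the sequence of left/right turns on the descending path $\frac12 = \beta_0 \to \beta_1 \to \cdots \to \beta_{\ell(u)} = \alpha$; the parents $(y,z)$ of $\alpha$ in the sense of Lemma~\ref{w1-5}(3) are precisely the pair of Farey neighbors $\beta_{\ell(u)-1}$ and the ``other'' ancestor that bounds $\alpha$ on the opposite side, and $\alpha = y \sharp z$. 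This is exactly the structure described in Subsection~\ref{subsection1-3}.

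Next I would observe the key tree-geometric fact: if $\frac{p}{q}$ and $\frac{r}{s}$ are Farey neighbors in $(0,1)$ arranged in ascending order (so $qr - ps = 1$), then one of the two lies on the descending path to the other, and it is the one with the \emph{shorter} $LR$ word. Precisely, comparing lengths $\ell(w)$ and $\ell(w')$: if $\ell(w) < \ell(w')$ then $\frac{p}{q}$ is an ancestor of $\frac{r}{s}$, the vertex $\frac{p}{q}\sharp\frac{r}{s}$ is the child of $\frac{r}{s}$ obtained by stepping \emph{left} (because the mediant lies to the left of $\frac rs$, as $\frac pq < \frac pq\sharp\frac rs < \frac rs$), so the non-reversed word of the mediant is $u' L$ where $u'$ is the non-reversed word of $\frac rs$; reversing gives $w(\frac pq\sharp\frac rs) = L\,w'$. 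Symmetrically, if $\ell(w) > \ell(w')$ then $\frac{r}{s}$ is an ancestor of $\frac{p}{q}$, the mediant is the child of $\frac pq$ obtained by stepping \emph{right}, and reversing yields $R\,w$. The case $\ell(w)=\ell(w')$ cannot occur for Farey neighbors in $(0,1)$ (the only Farey-neighbor pair of equal word-length issue is handled by noting distinct rationals have distinct words, and an ancestor strictly shortens), except that one must handle the boundary rationals $\frac01$ and $\frac11$ separately since these sit outside the tree proper — here I would just check directly, e.g. $\frac01 \sharp \frac11 = \frac12$ with empty word, consistent with $R\,w = R\,w(\tfrac11)$ once one fixes the convention that $w(\tfrac11)=\emptyset$ on the relevant boundary edge, matching Example~\ref{w2-1}.

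The main obstacle I anticipate is bookkeeping the reversal convention cleanly: because $w(\alpha)$ is the \emph{reverse} of the natural descending word, appending a turn at the bottom of the path becomes prepending a letter to $w(\alpha)$, and one must make sure the ``left/right'' label that gets prepended is determined by which \emph{parent} is the deeper one, not by some other comparison. I would pin this down by verifying the formula against the explicit computations in Example~\ref{w2-1} — for instance $\frac27 = \frac13 \sharp \frac14$ with $w(\tfrac13)=R$, $w(\tfrac14)=LL$, so $\ell(w(\tfrac13)) < \ell(w(\tfrac14))$ and the formula predicts $w(\tfrac27) = L\,w(\tfrac14) = LLL$; but Example~\ref{w2-1} gives $w(\tfrac27)=RLL$, which forces a recheck of which neighbor pair actually constitutes the parents (it should be $\frac14$ and $\frac13$ is not the parent — rather $\frac27 = \frac14 \sharp \frac13$ read with $\frac14$ the ancestor, so the added letter is $R$ relative to... ) — in other words, the genuinely delicate point is correctly identifying, among all Farey-neighbor pairs summing to $\alpha$, which exhibits the parent structure, and confirming that the length comparison in the statement selects the correct appended letter in every case. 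Once the conventions are nailed down against two or three worked examples, the proof reduces to the ascending/descending dichotomy above together with the elementary inequality $\frac pq < \frac pq \sharp \frac rs < \frac rs$.
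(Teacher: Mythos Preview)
The paper gives no proof of this lemma: it is stated with a bare \qed, presumably because the authors regard it as an immediate consequence of the definition of $w(\alpha)$ together with the standard structure of the Stern--Brocot tree. Your overall strategy---show that the deeper of the two Farey neighbors is the immediate tree-parent of the mediant, and then read off the prepended letter from whether the mediant sits to the left or right of that parent---is exactly the intended argument and is correct.

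The confusion in your worked example is purely bookkeeping, not a defect in the method. Two slips: first, $w(\tfrac{1}{3})=L$, not $R$ (see the first paragraph of the proof of Lemma~\ref{w2-5}); second, the paper's standing convention, stated just after \eqref{eq_Fareysum}, is that Farey neighbors are always listed in ascending order, so for $\tfrac{2}{7}$ one has $\tfrac{p}{q}=\tfrac{1}{4}$ and $\tfrac{r}{s}=\tfrac{1}{3}$. With $w=w(\tfrac{1}{4})=LL$ and $w'=w(\tfrac{1}{3})=L$ we get $\ell(w)>\ell(w')$, and the formula yields $Rw=RLL=w(\tfrac{2}{7})$, matching Example~\ref{w2-1}. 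Once these are corrected your dichotomy goes through without further difficulty. The one point worth stating explicitly (you gesture at it but do not quite prove it) is that both Farey neighbors lie on the descending path from $\tfrac{1}{2}$ to their mediant, so that the longer-word neighbor is indeed the immediate tree-parent; this follows from the uniqueness in Lemma~\ref{w1-5}(3) together with the recursive construction of the tree. The boundary cases you worry about ($\tfrac{0}{1}$ and $\tfrac{1}{1}$) are excluded by the hypothesis $\tfrac{p}{q},\tfrac{r}{s}\in(0,1)$, so no separate treatment is needed.
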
 

\begin{rem}\label{w2-3}
For positive integers $a_2, \ldots , a_m$, 
$\ell \bigl( w([0; a_2, \ldots , a_m])\bigr)>\ell \bigl( w([0; a_2, \ldots , a_{m-1}])\bigr)$. 
This inequality can be shown by double induction argument on $m\ (\geq 2)$ and $a_m$.
\end{rem}

\par \medskip 
\begin{exam}
\begin{enumerate}
\item[$(1)$] $w\bigl( \frac{5}{9}\bigr) =w\bigl( \frac{1}{2}\sharp \frac{4}{7}\bigr) =L^3R$ since  
$w\bigl( \frac{1}{2}\bigr) =\emptyset $ and $w\bigl( \frac{4}{7}\bigr) =L^2R$. 
\item[$(2)$] $w\bigl( \frac{5}{12}\bigr) =w\bigl( \frac{2}{5}\sharp \frac{3}{7}\bigr) =LR^2L$
since $w\bigl( \frac{2}{5}\bigr) =RL$ and $w\bigl( \frac{3}{7}\bigr) =R^2L$. 
\end{enumerate}
\end{exam} 

\par \medskip 
For an $LR$ word $w$ we denote by $i(w)$ the word obtained by changing $L$ with $R$, and 
by $r(w)$ the word obtained by reversing the order. 

\par \medskip 
\begin{lem}\label{w2-5}
\begin{enumerate}
\item[$(1)$] Let  $\frac{p}{q}\in \mathbb{Q}\cap (0,1)$ be an irreducible fraction, and set 
$w=w\bigl( \frac{p}{q}\bigr)$. Then 
$$i(w)= w\Bigl( \dfrac{q-p}{q}\Bigr).$$
\item[$(2)$] Let  $w, w^{\prime}$ be $LR$ words such that their corresponding irreducible fractions are Farey neighbors. 
Then 
$$i(w\vee w^{\prime})=i(w^{\prime})\vee i(w).$$
\end{enumerate}
\end{lem}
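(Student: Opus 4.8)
The plan is to prove both parts simultaneously by an induction on the position of $\frac{p}{q}$ in the Stern--Brocot tree (equivalently, on the length $\ell(w)$ of the associated $LR$ word), using Lemma~\ref{w1-5}(3) so that every nonzero rational in $(0,1)$ is the mediant of a unique pair of Farey neighbors. The base case is $w = \emptyset$, i.e.\ $\frac{p}{q} = \frac{1}{2}$: here $i(\emptyset) = \emptyset$ and $\frac{q-p}{q} = \frac{1}{2}$, so $(1)$ holds trivially, and $(2)$ is vacuous at length $0$ once we also record the boundary values — I would first observe that the map $\frac{p}{q}\mapsto\frac{q-p}{q}$ fixes $\frac{1}{2}$ and swaps the pair of ``ancestors'' $\frac{0}{1}\leftrightarrow\frac{1}{1}$ of $\frac{1}{2}$, and that reflecting the half-open interval $(0,1)$ through $\frac12$ reverses the left/right orientation of every edge in the Stern--Brocot tree, which is the geometric content behind swapping $L\leftrightarrow R$.

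For the inductive step of $(1)$, write $\frac{p}{q} = \frac{p_1}{q_1}\sharp\frac{p_2}{q_2}$ with $(\frac{p_1}{q_1},\frac{p_2}{q_2})$ the parents (Lemma~\ref{w1-5}(3)), so by Lemma~\ref{w2-2} we have $w = w_1 \vee w_2$ where $w_i = w(\tfrac{p_i}{q_i})$. Applying $x\mapsto \tfrac{q-x\cdot\text{denom}}{q}$ — more precisely using that $\frac{p_i}{q_i}\mapsto\frac{q_i-p_i}{q_i}$ sends the pair of parents of $\frac pq$ to the pair of parents of $\frac{q-p}{q}$, but with their order reversed (since reflection through $\frac12$ is orientation-reversing, the smaller parent becomes the larger one), we get that $\frac{q-p}{q}$ has parents $\bigl(\frac{q_2-p_2}{q_2},\frac{q_1-p_1}{q_1}\bigr)$ in ascending order. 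By the induction hypothesis applied to the (strictly shorter) words $w_1,w_2$, their reflected images have $LR$ words $i(w_1), i(w_2)$, so Lemma~\ref{w2-2} gives $w\bigl(\tfrac{q-p}{q}\bigr) = i(w_2)\vee i(w_1)$. It then remains to check the purely combinatorial identity $i(w_1\vee w_2) = i(w_2)\vee i(w_1)$, which is exactly statement $(2)$ for the pair $w_1, w_2$ — and since $\ell(w_1),\ell(w_2) < \ell(w)$, this is available by induction, closing the loop. This simultaneous induction is the cleanest way to organize it, because $(1)$ at length $n$ feeds on $(2)$ at length $<n$ and vice versa.

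Part $(2)$ in isolation is, in fact, just a statement about the word operations $\vee$ and $i$ once one unwinds the definition $w\vee w' = Lw'$ or $Rw$ according to which of $\ell(w),\ell(w')$ is larger (Lemma~\ref{w2-2}). Since $i$ preserves length, the case $\ell(w)<\ell(w')$ gives $i(w\vee w') = i(Lw') = R\,i(w')$, while $i(w')\vee i(w) = R\,i(w')$ because $\ell(i(w'))>\ell(i(w))$; the case $\ell(w)>\ell(w')$ is symmetric, and $\ell(w)=\ell(w')$ cannot occur for words coming from Farey neighbors (one length is always strictly larger, by Remark~\ref{w2-3} and the structure of the tree). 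So $(2)$ can actually be dispatched directly without induction, and then $(1)$ follows by a single clean induction using $(2)$ as a lemma.

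The main obstacle I anticipate is not any one computation but the bookkeeping of \emph{orientation reversal}: one must verify carefully that reflecting $(0,1)$ through $\frac12$ turns each ``left child'' relation into a ``right child'' relation and simultaneously swaps the ascending order of a Farey pair, so that the recursion $w = w_1\vee w_2$ is transformed precisely into $i(w) = i(w_2)\vee i(w_1)$ and not into $i(w_1)\vee i(w_2)$. Getting the order of the two reflected parents right — and confirming it is consistent with the convention ``Farey neighbors are written in ascending order, $qr-ps=1$'' fixed just before Lemma~\ref{w1-5} — is where an error would most naturally creep in. The arithmetic that $\frac{p_i}{q_i}$ Farey neighbors implies $\frac{q_i-p_i}{q_i}$ are again Farey neighbors is immediate from $|{(q_1-p_1)q_2 - q_1(q_2-p_2)}| = |q_1 p_2 - p_1 q_2| = 1$, so that part is routine.
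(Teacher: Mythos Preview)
Your proposal is correct and follows essentially the same architecture as the paper's proof: an induction on the length of $w$, using that the reflection $\frac{p}{q}\mapsto\frac{q-p}{q}$ sends the ordered pair of parents of $\frac{p}{q}$ to the ordered pair of parents of $\frac{q-p}{q}$ with the order reversed, together with Lemma~\ref{w2-2}. The paper carries out (1) and (2) as a genuinely simultaneous induction, verifying the base cases $\emptyset, L, R$ and then deducing both statements at length $d$ from both at length $<d$.

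Your observation that (2) can be dispatched \emph{directly}, with no induction, is a small but genuine simplification over the paper. Since $i$ preserves length and the defining formula \eqref{eqw2-1} for $\vee$ depends only on which of $\ell(w),\ell(w')$ is larger, the identity $i(w\vee w')=i(w')\vee i(w)$ is a two-line case check on words; the paper instead folds this same case check into the inductive step. With (2) in hand, your induction for (1) is exactly the paper's computation: write $\frac{p}{q}=\frac{p_1}{q_1}\sharp\frac{p_2}{q_2}$, apply the hypothesis to the shorter $w_i$, check that $\bigl(\frac{q_2-p_2}{q_2},\frac{q_1-p_1}{q_1}\bigr)$ are the parents of $\frac{q-p}{q}$ in ascending order (your determinant computation $|(q_1-p_1)q_2-q_1(q_2-p_2)|=|q_1p_2-p_1q_2|=1$ is exactly what the paper writes), and invoke Lemma~\ref{w2-2}. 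One small point you flag but do not fully resolve---and which the paper also leaves implicit---is the boundary situation where one parent is $\frac{0}{1}$ or $\frac{1}{1}$ (i.e.\ $\frac{p}{q}=\frac{1}{n}$ or $\frac{n-1}{n}$); there $w\bigl(\frac{p_i}{q_i}\bigr)$ is undefined, so the induction step as written does not apply, and these families should be checked directly (which is immediate: $i(L^{n-2})=R^{n-2}=w\bigl(\frac{n-1}{n}\bigr)$).
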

\begin{proof}
\par 
Parts (1) and (2) are proved by induction on the lengths of $LR$ words at the same time. 
\par 
First we show that the lemma is true for the $LR$ words with length $0$ or $1$. 
\par 
If an $LR$ word has length $0$, then it is $w_1:=w\bigl(\frac{1}{2}\bigr) =\emptyset$, and 
if it has length $1$, then it is $w_2:=w\bigl(\frac{1}{3}\bigr) =L$ or $w_3:=w\bigl(\frac{2}{3}\bigr) =R$. 
Since 
\begin{align*}
i(w_1)&=\emptyset =w\Bigl(\dfrac{1}{2}\Bigr) =w\Bigl(\dfrac{2-1}{2}\Bigr) ,\\ 
i(w_2)&=R=w\Bigl(\dfrac{2}{3}\Bigr) =w\Bigl(\dfrac{3-1}{3}\Bigr) ,\\ 
i(w_3)&=L=w\Bigl(\dfrac{1}{3}\Bigr) =w\Bigl(\dfrac{3-2}{3}\Bigr) , 
\end{align*}
we see that Part (1) holds for the $LR$ words with length $0$ or $1$. 
Furthermore, $i(w_2\vee w_1)=i(L)=R=i(w_1)\vee i(w_2)$ and $i(w_1\vee w_3)=i(R)=L=i(w_3)\vee i(w_1)$. 
Thus, Part (2) holds for the all pairs  $(w, w^{\prime})$ of $LR$ words with length less than or equal to $1$. 
\par 
Assume that (1) and (2) hold for the all $LR$ words with length  less than $d$.  
Let $w$ be an $LR$ word with length $d$, and represented by $w=w\bigl( \frac{x}{y}\bigr)$. 
Let $\bigl( \frac{p}{q}, \frac{r}{s}\bigr)$ be the pair of parents of $\frac{x}{y}$. 
If we write as $w_1=w\bigl( \frac{p}{q}\bigr)$ and $w_2=w\bigl( \frac{r}{s}\bigr)$, then by induction hypothesis $i(w_1)=w\bigl( \frac{q-p}{q}\bigr) ,\ i(w_2)=w\bigl( \frac{s-r}{s}\bigr)$. 
Since $s(q-p)-q(s-r)=-sp+rq=1$, 
the rational numbers $\frac{s-r}{s}, \frac{q-p}{q}$ are Farey neighbors. 
By the definitions of $\vee $ and $i$ we have 
$$i(w)=i(w_1\vee w_2)=\begin{cases}
Ri(w_2)\  & \text{if $\ell (w_1)<\ell (w_2)$},\\ 
Li(w_1)\  & \text{if $\ell (w_1)>\ell (w_2)$},
\end{cases}$$
and 
$$i(w_2)\vee i(w_1)=\begin{cases}
Li(w_1)\  & \text{if $\ell \bigl( i(w_2)\bigr) <\ell \bigl( i(w_1)\bigr)$},\\ 
Ri(w_2)\  & \text{if $\ell \bigl( i(w_2)\bigr) >\ell \bigl( i(w_1)\bigr)$}. 
\end{cases}$$
Since 
$\ell \bigl( i(w_1)\bigr) =\ell (w_1),\ \ell \bigl( i(w_2)\bigr) =\ell (w_2)$, we also have 
$i(w_2)\vee i(w_1)=i(w_1\vee w_2)$. 
Furthermore, 
$$i(w)=i(w_1\vee w_2)=i(w_2)\vee i(w_1)=w\Bigl( \dfrac{s-r}{s}\sharp \dfrac{q-p}{q}\Bigr) 
=w\Bigl( \dfrac{s+q-r-p}{s+q}\Bigr) =w\Bigl(\dfrac{y-x}{y}\Bigr).$$
Thus Part (1) holds for the $LR$ words with length $d$. 
By the same argument we see that 
Part (2) also holds for any $LR$ words $w_1, w_2$ with length less than or equal to $d$ such that the corresponding fractions are  Farey neighbors. 
\end{proof} 

\par \medskip 
\begin{lem}\label{w2-6}
Let  $\frac{p}{q}\in \mathbb{Q}\cap (0,1)$ be an irreducible fraction, and set 
$w=w\bigl( \frac{p}{q}\bigr)$. 
Then 
$$wL= w\Bigl( \frac{p}{p+q}\Bigr),\qquad wR= w\Bigl( \frac{p+(q-p)}{q+(q-p)}\Bigr).$$
\end{lem}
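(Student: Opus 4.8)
The plan is to derive both identities from Lemma~\ref{w2-2} together with a bookkeeping of which Farey-neighbor representation produces $\frac{p}{p+q}$ (resp.\ $\frac{p+(q-p)}{q+(q-p)}=\frac{q}{2q-p}$) as a mediant. First I would recall that by Lemma~\ref{w1-5}(3) the fraction $\frac{p}{q}\in (0,1)$ has a unique pair of parents $(\frac{a}{b},\frac{c}{d})$ with $\frac{a}{b}<\frac{p}{q}<\frac{c}{d}$, $bc-ad=1$, $a+c=p$, $b+d=q$; and that $\frac{0}{1}$ and $\frac{1}{1}$ are Farey neighbors with $\frac{0}{1}<\frac{p}{q}<\frac{1}{1}$ since $\frac{p}{q}\in(0,1)$. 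The key observation for the first identity is that $\frac{p}{p+q}$ lies in $(0,1)$ and equals the Farey sum $\frac{0}{1}\,\sharp\,\frac{p}{q}$, because $\frac{0}{1}$ and $\frac{p}{q}$ are Farey neighbors ($1\cdot p-0\cdot q=p$ — wait, this is $p$, not $1$, so one must instead route through the actual parent). So more carefully: the correct ascending Farey-neighbor pair realizing $\frac{p}{p+q}$ as a mediant is $(\frac{p}{q},\frac{?}{?})$ — one checks $\frac{p}{p+q}=\frac{p}{q}\,\sharp\,\frac{0}{1}$ with $\frac{0}{1}<\frac{p}{p+q}<\frac{p}{q}$ and $q\cdot 0-p\cdot 1 = -p$; again not a unit. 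The genuinely valid route is that $\frac{p}{p+q}$ is the mediant of $\frac{p}{q}$ and its left parent's companion, and one identifies this using $w(\tfrac{0}{1})$ conventions; I would instead organize the proof around an explicit induction.

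Concretely, I would prove the lemma by induction on $\ell(w)=\ell(w(\tfrac{p}{q}))$, running both claimed identities simultaneously, just as in Lemma~\ref{w2-5}. For the base case $w=\emptyset$, i.e.\ $\frac{p}{q}=\frac{1}{2}$, I would check directly that $wL=L=w(\tfrac13)$ and $\tfrac{p}{p+q}=\tfrac13$, while $wR=R=w(\tfrac23)$ and $\tfrac{p+(q-p)}{q+(q-p)}=\tfrac{2}{3}$; these agree with Example~\ref{w2-1}. For the inductive step, write $\frac{p}{q}=\frac{p_1}{q_1}\,\sharp\,\frac{p_2}{q_2}$ as the mediant of its parents, so $w=w_1\vee w_2$ with $w_i=w(\tfrac{p_i}{q_i})$, and (using Remark~\ref{w2-3}-type length comparisons) $w=Lw_2$ if $\ell(w_1)<\ell(w_2)$ and $w=Rw_1$ otherwise. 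In the first case, $wL=Lw_2L=L\,w(\tfrac{p_2}{p_2+q_2})$ by the induction hypothesis applied to $w_2$; then I must show $L\,w(\tfrac{p_2}{p_2+q_2})=w(\tfrac{p}{p+q})$, which by Lemma~\ref{w2-2} amounts to exhibiting $\frac{p}{p+q}$ as the mediant of two Farey neighbors one of whose $LR$-words is $w(\tfrac{p_2}{p_2+q_2})$ and is the longer of the two — and indeed $\frac{p}{p+q}=\frac{p_1}{q_1}\,\sharp\,\frac{p_2}{p_2+q_2}$ because $\frac{p_1}{q_1}$ and $\frac{p_2}{p_2+q_2}$ are Farey neighbors (one computes $q_1(p_2)-p_1(p_2+q_2)=q_1p_2-p_1q_2-p_1p_2 = 1 - p_1p_2$ — so again one must be careful and instead verify $\frac{p}{p+q}=\frac{p_1}{p_1+q_1}\,\sharp\,\frac{p_2}{p_2+q_2}$, whose numerators and denominators manifestly add correctly and whose Farey-neighbor property reduces to $q_1p_2-p_1q_2=1$, the parents' relation). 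The analogous computation handles $wR$ and the case $\ell(w_1)>\ell(w_2)$.

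The main obstacle I anticipate is purely bookkeeping: correctly matching the ascending/descending conventions (the paper uses $qr-ps=1$ and the reversed $LR$-word ordering with start $\tfrac12$) so that the $\vee$ in Lemma~\ref{w2-2} is applied with the operands in the right order and the $L$-versus-$R$ branch comes out correctly. In particular one must track that the ``doubling'' maps $\frac{a}{b}\mapsto\frac{a}{a+b}$ and $\frac{a}{b}\mapsto\frac{b}{2b-a}$ are compatible with taking parents — i.e.\ that the parents of $\frac{p}{p+q}$ are exactly the images under these maps of suitable parents of $\frac{p}{q}$ — and that the length comparison governing the $\vee$-branch is preserved under these maps (it is, since $\ell$ is unchanged by the induction hypothesis for the subwords). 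Once the conventions are pinned down, each case is a one-line Farey-sum verification plus an application of Lemma~\ref{w2-2} and the induction hypothesis, so the remaining work is routine.
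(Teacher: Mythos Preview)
Your proposal, once past its several false starts and self-corrections, is correct and follows essentially the same route as the paper: induction on $\ell(w)$, with the inductive step handled by checking that if $(\tfrac{p_1}{q_1},\tfrac{p_2}{q_2})$ are the parents of $\tfrac{p}{q}$ then $\tfrac{p}{p+q}=\tfrac{p_1}{p_1+q_1}\sharp\tfrac{p_2}{p_2+q_2}$ (and the analogous identity for the $R$-map), after which Lemma~\ref{w2-2} together with the preserved length inequality $\ell(w_1L)<\ell(w_2L)$ gives the result. The paper also checks $L$ and $R$ (not just $\emptyset$) as base cases, and you should too, since the induction hypothesis is applied to both parents.
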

\begin{proof}
\par 
This lemma is proved by induction on the length of $LR$ words as the proof of Lemma~\ref{w2-5}. 
\par 
The irreducible fractions which correspond to $\emptyset , L, R$ are $\frac{1}{2}, \frac{1}{3}, \frac{2}{3}$, respectively. 
It can be easily verified that the formulae in the lemma are satisfied for these $LR$ words. 
\par 
Assume that the lemma holds for the all $LR$ words with length less than $d$, and let $w$ be an $LR$ word with length $d$. 
We write as in the form
$w=w\bigl( \frac{p^{\prime}}{q^{\prime}}\sharp \frac{r^{\prime}}{s^{\prime}}\bigr) $, 
and set $w_1:=w\bigl( \frac{p^{\prime}}{q^{\prime}}\bigr) , 
w_2:=w\bigl( \frac{r^{\prime}}{s^{\prime}}\bigr) $. 
\par 
Let us consider the case $\ell (w_2)>\ell (w_1)$. 
By Lemma~\ref{w2-2} we have $w=Lw_2$, and by induction hypothesis 
$$w_2L=w\Bigl( \dfrac{r^{\prime}}{r^{\prime}+s^{\prime}}\Bigr) ,\qquad 
w_2R=w\Bigl( \dfrac{r^{\prime}+(s^{\prime}-r^{\prime})}{s^{\prime}+(s^{\prime}-r^{\prime})}\Bigr) .$$
Therefore
\begin{align*}
wL
&=Lw_2L 
=Lw\Bigl( \dfrac{r^{\prime}}{r^{\prime}+s^{\prime}}\Bigr) 
\overset{(\ast 1)}{=} w\Bigl( \dfrac{p^{\prime}}{p^{\prime}+q^{\prime}}\sharp \dfrac{r^{\prime}}{r^{\prime}+s^{\prime}}\Bigr) \\ 
&=w\Bigl( \dfrac{p^{\prime}+r^{\prime}}{p^{\prime}+q^{\prime}+r^{\prime}+s^{\prime}}\Bigr) ,\displaybreak[0]\\ 
wR
&=Lw_2R 
=Lw\Bigl( \dfrac{r^{\prime}+(s^{\prime}-r^{\prime})}{s^{\prime}+(s^{\prime}-r^{\prime})}\Bigr) 
\overset{(\ast 2)}{=} w\Bigl( \dfrac{p^{\prime}+(q^{\prime}-p^{\prime})}{q^{\prime}+(q^{\prime}-p^{\prime})}
\sharp \dfrac{r^{\prime}+(s^{\prime}-r^{\prime})}{s^{\prime}+(s^{\prime}-r^{\prime})}\Bigr) \\ 
&=w\Bigl( \dfrac{(p^{\prime}+r^{\prime})+(q^{\prime}+s^{\prime}-p^{\prime}-r^{\prime})}{(q^{\prime}+s^{\prime})+(q^{\prime}+s^{\prime}-p^{\prime}-r^{\prime})}\Bigr) .
\end{align*}
Here at $(\ast 1)$ we use $\ell \bigl( w\bigl( \frac{r^{\prime}}{r^{\prime}+s^{\prime}}\bigr) \bigr) =\ell (w_2L)>\ell (w_1L)=\ell \bigl( w\bigl( \frac{p^{\prime}}{p^{\prime}+q^{\prime}}\bigr) \bigr) $ and Lemma~\ref{w2-2}. 
We also use at $(\ast 2)$ a similar inequality and Lemma~\ref{w2-2}. 

\par 
Next, we consider the case $\ell (w_1)>\ell (w_2)$. 
Then, $w=Rw_1$, 
$w_1L=w\bigl( \frac{p^{\prime}}{p^{\prime}+q^{\prime}}\bigr) $ and 
$w_1R=w\bigl( \frac{p^{\prime}+(q^{\prime}-p^{\prime})}{q^{\prime}+(q^{\prime}-p^{\prime})}\bigr) $. 
By the same argument in the case $\ell (w_2)>\ell (w_1)$, one can show that 
$wL=w\bigl( \frac{p^{\prime}+r^{\prime}}{p^{\prime}+q^{\prime}+r^{\prime}+s^{\prime}}\bigr) $ and 
$wR
=w\bigl( \frac{(p^{\prime}+r^{\prime})+(q^{\prime}+s^{\prime}-p^{\prime}-r^{\prime})}{(q^{\prime}+s^{\prime})+(q^{\prime}+s^{\prime}-p^{\prime}-r^{\prime})}\bigr) $. 
This shows that the lemma holds for the $LR$ words with length $d$. 
\end{proof}

\par \medskip 
\begin{cor}\label{w2-7}
Let $w$ be an $LR$ word. If $i(w)=w([0; a_2, \ldots , a_m])$ for some even integer $m$, then 
\begin{enumerate}
\item[$(1)$] $wR=w([0; 1, a_2, \ldots , a_m])$, 
\item[$(2)$] $wL=w([0; 2, a_2-1, a_3, \ldots , a_m])$,
\item[$(3)$] $w=w([0; 1, a_2-1, a_3, \ldots , a_m])$.  
\end{enumerate}
\end{cor}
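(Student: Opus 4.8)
The plan is to deduce all three statements of Corollary~\ref{w2-7} from Lemmas~\ref{w2-5} and \ref{w2-6} together with the continued-fraction bookkeeping already established, so that no further induction is needed. I would begin by unpacking the hypothesis: if $i(w)=w([0;a_2,\ldots ,a_m])$ with $m$ even, then applying $i$ once more and using that $i$ is an involution on $LR$ words gives $w=i\bigl(w([0;a_2,\ldots ,a_m])\bigr)$. By Lemma~\ref{w2-5}(1), $i\bigl(w(\tfrac{p}{q})\bigr)=w(\tfrac{q-p}{q})$, so if $\tfrac{p}{q}=[0;a_2,\ldots ,a_m]$ then $w$ is the $LR$ word of $\tfrac{q-p}{q}$. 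Hence the whole corollary reduces to understanding, at the level of fractions, what $\tfrac{q-p}{q}$, then $\tfrac{q-p}{2q-p}$ and $\tfrac{q-p}{(q-p)+(q-(q-p))}=\tfrac{q-p}{q+( q-p)-\,p}$\,-style mediants look like in continued-fraction form --- i.e. I translate everything into a statement purely about the fraction $\tfrac{q-p}{q}$ and its images under the two maps $\tfrac{x}{y}\mapsto\tfrac{x}{x+y}$ and $\tfrac{x}{y}\mapsto\tfrac{x+(y-x)}{y+(y-x)}$ supplied by Lemma~\ref{w2-6}.

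Next I would record the two continued-fraction identities I expect to need. Writing $\tfrac{p}{q}=[0;a_2,\ldots ,a_m]$ with $0<p<q$, one has the standard facts that $\tfrac{q-p}{q}=[0;1,a_2-1,a_3,\ldots ,a_m]$ when $a_2\ge 2$ (and $=[0;a_3+1,a_4,\ldots ,a_m]$ when $a_2=1$, a case I would handle or exclude as the statement permits), and that the operation $\tfrac{x}{y}\mapsto\tfrac{x}{x+y}$ on a fraction $\tfrac{x}{y}=[0;b_1,b_2,\ldots]$ sends it to $[0;b_1+1,b_2,\ldots]$. Granting these, part (3) is immediate: $w=w(\tfrac{q-p}{q})=w([0;1,a_2-1,a_3,\ldots,a_m])$. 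For part (1), $wR=w\bigl(\tfrac{(q-p)+(q-(q-p))}{q+(q-(q-p))}\bigr)=w\bigl(\tfrac{q-p}{q+p}\bigr)$ by Lemma~\ref{w2-6}; and $\tfrac{q-p}{q+p}$ in continued fraction, via the $q-p$ over $q$ description plus one application of the $+1$-on-first-entry rule the wrong way, should come out to $[0;1,a_2,\ldots,a_m]$ --- this is where I would be most careful, since I need $\tfrac{q-p}{q+p}$, not $\tfrac{q-p}{(q-p)+q}=\tfrac{q-p}{2q-p}$, and must check which of the two Lemma~\ref{w2-6} formulas $R$ corresponds to for the fraction $\tfrac{q-p}{q}$. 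For part (2), $wL=w\bigl(\tfrac{q-p}{(q-p)+q}\bigr)=w\bigl(\tfrac{q-p}{2q-p}\bigr)$; starting from $\tfrac{q-p}{q}=[0;1,a_2-1,a_3,\ldots,a_m]$ and applying the $+1$-on-first-entry rule gives $[0;2,a_2-1,a_3,\ldots,a_m]$, exactly the claim.

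An alternative, and probably cleaner, route avoids fraction arithmetic by staying with $LR$ words throughout. From the hypothesis and Lemma~\ref{w2-5}(1) one knows $i(w)$ corresponds to $[0;a_2,\ldots,a_m]$; I would then compute $i(wR)$ and $i(wL)$ using Lemma~\ref{w2-5}(2) in the form $i(w_1\vee w_2)=i(w_2)\vee i(w_1)$, after expressing $wR$ and $wL$ as Farey-sum words $w_1\vee w_2$ via the parents of the relevant mediants, reduce to the length-$\le 1$ base cases of the two single-letter extensions, and finally apply $i$ once more. In either approach the single genuine obstacle is the continued-fraction identity relating $[0;a_2,\ldots,a_m]$, its ``complement'' $\tfrac{q-p}{q}$, and the two mediant operations $\tfrac{x}{y}\mapsto\tfrac{x}{x+y}$, $\tfrac{x}{y}\mapsto\tfrac{x+(y-x)}{y+(y-x)}$: one must track the parity condition ``$m$ even'' carefully (it is what guarantees $a_2\ge$ the right bound and that the reversed expansions match up), and dispose of the degenerate $a_2=1$ case, where $a_2-1=0$ forces the continued fraction to be renormalised as $[0;\ldots]$ with the first two entries merged. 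Once that identity is pinned down, all three parts of Corollary~\ref{w2-7} follow by direct substitution, with Remark~\ref{w2-3} available if a length comparison is needed to select the correct branch of Lemma~\ref{w2-2}.
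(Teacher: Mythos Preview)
Your plan matches the paper's proof: use Lemma~\ref{w2-5}(1) to translate the hypothesis into a statement about fractions, apply Lemma~\ref{w2-6} to get the fractions for $wL$ and $wR$, and finish with elementary continued-fraction identities. The paper labels things the other way around (it sets $w=w(\tfrac{p}{q})$ directly, so that $i(w)=w(\tfrac{q-p}{q})$ and $\tfrac{q-p}{q}=[0;a_2,\ldots,a_m]$), but the mechanism is identical.

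One concrete slip to fix: in your labeling $w=w(\tfrac{q-p}{q})$, the numerator in the $wR$ formula is $(q-p)+(q-(q-p))=q$, not $q-p$, so $wR=w\bigl(\tfrac{q}{q+p}\bigr)$. Then $\tfrac{q}{q+p}=\tfrac{1}{1+p/q}=[0;1,a_2,\ldots,a_m]$ immediately, which makes part (1) the cleanest of the three --- no delicate branch check is needed there. Your worry about the boundary case $a_2=1$ is reasonable, but the paper does not treat it separately either; the expressions in (2) and (3) are to be read with the usual convention that a zero partial quotient is absorbed into its neighbours.
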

\begin{proof}
(1) By Lemma~\ref{w2-6} we have 
$wR=w\bigl( \frac{p+(q-p)}{q+(q-p)}\bigr) =w\bigl( \frac{q}{q+(q-p)}\bigr) $. 
Since 
$$\dfrac{q}{q+(q-p)}
=0+\dfrac{1}{1+\dfrac{q-p}{q}}$$
and $i(w)=w\bigl( \frac{q-p}{q}\bigr)$ by Lemma~\ref{w2-5}(1), we have 
$\frac{q-p}{q}=[0; a_2, \ldots , a_m]$. Thus, 
$\frac{q}{q+(q-p)}=[0; 1, a_2, \ldots , a_m]$.
\par 
(2) By Lemma~\ref{w2-6} we have 
$wL=w\bigl( \frac{p}{p+q}\bigr) $. 
On the other hand, 
$$\dfrac{p}{p+q}
=0+\dfrac{1}{2+\dfrac{q-p}{p}}.$$
So, it can be written as $\frac{q-p}{p}=[0; b_2, \ldots , b_n]$ for some even integer $n$. 
By a similar argument in (1), we see that $n=m$ and $b_2=a_2-1, b_3=a_3, \ldots , b_m=a_m$. 
Thus $\frac{p}{p+q}=[0; 2, b_2, \ldots , b_m]=[0; 2, a_2-1, a_3, \ldots , a_m]$. 
\par 
(3) Let us write as $\frac{q-p}{p}=[0; b_2, \ldots , b_n]$ for some even integer $n$. 
Then $\frac{p}{q-p}=[b_2, \ldots , b_n]$. 
Thus 
$$\dfrac{q-p}{q}
=0+\dfrac{1}{1+\dfrac{p}{q-p}}
=[0; b_2+1, b_3, \ldots , b_n].$$
Since the continued fraction is equal to $[0; a_2, \ldots , a_m]$, it follows that 
$n=m$ and $b_2=a_2-1,\ b_3=a_3, \ldots , b_m=a_m$. 
Therefore $\frac{p}{q}=
[0; 1, b_2, \ldots , b_n]=[0; 1, a_2-1, a_3, \ldots , a_m]$. 
\end{proof}

\par \medskip 
For an $LR$ word $w$ we denote by $(ir)(w)$ the $LR$ word obtained by the composition of $i$ and $r$. 

\par \medskip 
\begin{lem}\label{w2-8}
Let $\alpha =\frac{x}{y}\in \mathbb{Q}\cap (0,1)$ be an irreducible fraction, and $\bigl( \frac{p}{q}, \frac{r}{s}\bigr)$ be the pair of parents of $\alpha $. 
If $w=w\bigl( \frac{x}{y}\bigr)$, 
then 
$$r(w)= w\Bigl( \dfrac{q}{y}\Bigr),\qquad (ir)(w)= w\Bigl( \dfrac{s}{y}\Bigr).$$
\end{lem}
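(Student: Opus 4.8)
The plan is to prove both identities simultaneously by induction on $\ell(w)$, following the same pattern used in the proofs of Lemmas~\ref{w2-5} and \ref{w2-6}. The base cases $\ell(w)\le 1$ correspond to the fractions $\tfrac12$ (with parents $\tfrac01,\tfrac11$), $\tfrac13$ (with parents $\tfrac01,\tfrac12$) and $\tfrac23$ (with parents $\tfrac12,\tfrac11$). For each of these $r$ and $ir$ act trivially or swap $L\leftrightarrow R$, and one checks directly that $r(\emptyset)=\emptyset=w(\tfrac12)$ matches $q/y=1/2$, that $r(L)=L=w(\tfrac13)$ matches $q/y=1/3$ (parent $\tfrac01$ has $q=1$), $(ir)(L)=R=w(\tfrac23)$ matches $s/y=2/3$, and symmetrically for $w=R$.

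For the inductive step, let $w=w\bigl(\tfrac{x}{y}\bigr)$ have length $d$, with parents $\bigl(\tfrac{p}{q},\tfrac{r}{s}\bigr)$, so that $\tfrac{x}{y}=\tfrac{p}{q}\sharp\tfrac{r}{s}$ and $w=w_1\vee w_2$ where $w_1=w\bigl(\tfrac{p}{q}\bigr)$, $w_2=w\bigl(\tfrac{r}{s}\bigr)$. By Lemma~\ref{w2-2}, $w=Lw_2$ if $\ell(w_1)<\ell(w_2)$ and $w=Rw_1$ otherwise. I would treat the case $\ell(w_1)<\ell(w_2)$ in detail (the other being symmetric). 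Here $r(w)=r(Lw_2)=r(w_2)L$ and $(ir)(w)=(ir)(w_2)R$. Now I need to identify the parents of $\tfrac{r}{s}$: write them as $\bigl(\tfrac{p'}{q'},\tfrac{r'}{s'}\bigr)$, so by the induction hypothesis $r(w_2)=w\bigl(\tfrac{q'}{s}\bigr)$ and $(ir)(w_2)=w\bigl(\tfrac{s'}{s}\bigr)$. The key structural fact, which I would establish from the parent relations, is that when $\ell(w_1)<\ell(w_2)$ one of the parents of $\tfrac{r}{s}$ equals $\tfrac{p}{q}$ (the ``long-distance'' parent side), so that $\{q',s'\}=\{q,\,s-q\}$ with the correct labelling determined by which parent is closer; combined with $y=q+s$ this is what makes the arithmetic work out.

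Then I would apply Lemma~\ref{w2-6}: from $r(w_2)=w\bigl(\tfrac{q'}{s}\bigr)$, appending $L$ gives $w\bigl(\tfrac{q'}{q'+s}\bigr)$ or appending via the $R$-rule gives the other mediant, and I must check that $q'+s=y=q+s$, i.e. $q'=q$, which is exactly the structural fact above; hence $r(w)=r(w_2)L=w\bigl(\tfrac{q}{y}\bigr)$ as desired. Similarly $(ir)(w)=(ir)(w_2)R=w\bigl(\tfrac{s'+(s-s')}{s+(s-s')}\bigr)=w\bigl(\tfrac{s}{2s-s'}\bigr)$, and I would verify $2s-s'=y$, i.e. $s'=2s-y=s-q$, again from the parent analysis; this yields $(ir)(w)=w\bigl(\tfrac{s}{y}\bigr)$. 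The symmetric case $\ell(w_1)>\ell(w_2)$ uses $w=Rw_1$ together with the appending rules in the mirrored form, and the fact that now one parent of $\tfrac{p}{q}$ equals $\tfrac{r}{s}$. The main obstacle I anticipate is precisely the bookkeeping in that structural step: correctly tracking which of the two parents of the longer child coincides with the shorter child, and getting the denominators $q'$, $s'$ pinned down as $q$ and $s-q$ (resp. $s-r$, $r$ in the mirror case). Once that is in place, everything reduces to the appending formulas of Lemma~\ref{w2-6}, which is routine. It may also be cleanest to fold the needed parent-identification into the induction as an auxiliary claim, in the same spirit as the simultaneous induction on (1) and (2) in Lemma~\ref{w2-5}.
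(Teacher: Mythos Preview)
Your proposal is correct and follows essentially the same approach as the paper's proof: induction on $\ell(w)$, with the inductive step reducing to Lemma~\ref{w2-6} after identifying the parents of the longer child. The one place where you are more tentative than necessary is the ``structural fact'': in the paper this is not folded into the induction but proved directly in one line. Namely, in the case $\ell(w_1)<\ell(w_2)$, since $\tfrac{p}{q}$ and $\tfrac{r}{s}$ are Farey neighbours with $qr-ps=1$, one checks that $\tfrac{p}{q}$ and $\tfrac{r-p}{s-q}$ are again Farey neighbours with $\tfrac{p}{q}\sharp\tfrac{r-p}{s-q}=\tfrac{r}{s}$; by the uniqueness in Lemma~\ref{w1-5}(3) these \emph{are} the parents of $\tfrac{r}{s}$, so $q'=q$ and $s'=s-q$ on the nose (no auxiliary claim needed). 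The mirror case gives parents $\bigl(\tfrac{p-r}{q-s},\tfrac{r}{s}\bigr)$ of $\tfrac{p}{q}$, with denominators $q-s$ and $s$ (your parenthetical ``$s-r$, $r$'' is a slip). With this identification in hand, the appending formulas of Lemma~\ref{w2-6} yield $r(w)=w(\tfrac{q}{y})$ and $(ir)(w)=w(\tfrac{s}{y})$ exactly as you outline.
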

\begin{proof}
\par 
This lemma is shown by induction on the length of $LR$ words.
\par 
For the $LR$ words $\emptyset ,\ L, R$ the equations in the lemma are satisfied as follows: 
\par 
$\bullet$ If $w=\emptyset $, then $r(w)=\emptyset = w\bigl( \frac{1}{2}\bigr),\  
(ir)(w)=\emptyset = w\bigl( \frac{1}{2}\bigr) $, and the pair of parents of $\frac{1}{2}$ is $\bigl( \frac{0}{1}, \frac{1}{1}\bigr)$. 
\par 
$\bullet$ If $w=L$, then $r(w)=L= w\bigl( \frac{1}{3}\bigr),\  
(ir)(w)=R= w\bigl( \frac{2}{3}\bigr)$, and the pair of parents of $\frac{1}{3}$ is $\bigl( \frac{0}{1}, \frac{1}{2}\bigr)$. 
\par 
$\bullet$ If $w=R$, then $r(w)=R= w\bigl( \frac{2}{3}\bigr),\ 
(ir)(w)=L= w\bigl( \frac{1}{3}\bigr)$, and the pair of parents of $\frac{2}{3}$ is $\bigl( \frac{1}{2}, \frac{1}{1}\bigr)$. 
\par 
Next, assume that the lemma holds for all $LR$ words with length less than $d$. 
Let $w$ be an $LR$ word with length $d$, and set 
$w_1=w\bigl(\frac{p}{q}\bigr) $ and $w_2=w\bigl(\frac{r}{s}\bigr)$. 
\par 
 In the case of $\ell (w_1)>\ell (w_2)$ we see that $w=Rw_1$ and $p\geq r,\ q\geq s$, and $\frac{p-r}{q-s}, \frac{r}{s}$ are Farey neighbors satisfying 
$\frac{p}{q}=\frac{p-r}{q-s}\sharp \frac{r}{s}$. 
Since 
$\ell (w_1)=d-1<d$, by induction hypothesis $r(w_1)= w\bigl( \frac{q-s}{q}\bigr) ,\ (ir)(w_1)= w\bigl( \frac{s}{q}\bigr)$. 
Hence by Lemma~\ref{w2-6}
\begin{align*}
r(w)&=r(w_1)R
=w\Bigl( \dfrac{q-s}{q}\Bigr)R
=w\Bigl(\dfrac{q}{q+s}\Bigr) 
=w\Bigl( \dfrac{q}{y}\Bigr) ,\\ 
(ir)(w)&=(ir)(w_1)L
=w\Bigl( \dfrac{s}{q}\Bigr)L
=w\Bigl( \dfrac{s}{s+q}\Bigr) 
=w\Bigl( \dfrac{s}{y}\Bigr) . 
\end{align*}
\par 
 In the case of $\ell (w_1)<\ell (w_2)$ we see that $w=Lw_2$ and $p\leq r,\ q\leq s$, and $\frac{p}{q}, \frac{r-p}{s-q}$ are Farey neighbors satisfying 
$\frac{r}{s}=\frac{p}{q}\sharp \frac{r-p}{s-q}$. 
Since $r(w_2)= w\bigl( \frac{q}{s}\bigr),\ (ir)(w_1)= w\bigl( \frac{s-q}{s}\bigr)$ by induction hypothesis, 
as the same argument above we have the desired equations. 
\end{proof}

\par \medskip 
\begin{exam}
$(1)$ If $w=LLLR$, then 
$$i(w)=RRRL,\quad r(w)=RLLL,\quad (ir)(w)=LRRR,$$
and the word $w$ corresponds to 
$\frac{5}{9}=\frac{1}{2}\sharp \frac{4}{7}$. Therefore 
we have the following table:
$$\renewcommand{\arraystretch}{1.7}
\begin{array}{c|cccc}
& w & i(w) & r(w) & (ir)(w) \\ \hline 
\text{$LR$ words} & LLLR & RRRL  & RLLL & LRRR \\ \hline 
\text{irreducible fractions} & \frac{5}{9} & \frac{4}{9} & \frac{2}{9} & \frac{7}{9} \\ \hline 
\end{array}$$
\par \bigskip 
$(2)$ If $w=LLRR$, then 
$$i(w)=RRLL,\quad r(w)=RRLL,\quad (ir)(w)=LLRR,$$
and the word $w$ corresponds to 
$\frac{7}{10}=\frac{2}{3}\sharp \frac{5}{7}$. Thus
we have the following table:
$$\renewcommand{\arraystretch}{1.7}
\begin{array}{c|cccc}
& w & i(w) & r(w) & (ir)(w) \\ \hline 
\text{$LR$words} & LLRR & RRLL & RRLL & LLRR \\ \hline 
\text{irreducible fractions} & \frac{7}{10} & \frac{3}{10} & \frac{3}{10} & \frac{7}{10} \\ \hline 
\end{array}$$
\end{exam} 

\subsection{Conway-Coxeter friezes of zigzag-type}\label{subsection2-2}
\par 
A Conway-Coxeter frieze (abbreviated by CCF)  is an array of natural numbers, displayed on shifted lines such that the top and bottom lines are composed only of 1s and each unit diamond 
$\begin{array}{ccc}
& b & \\
a & & d \\
& c & 
\end{array}$  
satisfies the determinant condition $ ad -bc =1$, namely 
$\begin{pmatrix}
a & b \\
c & d 
\end{pmatrix} \in \text{SL}(2,{\Bbb Z})$ and $a,b,c,d >0$.

\medskip 
Figure~\ref{fig12} is an example of a Conway-Coxeter frieze. 
This CCF is constructed from the initial condition given in Figure~\ref{fig10}, which consists of a $1$-zigzag line of type $L^2R^2L$ (see Figure~\ref{fig11}). 

\medskip

\begin{figure}[htbp]
\centering 
$\begin{array}{cccccccccccccccccccccccccc}
1 & & 1 & &1  & & 1 & & 1 & & 1 & & 1 & & 1 & &1  & & 1 &&1&&1&&1& \\\hline 
&  & &  & &  & &  & & 1 & &  & &  & &  & &  & &  &&&&&& \\ 
&&  & &  & &  & & 1 & &   & &  & &  & &  & &  & &  &&&&& \\ 
&&&  & &  & & 1 & &  & &  & &  & &  & &  & &  & &  &&&& \\ 
&&&&  & &  & & 1 & &  & &  & &  & &  & &  & &  & &  &&& \\ 
&&&&&  & &  & & 1 & &  & &  & &  & &  & &  & &  & &  && \\ 
&&&&&&  & & 1 & &  & &  & &  & &  & &  & &  & &  & &  & \\\hline 
&1&&1&&1&&1 & & 1 & & 1 & & 1 & & 1 & & 1 & & 1 & & 1 & & 1 & & 1 \\
\end{array}$
\caption{}\label{fig10}
\end{figure}

\begin{figure}[htbp]
\centering
\includegraphics[width=16cm]{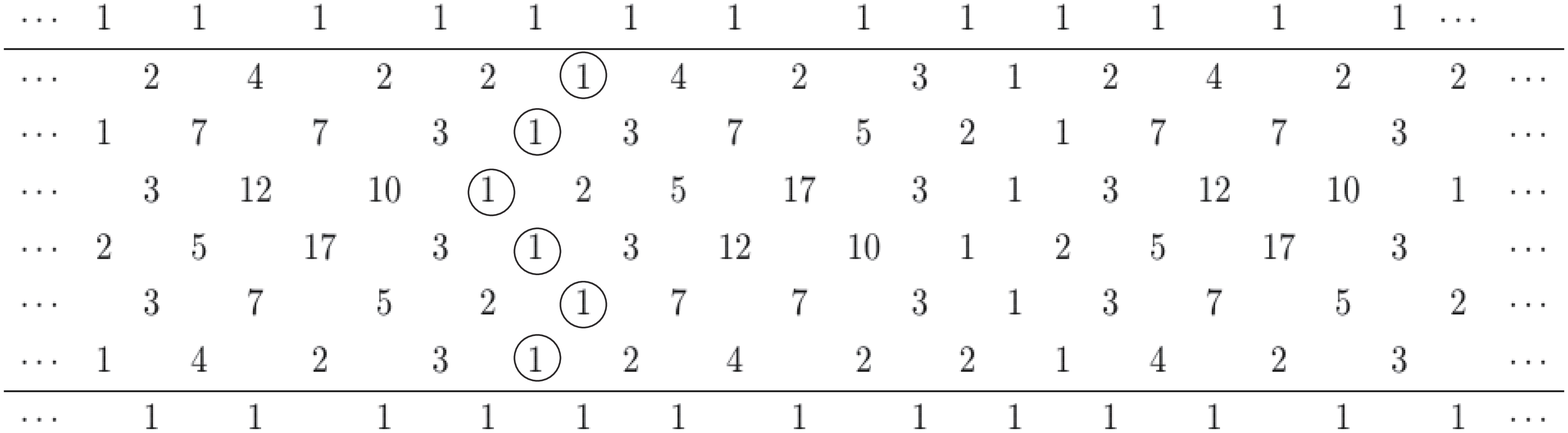}
\caption{}\label{fig11}
\end{figure}

\medskip

\begin{figure}[htbp]
\centering
\scalebox{0.9}[1.05]{
$\begin{array}{c@{\kern0.8em}c@{\kern0.8em}c@{\kern0.8em}c@{\kern0.8em}c@{\kern0.8em}c@{\kern0.8em}c@{\kern0.8em}c@{\kern0.8em}c@{\kern0.8em}c@{\kern0.8em}c@{\kern0.8em}c@{\kern0.8em}c@{\kern0.8em}c@{\kern0.8em}c@{\kern0.8em}c@{\kern0.8em}c@{\kern0.8em}c@{\kern0.8em}c@{\kern0.8em}c@{\kern0.8em}c@{\kern0.8em}c@{\kern0.8em}c@{\kern0.8em}c@{\kern0.8em}c@{\kern0.8em}c@{\kern0.8em}c@{\kern0.8em}c}
\cdots & 1 & & 1 & & 1 & & 1 & & 1 & & 1 & & 1 & & 1 & & 1 & & 1 && 1 && 1 &&1 & \cdots  \\\hline 
\cdots && 2 & & 4 & & 2 & & 2 & & 1 & & 4 & & 2 & & 3 & & 1 & & 2 && 4 && 2 && 2 & \cdots  \\ 
\cdots & 1 && 7 & & 7 & & 3 & & 1 & & 3  & & 7 & & 5 & & 2 & & 1 & & 7 && 7 && 3 && \cdots  \\ 
\cdots && 3 && 12 & & 10 & & 1 & & 2 & & 5 & & 17 & & 3 & & 1 & & 3 & & 12 && 10 && 1 & \cdots  \\ 
\cdots & 2 && 5 && 17 & & 3 & & 1 & & 3 & & 12 & & 10 & & 1 & & 2 & & 5 & & 17 && 3 && \cdots  \\ 
\cdots && 3 &&7 && 5 & & 2 & & 1 & & 7 & & 7 & & 3 & & 1 & & 3 & & 7 & & 5 && 2 & \cdots \\ 
\cdots & 1 && 4 && 2 && 3 & & 1 & & 2 & & 4 & & 2 & & 2 & & 1 & & 4 & & 2 & & 3 && \cdots  \\\hline 
\cdots &&1 &&1 &&1 &&1 & & 1 & & 1 & & 1 & & 1 & & 1 & & 1 & & 1 & & 1 & & 1& \cdots \\
\end{array}$}
\caption{}\label{fig12}
\end{figure}

\medskip 

Does a $1$-zigzag line connecting the ceiling and the floor appear in the CCF?
This is not always true. 
For example, in the CCF given in Figure~\ref{fig13}, a $1$-zigzag line connecting the ceiling and the floor does not  appear.

\begin{figure}[htbp]
\centering
$\begin{array}{cccccccccccccccccccccccccccc}
\cdots & 1 & & 1 & & 1 & & 1 & & 1 & & 1 & & 1 & & 1 & & 1 & & 1 && 1 && 1 &&1 & \cdots  \\\hline 
\cdots&& 1 & & 4 & & 1 & & 2 & & 4 & & 1 & & 2 & & 3 & & 1 & & 4 && 1 && 2 && 4 & \cdots  \\ 
\cdots &&& 3 && 3 & & 1 & & 7 & & 3 & & 1  & & 5 & & 2 & & 3 & & 3 & & 1 && 7 &&  \cdots  \\ 
\cdots && 5 && 2 & & 2 & & 3 & & 5 & & 2 & & 2 & & 3 & & 5 & & 2 & & 2 && 3 && 5 & \cdots  \\ 
\cdots  &&& 3 && 1 && 5 & & 2 & & 3 & & 3 & & 1 & & 7 & & 3 & & 1 & & 5 & & 2 &&  \cdots  \\ 
\cdots && 4 && 1 && 2 && 3 & & 1 & & 4 & & 1 & & 2 & & 4 & & 1 & & 2 & & 3 & & 1 & \cdots  \\\hline 
\cdots &1 &&1 &&1 &&1 & & 1 & & 1 & & 1 & & 1 & & 1 & & 1 & & 1 & & 1 & & 1&& \cdots \\
\end{array}$
\caption{}\label{fig13}
\end{figure}

From now, a CCF where $1$-zigzag appears will be called a {\it CCF of zigzag-type}. 
\medskip

\begin{rem}\label{w2-01}
A CCF is related to a polygon triangulation, and the number of the first row of the CCF corresponds to the number allocated to each vertex of the polygon. 
Where the number of vertices of a polygon is a number obtained by adding $1$ to the number of diagonal lines coming out from the vertex. 
As it can be seen in the following examples (a) and (b), $1$-zigzag does not appear when a triangle with only a diagonal line appears in triangulation of a polygon. 

\begin{enumerate} 
\item[$(a)$] $1$-zigzag appears in CCF.

\includegraphics[width=10cm]{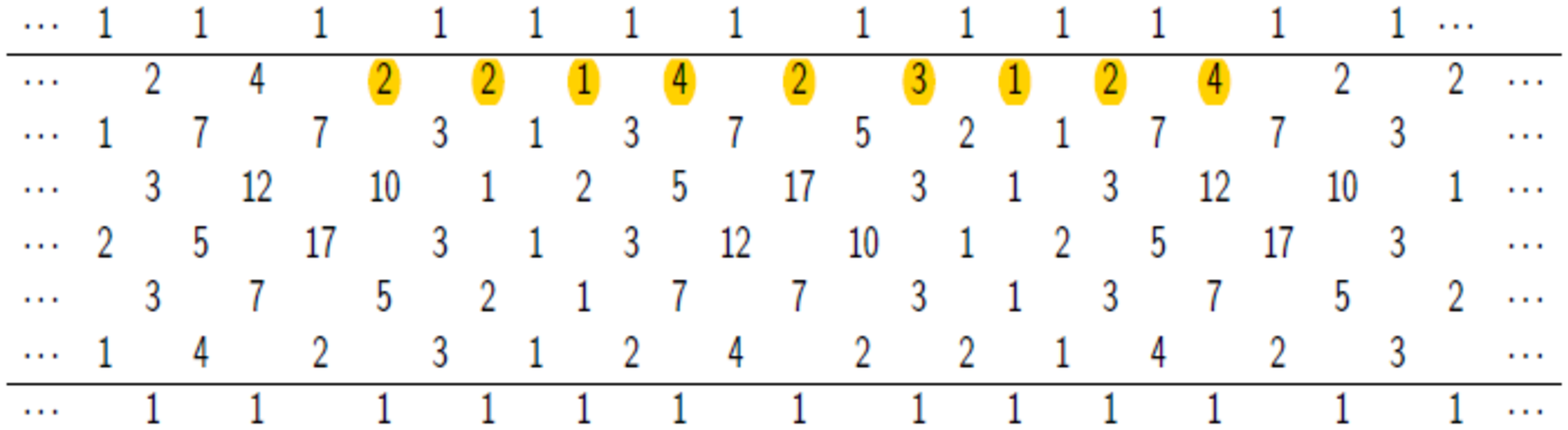} \raisebox{1.5cm}{$~~~\Leftrightarrow~~~$}
\hspace{-0.1cm} \raisebox{-0.1cm}{\includegraphics[width=3.2cm]{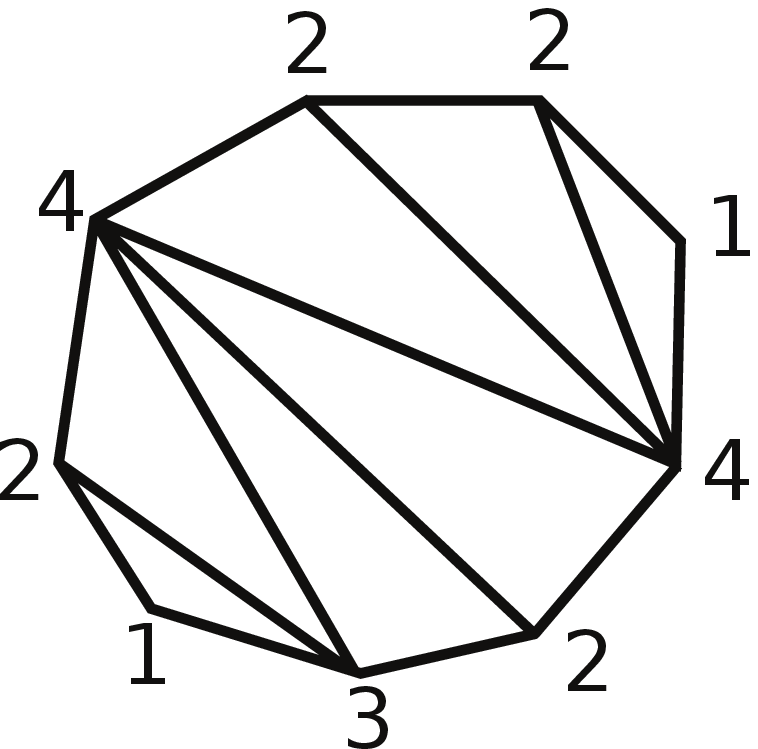}} 

\item[$(b)$] $1$-zigzag does not appear in CCF.

\includegraphics[width=11cm]{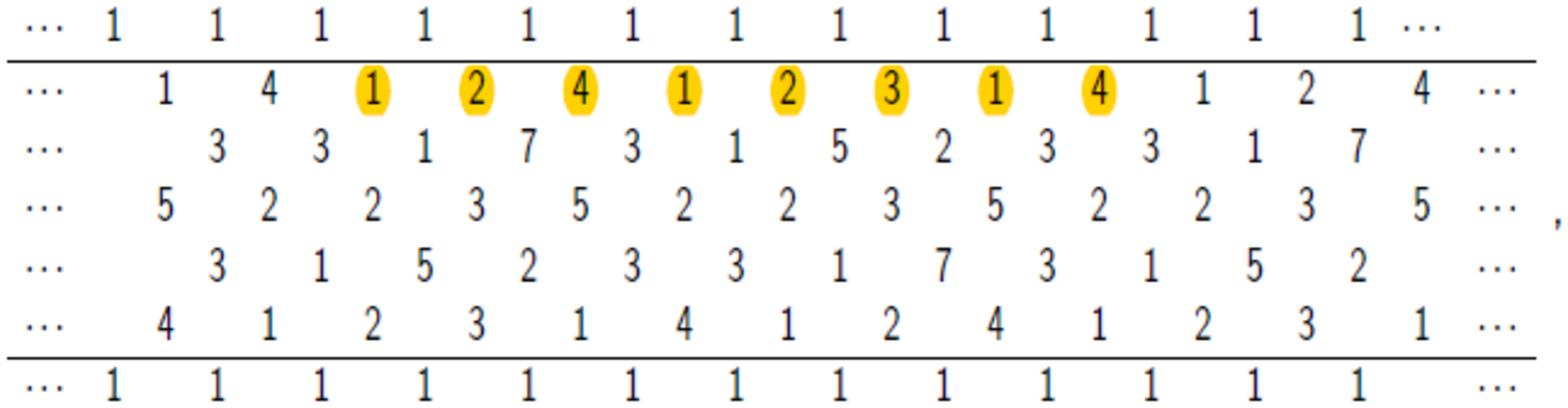} \raisebox{1.3cm}{$\Leftrightarrow~$}
\hspace{-0.1cm} \raisebox{-0.1cm}{\includegraphics[width=3.2cm]{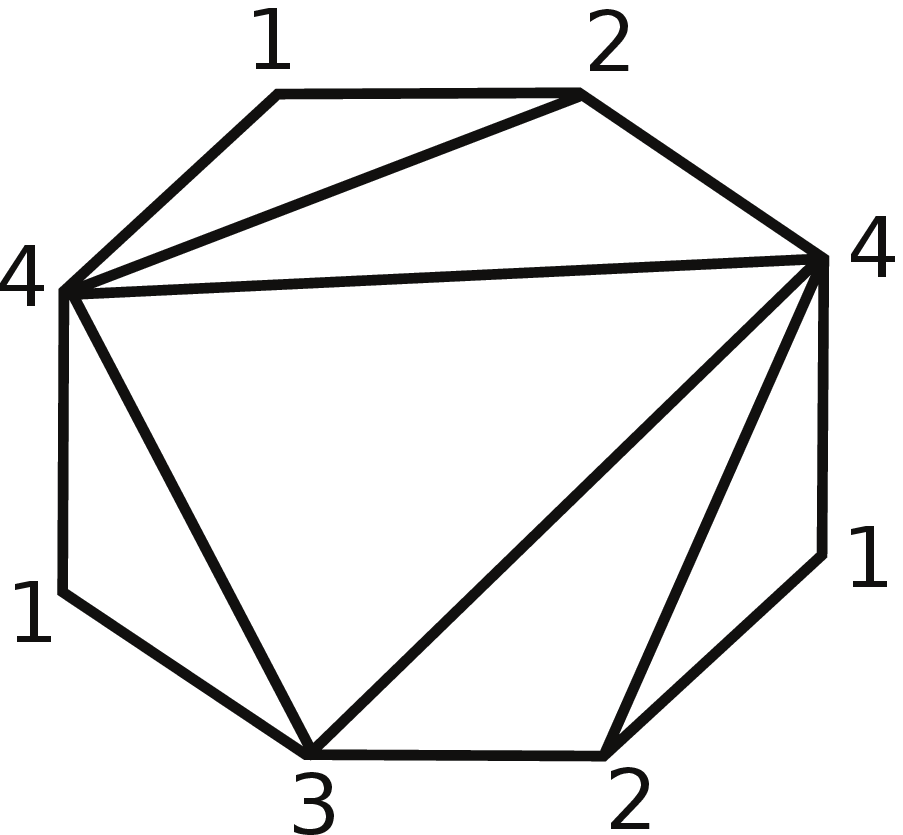}} 

\end{enumerate}
\end{rem}

\subsection{The Kauffman bracket polynomials of Conway-Coxeter friezes}\label{subsection2-3}
\par 
As viewed in the previous subsection any $LR$ word $w$ determines a Conway-Coxeter frieze, which is denoted by $\text{CCF}(w)$ or $\Gamma (w)$. 
However, the converse is not true since it can be also represented as $\text{CCF}(ir(w))$. 
In fact, it is known by Coxeter \cite{Coxeter} that a Conway-Coxeter frieze has a symmetry of glide-refection. 
In other words, it has a fundamental domain under horizontal translations and the reflection with respect to the middle horizontal line (see also \cite{M-G} for this statement).  
\par 
In this subsection we show that the Kauffman bracket polynomial of Conway-Coxeter friezes can be defined up to replacing $A$ with $A^{-1}$, and it has some interesting properties. 
\par 
Any rational number $\alpha $ in the open interval $(0,1)$ has a continued fraction expansion such as 
$\alpha =[0; a_1, a_2, \ldots , a_{n-1}, a_n]$ with an even integer $n$. Then 
we set 
$$ir(\alpha ):=[0; a_n, a_{n-1}, \ldots ,a_1].$$
This operation corresponds just to the operation $ir$ for $LR$ words as shown below.

\par \medskip 
\begin{lem}\label{w2-10}\ 
For a rational number $\alpha \in \mathbb{Q}\cap (0,1)$ we have 
$(ir)\bigl( w(\alpha )\bigr)=w\bigl( (ir)(\alpha )\bigr) $. 
\end{lem}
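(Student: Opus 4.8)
The plan is to prove the identity $(ir)\bigl(w(\alpha)\bigr) = w\bigl((ir)(\alpha)\bigr)$ by induction on the length $\ell\bigl(w(\alpha)\bigr)$, exactly in the style of Lemmas~\ref{w2-5}, \ref{w2-6} and \ref{w2-8}. First I would check the base cases: for $\alpha = \frac{1}{2}$ we have $w(\alpha) = \emptyset$, $(ir)(\emptyset) = \emptyset$, and $(ir)\bigl(\frac{1}{2}\bigr) = \frac{1}{2}$ from the continued fraction $\frac{1}{2} = [0;2]$ (since reversing $[0;2]$ gives back $[0;2]$); for $\alpha = \frac{1}{3} = [0;3]$ and $\alpha = \frac{2}{3} = [0;1,2]$, one verifies directly that $w(\frac{1}{3}) = L$, $w(\frac{2}{3}) = R$, that $(ir)(L) = R$ and $(ir)(R) = L$, and that $(ir)(\frac{1}{3}) = \frac{2}{3}$ while $(ir)(\frac{2}{3}) = [0;2,1] = \frac{1}{3}$.

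For the inductive step, let $\alpha = \frac{x}{y}$ have $\ell\bigl(w(\alpha)\bigr) = d$ and let $\bigl(\frac{p}{q}, \frac{r}{s}\bigr)$ be its pair of parents, so $w(\alpha) = w_1 \vee w_2$ with $w_1 = w\bigl(\frac{p}{q}\bigr)$, $w_2 = w\bigl(\frac{r}{s}\bigr)$. The operation $ir$ reverses and swaps letters, so $(ir)(w_1 \vee w_2)$ should be computed in terms of $(ir)(w_1)$ and $(ir)(w_2)$; the key combinatorial fact is that $ir$ interacts with $\vee$ in a controlled way. Concretely, since $\vee$ prepends a single letter ($L$ or $R$, depending on which of $\ell(w_1), \ell(w_2)$ is larger, by Lemma~\ref{w2-2}), applying $r$ turns the prepended letter into an appended one, and then $i$ flips it; so $(ir)(w_1 \vee w_2)$ equals $(ir)(w_2) \star$ or $(ir)(w_1) \star$ with $\star$ a single appended letter. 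The plan is to identify this appended-letter operation with right multiplication handled by Lemma~\ref{w2-6}, and to use Lemma~\ref{w2-8} (which already computes $(ir)(w)$ as $w\bigl(\frac{s}{y}\bigr)$, the "reversal" of the relevant parent data) together with Lemma~\ref{w2-5} to rewrite everything on the fraction side. In parallel, on the continued-fraction side, $(ir)(\alpha)$ is obtained by reversing $[0; a_1, \ldots, a_n]$; I would track how the parent decomposition of $\alpha$ translates, under reversal of the continued fraction, into the parent decomposition of $(ir)(\alpha)$, and match this against the recursion for $(ir)\bigl(w(\alpha)\bigr)$ coming from Lemma~\ref{w2-6}.

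An alternative, possibly cleaner route is to combine the already-proved lemmas directly: Lemma~\ref{w2-8} gives $r\bigl(w(\alpha)\bigr) = w\bigl(\frac{q}{y}\bigr)$ and $(ir)\bigl(w(\alpha)\bigr) = w\bigl(\frac{s}{y}\bigr)$, so the task reduces to showing that the fraction $\frac{s}{y}$ — where $s$ is the denominator of the "upper" parent of $\alpha$ — is precisely $(ir)(\alpha)$ as defined via reversing the continued fraction. That is a purely number-theoretic statement about continued fractions: if $\frac{x}{y} = [0; a_1, \ldots, a_n]$ with $n$ even and $\bigl(\frac{p}{q}, \frac{r}{s}\bigr)$ are its parents, then $\frac{s}{y} = [0; a_n, \ldots, a_1]$. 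This can be proved by the standard matrix-product description of continued fractions (writing $\begin{pmatrix} x & r \\ y & s \end{pmatrix}$ as a product $\begin{pmatrix} 0 & 1 \\ 1 & a_1 \end{pmatrix} \cdots \begin{pmatrix} 0 & 1 \\ 1 & a_n \end{pmatrix}$ up to the usual conventions) and observing that transposing the product reverses the order of factors, which swaps the roles of the off-diagonal entries and yields the reversed continued fraction.

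The main obstacle I anticipate is bookkeeping: keeping the parities straight (the definition of $(ir)(\alpha)$ fixes $n$ even, and the parent structure of a fraction with an even-length expansion needs care), and correctly matching which parent's denominator appears — there is a genuine risk of an off-by-one or a swap of $\frac{p}{q}$ with $\frac{r}{s}$. Using the matrix formulation for the continued-fraction side is what I'd rely on to make the parity and parent identifications rigorous rather than ad hoc; the inductive $\vee$-manipulation, while conceptually transparent, is where sign/letter errors are easiest to make and will need the most careful writing.
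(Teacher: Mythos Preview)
Your second approach is correct and genuinely different from the paper's proof. The paper argues by induction on right-appending: assuming the identity for $w$, it establishes it for $wR$ and $wL$ by invoking Corollary~\ref{w2-7}, which supplies explicit continued-fraction expressions for $w$, $wR$, $wL$ in terms of the expansion of $i(w)$, and then identifies the resulting words via Lemma~\ref{w2-2} and Remark~\ref{w2-3}. Your alternative route bypasses this induction entirely: Lemma~\ref{w2-8} already gives $(ir)\bigl(w(\tfrac{x}{y})\bigr)=w(\tfrac{s}{y})$ with $s$ the denominator of the larger parent, so everything reduces to the classical identity $\tfrac{q_{n-1}}{q_n}=[0;a_n,\ldots,a_1]$ for continued-fraction denominators (observing that for $n$ even the $(n{-}1)$-st convergent is indeed the larger parent), which follows by transposing the product $\prod_k\begin{pmatrix}0&1\\1&a_k\end{pmatrix}$. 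This is shorter and leans on a standard number-theoretic fact; the paper's argument stays entirely within the $LR$-word calculus it has built up. Your first approach, via the $\vee$-decomposition, is workable but closer in spirit to the paper's---both are length inductions tracking a single-letter extension---and the paper's choice of right-appending meshes directly with Corollary~\ref{w2-7}, sidestepping some of the parity bookkeeping you flag. One small slip: the even-length expansion of $\tfrac{1}{3}$ needed for the definition of $(ir)$ is $[0;2,1]$, not $[0;3]$, though your conclusion $(ir)(\tfrac{1}{3})=\tfrac{2}{3}$ is correct.
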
 
\begin{proof}
The equation in the lemma holds for $\alpha =\frac{1}{2}$ since $(ir)\bigl( w(\alpha )\bigr)=\emptyset =w\bigl(\frac{1}{2}\bigr) =w\bigl( (ir)(\alpha )\bigr)$. 
\par 
Assume that the equation in the lemma holds for an $LR$ word $w=w\bigl( \frac{p}{q}\bigr)$. 
\par 
First let us consider $wR$. 
We write as 
$i(w)=w([0; a_1, \ldots , a_m])$ for some odd integer $m$. 
By Corollary~\ref{w2-7} then 
$wR=w([0; 1, a_1, \ldots , a_m])$, and $w=w([0; 1, a_1-1, a_2, \ldots , a_m])$. 
Therefore 
$$(ir)(wR)=L(ir)(w)
=Lw([0; a_m, \ldots , a_2, a_1-1, 1])
=Lw([0; a_m, \ldots , a_2, a_1]).$$
Since $m$ is even, 
$$[0; a_m, \ldots , a_1,1]=[0; a_m, \ldots , a_2, a_1+1]
=[0; a_m, \ldots , a_2]\sharp [0; a_m, \ldots , a_1].$$
Since 
$\ell \bigl( w([0; a_m, \ldots , a_2])\bigr)<\ell \bigl( w([0; a_m, \ldots , a_1])\bigr)$ by Remark~\ref{w2-3}, 
it follows from Lemma~\ref{w2-2} that $Lw([0; a_m, \ldots , a_2, a_1])=w([0; a_m, \ldots , a_1, 1])$. 
Thus $(ir)(wR)=w([0; a_m, \ldots , a_1, 1])$. 
\par 
Next, let us consider $wL$, and write 
$i(w)=w([0; a_1, \ldots , a_m])$ for some odd integer $m$. 
By Corollary~\ref{w2-7} we have 
$wL=w([0; 2, a_1-1, a_2, \ldots , a_m])$, and 
$(ir)(wL)=R(ir)(w)
=Rw([0; a_m, \ldots , a_2, a_1-1, 1])$. 
Since $m$ is odd, 
$$[0; a_m, \ldots , a_1-1, 2]
=[0; a_m, \ldots , a_1-1, 1]\sharp [0; a_m, \ldots , a_1-1].$$
It follows from Lemma~\ref{w2-2} and Remark~\ref{w2-3} that 
$Rw([0; a_m, \ldots , a_2, a_1-1, 1])=w([0; a_m, \ldots , a_1-1,2])$. 
Thus $(ir)(wL)=w([0; a_m, \ldots , a_1-1,2])$. 
\end{proof}

\par \medskip 
\begin{thm}\label{w2-11}
\par 
For a rational number $\alpha \in \mathbb{Q}\cap (0,1)$ 
$$v(\phi (ir(\alpha )))=\overline{v(\phi (\alpha ))}.$$
\end{thm}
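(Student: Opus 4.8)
The plan is to prove the identity by induction on the length of the $LR$ word $w(\alpha)$, mirroring the structure of the proofs of Lemmas~\ref{w2-5}, \ref{w2-6} and \ref{w2-10}. The base case is $\alpha=\frac12$, where $ir(\tfrac12)=\tfrac12$ and $v(\phi(\tfrac12))$ is a specific element of $\Lambda$; one checks directly that it is invariant (up to the bar) — in fact $v(\phi(\tfrac12))=v(\phi(ir(\tfrac12)))$ literally, and the bar fixes it because one verifies by hand that this particular Laurent polynomial is symmetric under $A\mapsto A^{-1}$, or more honestly one checks the first few cases $\frac13,\frac23,\frac14,\ldots$ to see the bar genuinely intervenes. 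The key point will be to recognize the behavior of the bar $\overline{(-)}$ under the recursion \eqref{eqw1-15}.

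The main idea is this: by Theorem~\ref{w1-8}, $\phi$ satisfies $\phi(x)=-A^4\phi(y)-A^{-4}\phi(z)$ where $(y,z)$ are the parents of $x$; applying $v$ gives \eqref{eqw1-16}, $v(\phi(x))=-A^4 v(\phi(y))-A^{-4}v(\phi(z))$. Applying the bar to this relation yields $\overline{v(\phi(x))}=-A^{-4}\,\overline{v(\phi(y))}-A^{4}\,\overline{v(\phi(z))}$, i.e. the bar swaps the roles of the two coefficients $-A^4$ and $-A^{-4}$. On the other hand, by Lemma~\ref{w2-10} the operation $ir$ on rational numbers in $(0,1)$ corresponds to $ir$ on $LR$ words; and from Lemma~\ref{w2-5}(2) together with Lemma~\ref{w2-8} (or directly from the $\vee$-description of $w$), the operation $ir$ reverses the left/right structure of the descending path in the Stern-Brocot tree, so that if $x=y\sharp z$ with $w(y)$ shorter than $w(z)$ (so $w(x)=Lw(z)$) then $ir(x)$ has parents $(ir(z),ir(y))$ in the appropriate order. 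Thus $v(\phi(ir(x)))=-A^4 v(\phi(ir(z)))-A^{-4}v(\phi(ir(y)))$, and feeding in the induction hypothesis $v(\phi(ir(y)))=\overline{v(\phi(y))}$, $v(\phi(ir(z)))=\overline{v(\phi(z))}$ gives exactly $\overline{v(\phi(x))}$ by the swapped recursion above. Care is needed that $\infty$ or $0$ can appear as a parent only at the very top, which is handled by the base-case bookkeeping, since $v(\phi(0))$ and $v(\phi(\infty))$ are explicit and one tracks them separately.

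The step I expect to be the main obstacle is establishing cleanly how $ir$ acts on the parent structure — specifically, showing that the ordered pair of parents of $ir(\alpha)$ is $(ir(z), ir(y))$ when $(y,z)$ are the ordered parents of $\alpha$, including the matching of path lengths so that the $\vee$ operation produces the correct word. This is essentially a combinatorial statement about the Stern-Brocot tree restricted to $(0,1)$; the cleanest route is probably to phrase everything in terms of $LR$ words using $w\bigl(\tfrac{p}{q}\sharp\tfrac{r}{s}\bigr)=w\vee w'$ from Lemma~\ref{w2-2}, the identity $i(w\vee w')=i(w')\vee i(w)$ from Lemma~\ref{w2-5}(2), and the fact that $r$ reverses words, so that $(ir)(w\vee w') = (ir)(w)\, {}^{\vee'}\, (ir)(w')$ for the appropriately oriented join, and then transport this back to fractions via Lemmas~\ref{w2-8} and \ref{w2-10}. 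Once that correspondence is nailed down, the rest is the two-line bar computation on \eqref{eqw1-16} together with the induction hypothesis. A secondary nuisance is verifying the base cases $\alpha=\tfrac13,\tfrac23$ (where $ir$ is a genuine swap $\tfrac13\leftrightarrow$ something) are consistent; these follow from the explicit formula for $v(\phi)$ on small fractions computed in Examples~\ref{w1-11} and \ref{w1-12}.
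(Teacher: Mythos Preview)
Your inductive scheme hinges on the claim that if $(y,z)$ are the parents of $x\in\mathbb{Q}\cap(0,1)$ then the parents of $ir(x)$ are $ir(z)$ and $ir(y)$. This is false. Take $x=\tfrac{3}{8}$ with parents $(y,z)=(\tfrac{1}{3},\tfrac{2}{5})$; here $\ell(w(y))=1<2=\ell(w(z))$, so this is exactly the case you describe. One computes $ir(\tfrac{3}{8})=\tfrac{5}{8}$ (via $\tfrac{3}{8}=[0;2,1,1,1]$, so $ir(\tfrac{3}{8})=[0;1,1,1,2]=\tfrac{5}{8}$), while $ir(y)=ir(\tfrac{1}{3})=\tfrac{2}{3}$ and $ir(z)=ir(\tfrac{2}{5})=\tfrac{2}{5}$. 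But the parents of $\tfrac{5}{8}$ are $(\tfrac{3}{5},\tfrac{2}{3})$, not $\{\tfrac{2}{5},\tfrac{2}{3}\}$. The operation $ir$ reverses the continued fraction $[0;a_1,\ldots,a_n]\mapsto[0;a_n,\ldots,a_1]$, and this simply does not intertwine with the mediant/parent structure of the Stern--Brocot tree; Lemma~\ref{w2-5}(2) gives $i(w\vee w')=i(w')\vee i(w)$, but there is no analogous formula for $r$ or for $ir$ at the level of $\vee$. So the induction you propose cannot close: the value $v(\phi(ir(x)))$ is governed by a recursion whose inputs are not $v(\phi(ir(y)))$ and $v(\phi(ir(z)))$.

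The paper's proof is entirely different and avoids this obstacle by going through topology. It writes $\alpha=[0;a_1,\ldots,a_n]$ with $n$ even, applies Theorem~\ref{w1-10} to express both $v(\phi(\alpha))$ and $v(\phi(ir(\alpha)))$ as explicit powers of $(-A^3)$ times Kauffman brackets of the link diagrams $D(T(\tfrac{p}{q}))$ and $D(T(\tfrac{p'}{q'}))$, and then checks by a direct regular-isotopy argument on the tangle pictures that $D(T(\tfrac{p'}{q'}))$ is the mirror $D(-T(\tfrac{p}{q}))$, whence its bracket is the bar of the other. The inductive argument you have in mind \emph{does} work for $i$ alone, and indeed that is exactly how the paper proves the companion Theorem~\ref{w2-15}; but Theorem~\ref{w2-11} for $ir$ is obtained topologically, and in the paper's logic the $r$-invariance is deduced \emph{afterwards} from Theorems~\ref{w2-11} and \ref{w2-15} combined (see the Remark following Theorem~\ref{w2-15}), not the other way around.
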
 
\begin{proof}
Write in the form $\alpha =\frac{p}{q}$ as an irreducible fraction, and expand it as a continued fraction: 
$\frac{p}{q}=[0; a_1, \ldots , a_{n-1}, a_n]$, where $n$ is even, and 
$a_1, \ldots , a_n$ are positive integers. 
By Theorem~\ref{w1-10} and \eqref{eqw1-7} then 
$$v\bigl(\phi (\alpha )\bigr) =(-A^3)^{-a_1+\cdots -a_{n-1}+a_n}\Bigl\langle D\Bigl( T\Bigl(\dfrac{p}{q}\Bigr) \Bigr)\Bigr\rangle .$$
On the other hand, setting 
$ir(\alpha )=[0; a_n, a_{n-1}, \ldots ,a_1]=\frac{p^{\prime}}{q^{\prime}}$, we have 
$$v\bigl(\phi (ir(\alpha ))\bigr) =(-A^3)^{-a_n+\cdots -a_2+a_1}\Bigl\langle D\Bigl( T\Bigl(\dfrac{p^{\prime}}{q^{\prime}}\Bigr) \Bigr)\Bigr\rangle .$$
Since $n$ is even and 
$\frac{p^{\prime}}{q^{\prime}}=[0; a_n, a_{n-1}, \ldots ,a_1]$, we have  
$$T\Bigl( \dfrac{p^{\prime}}{q^{\prime}}\Bigr) =\ \raisebox{-1.5cm}{\includegraphics[height=3cm]{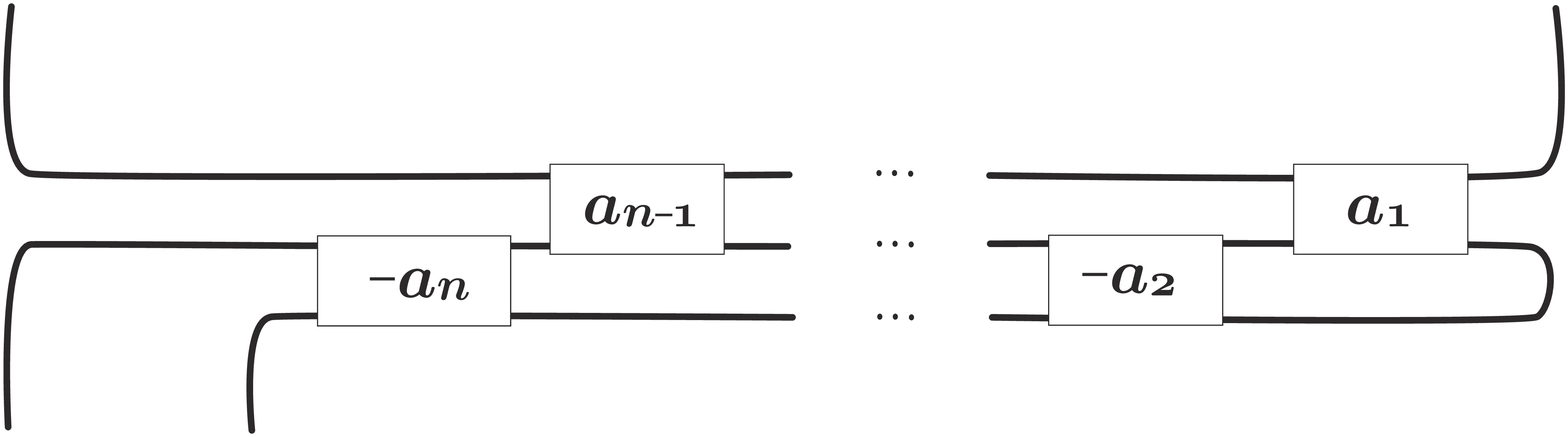}}\ .$$
Hence 
\begin{align*}
D\Bigl( T\Bigl(\dfrac{p^{\prime}}{q^{\prime}}\Bigr) \Bigr)
&=\ \ \raisebox{-0.5cm}{\includegraphics[height=1.5cm]{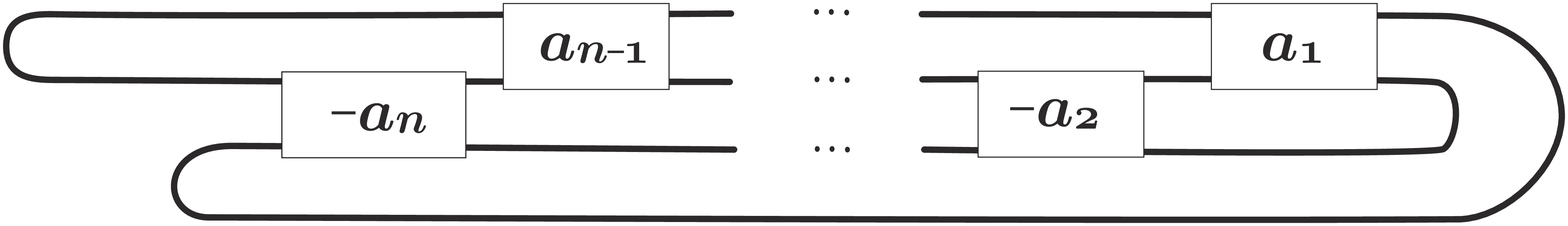}} \\[0.7cm]  
&\sim \ \ \raisebox{-0.7cm}{\includegraphics[height=1.5cm]{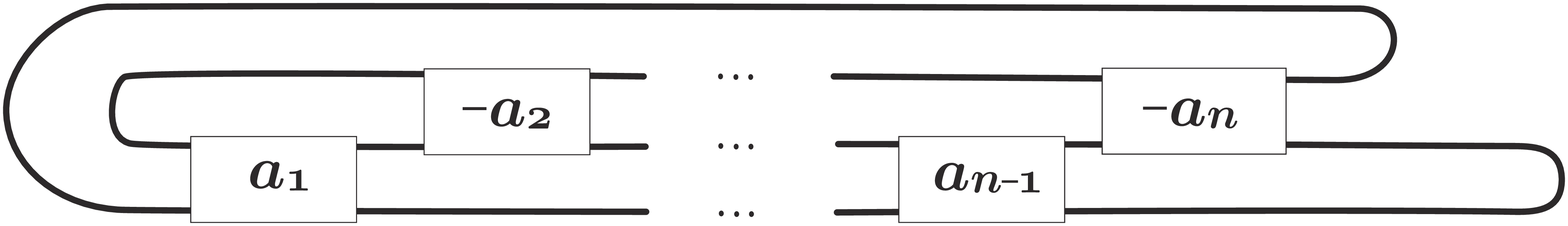}} \\[0.3cm]   
&\sim \ \raisebox{-1.0cm}{\includegraphics[height=2.0cm]{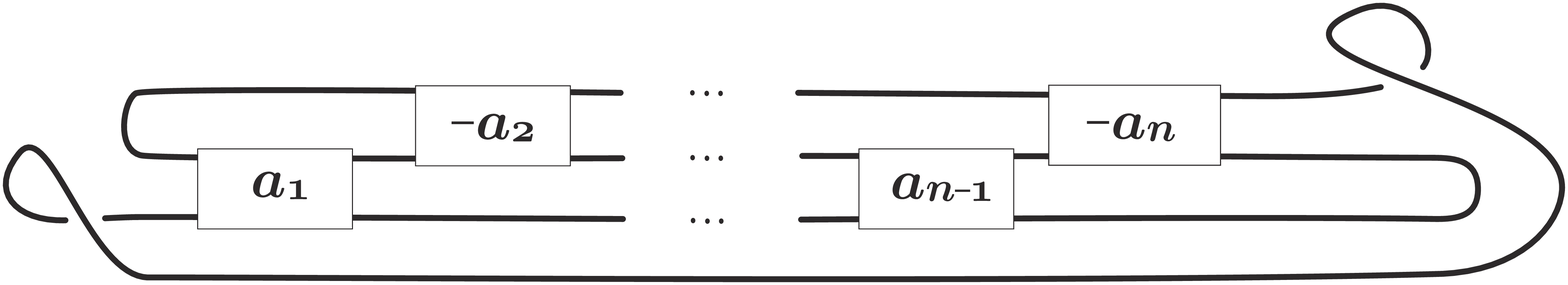}} . 
\end{align*}
Here $\sim$ means what they are regular isotopic. 
Therefore,  
\begin{align*}
\Bigl\langle D\Bigl( T\Bigl(\dfrac{p^{\prime}}{q^{\prime}}\Bigr) \Bigr)\Bigr\rangle 
&=\ \Biggl\langle \ \raisebox{-0.6cm}{\includegraphics[height=1.5cm]{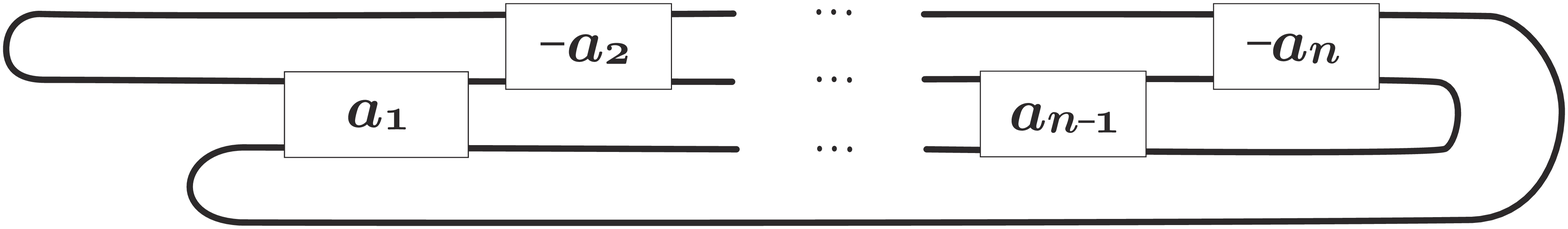}}\ \ \Biggr\rangle \displaybreak[0]\\[0.3cm]  
&=\Bigl\langle D\Bigl( -T\Bigl(\dfrac{p}{q}\Bigr) \Bigr)\Bigr\rangle \displaybreak[0]\\[0.3cm]  
&=\overline{\Bigl\langle D\Bigl( T\Bigl(\dfrac{p}{q}\Bigr) \Bigr)\Bigr\rangle } .
\end{align*}
This shows that 
$v\bigl(\phi (ir(\alpha ))\bigr) =\overline{(-A^3)^{a_n-\cdots +a_2-a_1}\bigl\langle D\bigl( T(\frac{p}{q}) \bigr)\bigr\rangle }
=\overline{v\bigl(\phi (\alpha )\bigr) }$. 
\end{proof}

\par \smallskip 
\begin{defn}
For a Conway-Coxeter frieze $\Gamma $ of zigzag-type we write $\Gamma =\text{CCF}(w)$ by using an $LR$ word $w$. 
Let $\alpha $ be the corresponding rational number of $w$, and define 
\begin{equation}
\langle \Gamma \rangle :=v(\phi (\alpha )).
\end{equation}
We call it the Kauffman bracket polynomial of the Conway-Coxeter frieze $\Gamma $. 
By Theorem~\ref{w2-11} it is well-defined up to replacing $A$ with $A^{-1}$. 
\end{defn} 

\par \medskip 
One can inductively compute the Kauffman bracket polynomial  $\langle \Gamma (w)\rangle $ of a Conway-Coxeter frieze $\Gamma (w)$ with a given word $w$ as the following theorem. 

\par \medskip 
\begin{thm}\label{w2-13}
$$\langle \Gamma (\emptyset )\rangle =-A^4-A^{-4},\quad 
\langle \Gamma (L)\rangle =-A^4+1+A^{-8},\quad 
\langle \Gamma (R)\rangle =A^8+1-A^{-4},$$ 
and for $LR$ words $w, w^{\prime}$ such that these corresponding rational numbers are Farey neighbors, 
\begin{equation}\label{eqw2-3}
\langle \Gamma (w\vee w^{\prime})\rangle =-A^4\langle \Gamma (w)\rangle -A^{-4}\langle \Gamma (w^{\prime})\rangle . 
\end{equation}
In particular, for a positive integer $k$ and an $LR$ word $w$ 
\begin{equation}\label{eqw2-4}
\begin{aligned}
\langle \Gamma (R^kLw)\rangle &=-A^4\langle \Gamma (R^{k-1}Lw)\rangle -A^{-4}\langle \Gamma (w)\rangle ,\\ 
\langle \Gamma (L^kRw)\rangle &=-A^4\langle \Gamma (w)\rangle -A^{-4}\langle \Gamma (L^{k-1}Rw)\rangle .
\end{aligned}
\end{equation}
\end{thm}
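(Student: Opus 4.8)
The plan is to reduce the whole statement to the recursive equation \eqref{eqw1-15} for $\phi$ (Theorem~\ref{w1-8}) together with the linearity of $v$. First I would establish the three base cases by direct computation. Since $w(\emptyset)$ corresponds to $\frac{1}{2}=\frac{0}{1}\sharp\frac{1}{1}$, $w(L)$ to $\frac{1}{3}=\frac{0}{1}\sharp\frac{1}{2}$, and $w(R)$ to $\frac{2}{3}=\frac{1}{2}\sharp\frac{1}{1}$, I can compute $\phi(\frac12)$, $\phi(\frac13)$, $\phi(\frac23)$ from \eqref{eqw1-14} and \eqref{eqw1-15} (e.g.\ $\phi(\tfrac12)=-A^4\phi(0)-A^{-4}\phi(\infty)=-A^4[0]-A^{-4}A^6[\infty]$), then apply $v$ via \eqref{eqw1-4} to read off $\langle\Gamma(\emptyset)\rangle=v(\phi(\tfrac12))$, and similarly for $L,R$; matching these against $-A^4-A^{-4}$, $-A^4+1+A^{-8}$, $A^8+1-A^{-4}$ is a short calculation.

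Next, for the main recursion \eqref{eqw2-3}, the key observation is that if $w=w(\frac{p}{q})$ and $w'=w(\frac{r}{s})$ are $LR$ words whose associated fractions are Farey neighbors, then by Lemma~\ref{w2-2} the word $w\vee w'$ is precisely the $LR$ word associated to the Farey sum $\frac{p}{q}\sharp\frac{r}{s}=\frac{p+r}{q+s}$, and by Lemma~\ref{w1-5}(3) the pair $(\frac pq,\frac rs)$ is exactly the pair of parents of that Farey sum. Therefore, writing $\alpha,\beta,\gamma$ for the rational numbers of $w\vee w'$, $w$, $w'$ respectively, we have that $(\beta,\gamma)$ is the pair of parents of $\alpha$, so \eqref{eqw1-16} (the image under $v$ of the recursion \eqref{eqw1-15}) gives $v(\phi(\alpha))=-A^4 v(\phi(\beta))-A^{-4}v(\phi(\gamma))$. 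Unwinding the definition $\langle\Gamma(w)\rangle=v(\phi(\cdot))$ yields \eqref{eqw2-3} immediately. One subtlety I would address: the rational numbers here lie in $(0,1)$, so I must check the sum $\frac{p+r}{q+s}$ is still in $(0,1)$ and that all three numbers are in the domain $\mathbb{Q}_+\cup\{\infty\}$ of $\phi$ where \eqref{eqw1-15} applies — this holds since Farey neighbors in $(0,1)$ have Farey sum in $(0,1)$, and the parents of a fraction in $(0,1)$ are the ones appearing as the $(\frac pq,\frac rs)$ with $rq-ps=1$.

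For the two specialized formulas \eqref{eqw2-4}, I would argue that they are instances of \eqref{eqw2-3} obtained by identifying the relevant $\vee$-decompositions. For $R^kLw$: I claim that if $u$ is the $LR$ word with $w(\cdot)$ a given fraction, then $R^kLw$ decomposes as $(R^{k-1}Lw)\vee w$ in the sense of Lemma~\ref{w2-2}, i.e.\ the fraction of $R^{k-1}Lw$ and the fraction of $w$ are Farey neighbors with the correct length inequality ($\ell(R^{k-1}Lw)>\ell(w)$, forcing the ``$Rw$'' branch of \eqref{eqw2-1}); iterating Lemma~\ref{w2-6} (which describes how appending $L$ or $R$ transforms the fraction) shows $R^{k-1}Lw$ and $w$ are indeed Farey neighbors. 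Symmetrically, $L^kRw=w\vee(L^{k-1}Rw)$ with $\ell(w)<\ell(L^{k-1}Rw)$, giving the ``$Lw'$'' branch. Then \eqref{eqw2-3} applied to these decompositions yields \eqref{eqw2-4} directly.

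The main obstacle I anticipate is the bookkeeping in the last step: verifying cleanly that $R^{k-1}Lw$ and $w$ have Farey-neighbor fractions and that the length inequality comes out on the correct side so that $w\vee w'$ genuinely equals $R^kLw$ (and not, say, $w$ prepended to something). This requires a careful induction on $k$ using Lemma~\ref{w2-6} to track the parents at each step, and attention to the convention (fixed in Subsection~\ref{subsection2-1}) that $LR$ words are recorded in reversed order relative to the downward path from $\frac12$. The base cases and the core application of \eqref{eqw1-16} are routine; it is this combinatorial identification of the $\vee$-factorizations that needs the most care.
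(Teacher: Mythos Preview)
Your approach is essentially the paper's own: derive \eqref{eqw2-3} directly from Lemma~\ref{w2-2} and \eqref{eqw1-16}, then specialize to obtain \eqref{eqw2-4} by identifying the parents of $R^kLw$ and $L^kRw$ in the Stern-Brocot tree. Two small slips to fix. First, in your base-case example the parents of $\tfrac12$ are $(0,1)$, not $(0,\infty)$, so one needs $\phi(\tfrac12)=-A^4\phi(0)-A^{-4}\phi(1)$; your own decomposition $\tfrac12=\tfrac01\sharp\tfrac11$ already says this, and with $\phi(1)$ in place the value $-A^4-A^{-4}$ comes out correctly. Second, Lemma~\ref{w2-6} concerns \emph{appending} $L$ or $R$ on the right, so it is not the tool for analyzing prepended letters; the Farey-neighbor claim for $R^{k-1}Lw$ and $w$ follows instead by iterating Lemma~\ref{w2-2} itself (each prepended letter passes to a child in the tree, and along a run of $R$'s the right parent stays fixed at the fraction of $w$), which is exactly the paper's one-line appeal to ``the method of construction of the Stern-Brocot tree.''
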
 
\begin{proof}
The equation \eqref{eqw2-3} is a direct consequence of Lemma~\ref{w2-2} and \eqref{eqw1-16}. 
By the method of construction of the Stern-Brocot tree 
the rational numbers corresponding to $R^{k-1}Lw$ and $w$ are Farey neighbors, and their Farey sum corresponds to $R^kLw$. 
So, by applying \eqref{eqw2-3} the first equation of \eqref{eqw2-4} is obtained. 
The second equation can be verified similarly. 
\end{proof}

\par 
Given a Conway-Coxeter frieze  $\Gamma $, we have a new Conway-Coxeter frieze by applying a vertical reflection. 
The resulting Conway-Coxeter frieze is denoted by $\bot (\Gamma )$. 
For example, if $w=L^3R$, then $\bot (\Gamma )$ is the Conway-Coxeter frieze depicted as in the following: 
$$\raisebox{1.7cm}{$\bot (\Gamma )$\ \ : }\ \ \includegraphics[width=13.5cm]{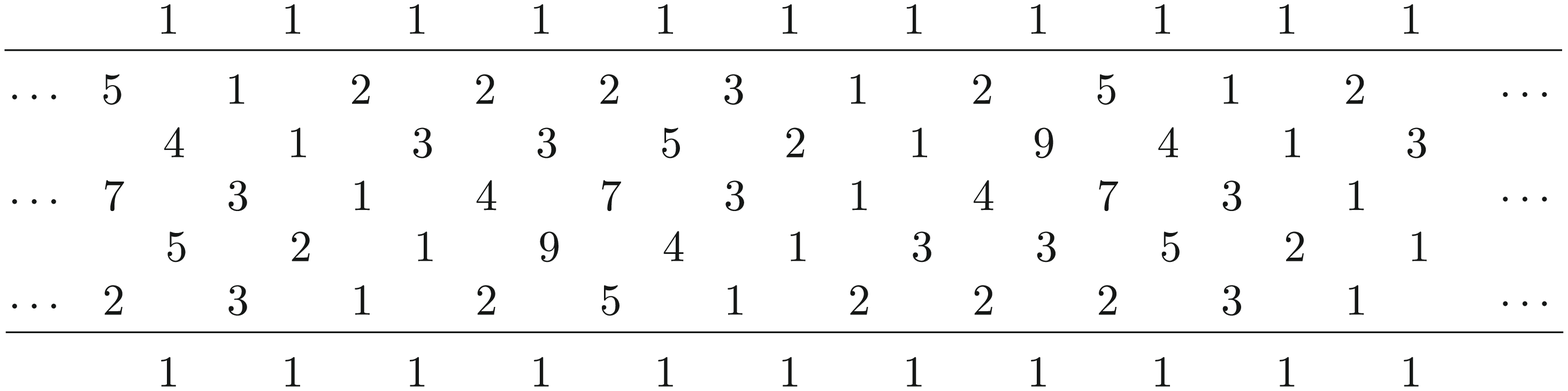}$$

We note that the Conway-Coxeter frieze $\bot (\Gamma )$ coincides with $\text{CCF}(i(w))$ if $\Gamma =\text{CCF}(w)$ for some $LR$ word $w$. 

\par \medskip 
\begin{thm}\label{w2-15}
For a Conway-Coxeter frieze $\Gamma $
$$\langle \textrm{$\bot $}(\Gamma )\rangle \dot{=}\langle \Gamma \rangle ,$$
where $\dot{=}$ means what is equal up to replacing $A$ with $A^{-1}$. 
\end{thm}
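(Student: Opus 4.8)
The plan is to reduce the statement to the already-established identity $v(\phi(ir(\alpha))) = \overline{v(\phi(\alpha))}$ of Theorem~\ref{w2-11}, combined with the observation recorded just before the theorem that $\bot(\Gamma) = \text{CCF}(i(w))$ when $\Gamma = \text{CCF}(w)$. First I would write $\Gamma = \text{CCF}(w)$ for some $LR$ word $w$, let $\alpha \in \mathbb{Q}\cap(0,1)$ be the rational number corresponding to $w$, so that by definition $\langle \Gamma\rangle = v(\phi(\alpha))$. Then $\bot(\Gamma) = \text{CCF}(i(w))$, and by definition $\langle \bot(\Gamma)\rangle = v(\phi(\beta))$, where $\beta$ is the rational number corresponding to the $LR$ word $i(w)$.

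The core of the argument is to identify $\beta$ in a way that connects it to $\alpha$ via the operation $ir$. By Lemma~\ref{w2-5}(1), $i(w) = w\bigl(\frac{q-p}{q}\bigr)$ if $\alpha = \frac{p}{q}$, so $\beta = \frac{q-p}{q} = 1 - \alpha$. The remaining task is therefore to relate $v(\phi(1-\alpha))$ to $v(\phi(\alpha))$, and the natural route is to go through $ir(\alpha)$: I would check — or cite from the continued-fraction bookkeeping already in the paper — that $w(1-\alpha) = i(w(\alpha))$ while $w(ir(\alpha)) = ir(w(\alpha))$ by Lemma~\ref{w2-10}, and that since $\langle\Gamma\rangle$ is only defined up to $A \leftrightarrow A^{-1}$ anyway (Theorem~\ref{w2-11} guarantees well-definedness in exactly this sense), it suffices to show $v(\phi(1-\alpha)) \dot{=} v(\phi(\alpha))$. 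Concretely, $1-\alpha$ and $ir(\alpha)$ need not be equal, but the value $v(\phi(-))$ is a regular-isotopy-type invariant of the denominator closure $D(T(-))$, and $D(T(1-\alpha))$ and $D(T(ir(\alpha)))$ are related by the same kind of planar rotation/reflection moves used in the proof of Theorem~\ref{w2-11}, so $v(\phi(1-\alpha)) = v(\phi(ir(\alpha)))$. Chaining this with Theorem~\ref{w2-11} gives $\langle\bot(\Gamma)\rangle = v(\phi(1-\alpha)) = v(\phi(ir(\alpha))) = \overline{v(\phi(\alpha))} = \overline{\langle\Gamma\rangle} \dot{=} \langle\Gamma\rangle$.

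Alternatively, and perhaps more cleanly, I would avoid identifying $\beta$ with $1-\alpha$ at all and instead argue directly on $LR$ words: the frieze $\bot(\Gamma)$ is $\text{CCF}(i(w))$, but a frieze is equally well represented by the word $ir$ applied to any of its representing words, so $\bot(\Gamma) = \text{CCF}(ir(i(w))) = \text{CCF}(ri(w))$. If one can show (by a short induction on word length, in the same style as Lemmas~\ref{w2-5}, \ref{w2-6}, \ref{w2-8}) that the rational number attached to $i(w)$ is exactly $ir$ of the rational number attached to $w$ — equivalently that passing to $\bot$ corresponds at the level of rational numbers to $\alpha \mapsto ir(\alpha)$ — then $\langle\bot(\Gamma)\rangle = v(\phi(ir(\alpha))) = \overline{v(\phi(\alpha))} = \overline{\langle\Gamma\rangle}$ is immediate from Theorem~\ref{w2-11}, which is precisely $\dot{=}$.

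The main obstacle is the bookkeeping step: pinning down exactly which rational number is attached to $i(w)$ and showing it is $ir(\alpha)$ (or, in the first approach, reconciling $1-\alpha$ with $ir(\alpha)$ at the level of $v\circ\phi$). This is where the asymmetry between the \emph{word} operation $i$ and the \emph{fraction} operation $ir$ has to be handled carefully, using Lemma~\ref{w2-10} and the fact that a frieze does not determine its word uniquely (only up to $ir$). I expect this to be a few lines of continued-fraction manipulation plus one appeal to regular isotopy of denominator closures, rather than anything genuinely hard, since all the heavy lifting — the recursion for $\phi$, Yamada's formula, and the $ir$-conjugation identity for $v\circ\phi$ — is already available from Theorems~\ref{w1-8}, \ref{w1-10} and \ref{w2-11}.
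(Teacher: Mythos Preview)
Your plan is a genuinely different route from the paper's, and the part you flag as ``the main obstacle'' is more serious than you estimate. The paper does \emph{not} reduce to Theorem~\ref{w2-11} at all: it proves the identity $v(\phi(i(w)))=\overline{v(\phi(w))}$ by a short direct induction on $\ell(w)$, using only Lemma~\ref{w2-5}(2) (that $i(w_1\vee w_2)=i(w_2)\vee i(w_1)$) together with the recursion \eqref{eqw2-3}. Writing $w=w_1\vee w_2$ and applying the recursion to both $w$ and $i(w)$ immediately gives the inductive step; no diagrams, no continued-fraction bookkeeping, no appeal to Theorem~\ref{w2-11}.

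Your proposed reduction, by contrast, needs the equality $v(\phi(1-\alpha))=v(\phi(ir(\alpha)))$. This equality is true, but in the paper it is \emph{derived} as a corollary of Theorems~\ref{w2-11} and~\ref{w2-15} combined (see the Remark immediately after Theorem~\ref{w2-15}), so invoking it to prove Theorem~\ref{w2-15} is circular unless you supply an independent proof. Your second approach contains an actual misidentification: the rational number attached to $i(w)$ is $1-\alpha$ (Lemma~\ref{w2-5}(1)), not $ir(\alpha)$; it is $ir(w)$---which represents $\Gamma$, not $\bot(\Gamma)$---that corresponds to $ir(\alpha)$ via Lemma~\ref{w2-10}. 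So ``passing to $\bot$ corresponds to $\alpha\mapsto ir(\alpha)$'' is false as stated. One can try to rescue the first approach by a direct diagram argument that $\langle D(T(1-\alpha))\rangle=\overline{\langle D(T(\alpha))\rangle}$ with the exponents from Theorem~\ref{w1-10} matching exactly, but this requires case-splitting on the continued fraction of $1-\alpha$ and tracking Reidemeister~I contributions; it is comparable in length to (and messier than) the paper's five-line induction.
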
 
\begin{proof}
It is enough to show that for an $LR$ word $w$ 
\begin{equation}\label{eqw2-5}
v\bigl(\phi \bigl( i(w)\bigr) \bigr) =\overline{v(\phi (w))}
\end{equation}
under the identification of the set of $LR$ words and $\mathbb{Q}\cap (0,1)$. 
We show it by induction on the length of $LR$ words. 
\par 
It can be easily shown that \eqref{eqw2-5} holds for any $LR$ word with length less than or equal to $1$. 
\par 
Assume that \eqref{eqw2-5} holds for all $LR$ words with length less than $d$. 
Let $w$ be an $LR$ word with length $d$. 
Take $LR$ words $w_1, w_2$ such that $w=w_1\vee w_2$, and write as 
$w_1=w\bigl( \frac{p}{q}\bigr) ,\ w_2=w\bigl( \frac{r}{s}\bigr)$. 
Then $\frac{p}{q}, \frac{r}{s}$ are Farey neighbors, and 
by Lemma~\ref{w2-5} 
$i(w_2)=w\bigl( \frac{q-p}{q}\bigr),\ i(w_1)=w\bigl( \frac{s-r}{s}\bigr)$,  
$i(w)=i(w_2)\vee i(w_1)$. 
It follows from Theorem~\ref{w2-13} and induction hypothesis that 
\begin{align*}
v\bigl(\phi \bigl( i(w)\bigr) \bigr) 
&=-A^4v\bigl(\phi \bigl( i(w_2)\bigr) \bigr) -A^{-4}v\bigl(\phi \bigl( i(w_1)\bigr) \bigr)  \\ 
&=-A^4\overline{v(\phi (w_2))}-A^{-4}\overline{v(\phi (w_1))} \displaybreak[0]\\ 
&=\overline{-A^{-4}v(\phi (w_2))-A^4v(\phi (w_1))} \displaybreak[0]\\ 
&=\overline{v(\phi (w))}. 
\end{align*}
Thus \eqref{eqw2-5} holds for $w$. 
\end{proof}

\par \noindent 
\begin{rem}
Let $w$ be an $LR$ word, and $\alpha , \beta$ be the corresponding rational numbers to $w$, $r(w)$, respectively. 
Then by Theorems~\ref{w2-11} and \ref{w2-15} we have 
$v(\phi (\alpha ))=v(\phi (\beta ))$. 
\end{rem} 

\par \noindent 
Combining \eqref{eqw2-5} with Lemma~\ref{w2-5} (1) we have: 

\begin{thm}\label{property_vphi}
Let $\frac{p}{q}$ be an irreducible fraction with $1\leq p\leq q$. 
Then, $\overline{v(\phi (\frac{p}{q}))}=v(\phi (\frac{q-p}{q}))$. 
\end{thm}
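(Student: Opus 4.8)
The plan is to reduce Theorem~\ref{property_vphi} directly to the work already done for $LR$ words, since every $\frac{p}{q}$ with $1\le p\le q$ corresponds to a rational number in the half-open interval $(0,1]$. First I would dispose of the boundary case $p=q$ (so $\frac{p}{q}=\frac{1}{1}$, and $\frac{q-p}{q}=\frac{0}{1}$): here $\phi(\frac{1}{1})$ and $\phi(\frac{0}{1})=[0]$ are known explicitly from the definition of $\phi$ and the recursion $\phi(1)=-A^4\phi(0)-A^{-4}\phi(\infty)$ in Theorem~\ref{w1-8}, so $v(\phi(\tfrac{1}{1}))$ and $v(\phi(\tfrac{0}{1}))$ can be computed by hand and checked to be conjugate (recall $\overline{v(\phi(\infty))}=v(\phi(0))$-type identities follow from $\overline{\delta}=\delta$). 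For the generic case $1\le p<q$ the fraction $\alpha=\frac{p}{q}$ lies in $\mathbb{Q}\cap(0,1)$, so it has an associated $LR$ word $w=w(\alpha)$, and by Lemma~\ref{w2-5}(1) the word $i(w)$ is exactly $w\bigl(\frac{q-p}{q}\bigr)$.

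The key step is then to invoke \eqref{eqw2-5} from the proof of Theorem~\ref{w2-15}, which states $v(\phi(i(w)))=\overline{v(\phi(w))}$ under the identification of $LR$ words with $\mathbb{Q}\cap(0,1)$. Since $w$ corresponds to $\frac{p}{q}$ and $i(w)$ corresponds to $\frac{q-p}{q}$, this reads precisely
$$v\Bigl(\phi\Bigl(\tfrac{q-p}{q}\Bigr)\Bigr)=\overline{v\Bigl(\phi\Bigl(\tfrac{p}{q}\Bigr)\Bigr)},$$
which is the claimed identity. So the body of the proof is a two-line translation: identify $w(\tfrac{p}{q})$, apply Lemma~\ref{w2-5}(1) to see that applying $i$ sends it to $w(\tfrac{q-p}{q})$, then apply \eqref{eqw2-5}. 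This is already essentially the content of the sentence ``Combining \eqref{eqw2-5} with Lemma~\ref{w2-5}(1) we have'' immediately preceding the theorem, so the ``proof'' is really just making that combination explicit.

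I do not expect a genuine obstacle here, since all the heavy lifting — the inductive proof of $v(\phi(i(w)))=\overline{v(\phi(w))}$ on word length — was done in Theorem~\ref{w2-15}. The only thing requiring a moment of care is the bookkeeping at the endpoints of the interval: one must confirm that $\frac{p}{q}$ with $p=q$ is handled separately (it is not an $LR$ word in $(0,1)$), and that the convention $q\ge 1$, $p\ge 1$ together with $p\le q$ genuinely covers all cases without the fraction degenerating. A secondary, purely cosmetic point is to state clearly what $\overline{(-)}$ does to $v(\phi(\tfrac{p}{q}))$, namely swap $A\leftrightarrow A^{-1}$, so that the reader sees the identity lives in $\Lambda=\mathbb{Z}[A,A^{-1}]$ and matches the ``$\dot{=}$'' language used earlier. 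Beyond that, the proof is a short corollary-style argument.
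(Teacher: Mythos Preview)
Your proposal is correct and matches the paper's own argument exactly: the paper states the theorem immediately after the sentence ``Combining \eqref{eqw2-5} with Lemma~\ref{w2-5}(1) we have,'' and your proof just unpacks that combination. If anything you are slightly more careful than the paper, since you separately verify the boundary case $p=q=1$ that falls outside the $LR$-word framework of $(0,1)$.
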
 

\par \smallskip 
\begin{exam}
For $w=L^3R$ we set $\Gamma :=\text{CCF}(w)$. 
As it has previously observed, then $\bot (\Gamma )$ is the Conway-Coxeter frieze associated to $i(w)=R^3L$ and $\langle \bot (\Gamma )\rangle \dot{=}\langle \Gamma \rangle $ by Theorem~\ref{w2-15}. 
This equation is induced from $v\bigl(\phi \bigl( \frac{4}{9}\bigr)\bigr)=\overline{v\bigl(\phi \bigl( \frac{5}{9}\bigr)\bigr)}$ 
since $w$ and  $i(w)$ correspond to the rational numbers $\frac{5}{9}$ and $\frac{4}{9}$, respectively.
\end{exam} 

\begin{thm}\label{w2-18}
Let $w$ be an $LR$ word, and denote by $|w|_{L}$ and $|w|_{R}$ the numbers of $L$ and $R$ in the word $w$, respectively. 
If $\langle \Gamma (w)\rangle$ is regarded as $v(\phi (w))$, then 
\begin{enumerate}
\item[$(1)$] the minimum and maximum degrees of $\langle \Gamma (w)\rangle$ are $-4(|w|_L+1)$ and $4(|w|_R+1)$, respectively, 
\item[$(2)$] if the coefficient of  $A^{4k}$ in $\langle \Gamma (w)\rangle$ is not $0$, then its sign is $(-1)^k$ for all $k\in \mathbb{Z}$. 
\end{enumerate}
\end{thm}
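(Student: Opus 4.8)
The plan is to prove, by induction on the length $\ell (w)$ of the $LR$ word $w$, a statement slightly stronger than the two assertions combined: writing $\langle \Gamma (w)\rangle =\sum _{k\in \mathbb{Z}}c_k(w)A^{4k}$ --- which is legitimate because $v(\phi (-))$ is a Laurent polynomial in $-A^4$, as recorded just after \eqref{eqw1-16} --- we claim that $c_k(w)=0$ unless $-(|w|_L+1)\leq k\leq |w|_R+1$, that the sign of $c_k(w)$ equals $(-1)^k$ whenever $c_k(w)\neq 0$, and that $c_{-(|w|_L+1)}(w)\neq 0$ and $c_{|w|_R+1}(w)\neq 0$. The last condition pins down the minimum and maximum degrees exactly, and is precisely what makes the stronger formulation necessary: without it one could not exclude a cancellation of the would-be extremal terms. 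The base cases $w=\emptyset ,L,R$ follow at once from the explicit formulas in Theorem~\ref{w2-13}.

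For the inductive step, fix $w$ with $\ell (w)=d\geq 2$ and split according to the initial run of $w$ into four cases: $(\mathrm{a})$ $w=L^kRv$ with $k\geq 1$ and $v$ a strictly shorter $LR$ word (possibly empty); $(\mathrm{b})$ $w=R^kLv$, symmetrically; $(\mathrm{c})$ $w=L^k$; $(\mathrm{d})$ $w=R^k$. In case $(\mathrm{a})$ I would use the second identity of \eqref{eqw2-4}, $\langle \Gamma (L^kRv)\rangle =-A^4\langle \Gamma (v)\rangle -A^{-4}\langle \Gamma (L^{k-1}Rv)\rangle$, and apply the inductive hypothesis to the shorter words $v$ and $L^{k-1}Rv$, using $|v|_L=|w|_L-k$, $|v|_R=|w|_R-1$, $|L^{k-1}Rv|_L=|w|_L-1$, $|L^{k-1}Rv|_R=|w|_R$; case $(\mathrm{b})$ is handled the same way via the first identity of \eqref{eqw2-4}. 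For the pure powers, $L^k$ corresponds to $\tfrac{1}{k+2}$, whose pair of Farey parents is $(\tfrac{0}{1},\tfrac{1}{k+1})$, so Theorem~\ref{w1-8}, the definition $\phi (0)=[0]$, linearity of $v$, and \eqref{eqw1-4} give $\langle \Gamma (L^k)\rangle =v(\phi (\tfrac{1}{k+2}))=-A^4-A^{-4}\langle \Gamma (L^{k-1})\rangle$; symmetrically $R^k$ corresponds to $\tfrac{k+1}{k+2}$ with Farey parents $(\tfrac{k}{k+1},\tfrac{1}{1})$, and since $\phi (1)=-A^2[\infty ]-A^4[0]$ yields $v(\phi (1))=1$, one gets $\langle \Gamma (R^k)\rangle =-A^4\langle \Gamma (R^{k-1})\rangle -A^{-4}$. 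Both recursions reproduce the $k=1$ values of Theorem~\ref{w2-13}, hence are valid for all $k\geq 1$.

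The core of the argument is then uniform across the four cases. Each term on the right-hand side of the relevant recursion, after the shift by $\pm A^{\pm 4}$, still has the property that the sign of its $A^{4j}$-coefficient equals $(-1)^j$ whenever that coefficient is nonzero. Hence the two terms can never cancel --- coefficients of the same sign only reinforce one another --- so the sum again satisfies the sign condition, and its support lies in the union of the two supports, which is contained in $[-4(|w|_L+1),\,4(|w|_R+1)]$. For the extremality condition one observes that at the top degree $4(|w|_R+1)$ exactly one summand is supported: in case $(\mathrm{a})$ it is $-A^4\langle \Gamma (v)\rangle$, the other term reaching only up to $4|w|_R$, and its coefficient there equals $-c_{|v|_R+1}(v)\neq 0$ by hypothesis; symmetrically at the bottom degree $-4(|w|_L+1)$ only $-A^{-4}\langle \Gamma (L^{k-1}Rv)\rangle$ contributes, with coefficient $-c_{-(|L^{k-1}Rv|_L+1)}(L^{k-1}Rv)\neq 0$. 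In cases $(\mathrm{c})$ and $(\mathrm{d})$ the two summands even have disjoint supports, so non-cancellation is immediate and the extremal coefficients are visibly $-1$ at one end and the hypothesised nonzero coefficient at the other.

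I expect the only genuine obstacle to be the one already isolated above: guaranteeing that the extremal coefficients survive the recursion. The sign-pattern strengthening is what resolves it, replacing a potentially delicate cancellation estimate by the automatic fact that all the coefficients in play carry a prescribed sign. A minor organisational nuisance is that the pure powers $L^k,R^k$ fall outside the scope of \eqref{eqw2-4} and must instead be routed through Theorem~\ref{w1-8} with the boundary values $\phi (0)=[0]$ and $\phi (1)=-A^2[\infty ]-A^4[0]$.
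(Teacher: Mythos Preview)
Your proof is correct and follows essentially the same approach as the paper's: induction on the length of $w$ via the recursive identities of Theorem~\ref{w2-13} together with the Farey recursion \eqref{eqw1-16}, with the sign pattern of part~(2) supplying the non-cancellation that pins down the extremal degrees. Your organization is in fact somewhat cleaner than the paper's --- you take only $\emptyset ,L,R$ as base cases and fold the pure powers $L^k,R^k$ into the induction via $v(\phi (0))=v(\phi (1))=1$, whereas the paper verifies $L^a,R^a,R^aL^b,L^aR^b$ separately (invoking the involution $i$ and \eqref{eqw2-5} for half of these) before running the induction --- and you make the role of the sign property in preventing cancellation explicit, which the paper leaves implicit.
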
 
\begin{proof}
In this proof $t$ stands for $A^4$, and the minimum and the maximum degrees of $\langle \Gamma (w)\rangle$ are counted as quarters. 
\par 
This theorem can be proved by induction on $\ell (w)$. 
As the first step of induction we will check that the statement is true for $w=L^a$, $R^a$, $R^aL^b$ and $L^aR^b$ with some positive integers $a, b$. 
\par 
It is easily verified for $w=L$. 
For $w=L^a$ with $a\geq 2$, we have
$\langle \Gamma (L^a)\rangle =-t-t^{-1}\langle \Gamma (L^{a-1})\rangle $ since $w\bigl( \frac{1}{a+2}\bigr) =L^a$ and $\frac{1}{a+2}=\frac{0}{1}\sharp \frac{1}{a+1}$. 
Therefore by applying an induction argument, it can be shown that the statement is true for $w=L^a$ for all positive integers $a$. 
If $w=R^aL^b$, then $w=w\bigl( \frac{a+1}{a(b+1)+b+2}\bigr)$ and $\frac{a+1}{a(b+1)+b+2}=\frac{a}{ab+a+1}\sharp \frac{1}{b+1}$. 
Hence, we have $\langle \Gamma (R^aL^b)\rangle =-t\langle \Gamma (R^{a-1}L^b)\rangle -t^{-1}\langle \Gamma (L^{b-1})\rangle $. 
By an induction argument again it follows that the statement in the theorem is true for $w=R^aL^b$ for all positive integers $a, b$. 
\par 
Applying the operation $i$ to $R^a$ and $L^aR^b$ and using the equation \eqref{eqw2-5} we see that 
the statement is true for such all words.  
\par 
Next, let $l$ be a positive integer, and assume that the statement is true for all words $w_1$ with $\ell (w_1)\leq l$. 
It is sufficient to show that it is also true for $Lw_1$ and $Rw_1$ in the situation that $w_1$ is written in the form $w_1=L^aRw_2$ or $w_1=R^aLw_2$ for some positive integer $a$ and some $LR$ word $w_2$. 
Let $\frac{r}{s}$ be the rational number corresponding to $w_1$. 
\par 
Let us consider the case $w_1=L^aRw_2$. 
Then, by setting $w_2=w\bigl( \frac{p}{q}\bigr)$ we have $Lw_1=w\bigl( \frac{p}{q}\sharp \frac{r}{s}\bigr)$, and 
hence $\langle \Gamma (Lw_1)\rangle =-t\langle \Gamma (w_2)\rangle -t^{-1}\langle \Gamma (w_1)\rangle$. 
By induction hypothesis the minimum degrees of $\langle \Gamma (w_1)\rangle $ and $\langle \Gamma (w_2)\rangle $ are 
$-(|w_1|_L+1)$ and $-(|w_2|_L+1)$, respectively, and the maximum degrees of $\langle \Gamma (w_1)\rangle $ and $\langle \Gamma (w_2)\rangle $ are $|w_1|_R+1$ and $|w_2|_R+1$, respectively. 
Since $|w_1|_L=|w_2|_L+a$ and $|w_1|_R=|w_2|_R+1$, it follows that the minimum and the maximum degrees of $\langle \Gamma (Lw_1)\rangle $ are $-(|w_1|_L+2)=-(|Lw_1|_L+1)$ and $|w_2|_R+2=|Lw_1|_R+1$, respectively. 
Part (2) follows from the recursive formula of $\langle \Gamma (Lw_1)\rangle$, immediately. 
Similarly, by setting $L^{a-1}Rw_2=w\bigl( \frac{x}{y}\bigr)$, we have $Rw_1=w\bigl( \frac{r}{s}\sharp \frac{x}{y}\bigr)$ and 
 $\langle \Gamma (Rw_1)\rangle =-t\langle \Gamma (w_1)\rangle -t^{-1}\langle \Gamma (L^{a-1}Rw_2)\rangle$. 
By induction hypothesis and this equation it can be shown that the minimum and the maximum degrees of 
$\langle \Gamma (Rw_1)\rangle $ are $-(|w_2|_L+a+1)=-(|Rw_1|_L+1)$ and $|w_2|_R+3=|Rw_1|_R+1$, respectively. 
\par 
The same argument is applicable to the case $w_1=R^aLw_2$. 
In this case, 
if we set $R^{a-1}Lw_2=w\bigl( \frac{p}{q}\bigr)$, then $Lw_1=w\bigl( \frac{p}{q}\sharp \frac{r}{s}\bigr)$ and $\langle \Gamma (Lw_1)\rangle =-t\langle \Gamma (R^{a-1}Lw_2)\rangle -t^{-1}\langle \Gamma (w_1)\rangle$. 
Similarly, if we set $w_2=w\bigl( \frac{x}{y}\bigr)$,  then $Rw_1=w\bigl( \frac{r}{s}\sharp \frac{x}{y}\bigr)$ and 
 $\langle \Gamma (Rw_1)\rangle =-t\langle \Gamma (w_1)\rangle -t^{-1}\langle \Gamma (w_2)\rangle$. 
By using these equations one can easily show that the statement in the theorem is true for $\langle \Gamma (Lw_1)\rangle $ and $\langle \Gamma (Rw_1)\rangle $ under the condition $w_1=R^aLw_2$. 
\end{proof}

\section{Complete invariants on Conway-Coxeter friezes of zigzag-type}
\par 
Let us recall the computation method of the Kauffman bracket of the rational tangle diagram $T(\alpha )$ associated with a rational number $\alpha $ using the Stern-Brocot tree described in Subsection~\ref{subsection1-3}. 
Instead of the step (iv) in that process, we take the product of all  $-t^{\pm 1}$ on the path from $\alpha $ to  $\frac{1}{2}$, and take the sum of them running over such all paths. 
We denote the resulting Laurent polynomial by $Q_{\alpha }(t)$. 
Similarly take the product of all  $-t^{\pm 1}$ on the path from $\alpha $ to  $\frac{1}{1}$ (or equivalently $\frac{0}{1}$), and take the sum of them running over such all paths. 
We denote the resulting Laurent polynomial by $R_{\alpha }(t)$.  
Then the Kauffman bracket polynomial of the Conway-Coxeter frieze $\Gamma (w(\alpha ))$ can be computed by the formula: 
\begin{equation}\label{eqw3-1}
\langle \Gamma (w(\alpha ))\rangle =(-t-t^{-1})Q_{\alpha }(t)+R_{\alpha }(t)
\end{equation}
for substituting $t=A^4$. 
To derive the equation we define 
\begin{align*}
\Bigl[ \dfrac{1}{2}\Bigr] &:=(1-t)[0]+t^{-2}[\infty ]\in \Lambda ^2,\\ 
[1] &:=-t[0]-t^{-1}[\infty ]\in \Lambda ^2.
\end{align*}

\begin{lem}\label{w3-1}
\begin{enumerate}
\item[$(1)$] $Q_{\frac{1}{2}}(t)=1,\ R_{\frac{1}{2}}(t)=0$, $Q_{\frac{0}{1}}(t)=Q_{\frac{1}{1}}(t)=0,\ R_{\frac{0}{1}}(t)=R_{\frac{1}{1}}(t)=1$, 
$Q_{\frac{1}{3}}(t)=-t^{-1},\ R_{\frac{1}{3}}(t)=-t$, 
$Q_{\frac{2}{3}}(t)=-t,\ R_{\frac{2}{3}}(t)=-t^{-1}$, and if $(y, z)$ is the pair of parents of 
$x\in \mathbb{Q}\cap (0,1)$, then 
\begin{align*}
Q_{x}(t)&=-tQ_{y}(t)-t^{-1}Q_{z}(t),\\ 
R_{x}(t)&=-tR_{y}(t)-t^{-1}R_{z}(t).
\end{align*}
\item[$(2)$] For $\alpha =\frac{p}{q}\in \mathbb{Q}\cap (0,1)$, 
\begin{enumerate}
\item[$(i)$] if $2p>q$, then $\tilde{\phi }(\alpha )=Q_{\alpha }(t)\bigl[ \frac{1}{2}\bigr] +R_{\alpha}(t)[1]$, 
\item[$(ii)$] if $2p<q$, then $\tilde{\phi }(\alpha )=Q_{\alpha }(t)\bigl[ \frac{1}{2}\bigr] +R_{\alpha}(t)[0]$. 
\end{enumerate}
\item[$(3)$] For any $\alpha \in \mathbb{Q}\cap (0,1)$
$$\langle \Gamma (w(\alpha ))\rangle =(-t-t^{-1})Q_{\alpha }(t)+R_{\alpha }(t).$$
\end{enumerate}
\end{lem}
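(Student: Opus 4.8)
\emph{Overview and part (1).} I would prove the three parts in order; part (3) will follow quickly from part (2), and part (2) from part (1) via the common recursion $g(x)=-tg(y)-t^{-1}g(z)$ satisfied both by $\tilde{\phi}$ (Corollary~\ref{w1-9}, with $t=A^4$) and by the pair $(Q_\alpha,R_\alpha)$. For part (1), the base values are read off directly from the description of $Q_\alpha(t),R_\alpha(t)$ as weighted sums over ascending paths in the ancestor triangle: the only path from $\tfrac12$ to $\tfrac12$ is empty, so $Q_{1/2}=1$; the path from $\tfrac13$ to $\tfrac12$ is the single edge $\tfrac13$--$\tfrac12$, which lies on a right oblique side, so $Q_{1/3}=-t^{-1}$; and the remaining base values at $\tfrac01,\tfrac11,\tfrac13,\tfrac23$ are obtained the same way. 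For the recursions, if $(y,z)$ is the pair of parents of $x\in\mathbb{Q}\cap(0,1)$ with $y<z$, then the last edge of any ascending path from $\tfrac12$ (resp.\ from $\tfrac11$) to $x$ enters $x$ either from $y$ along the left oblique edge $y$--$x$ of the fundamental triangle $\{y,z,x\}$ (weight $-t$) or from $z$ along the right oblique edge $z$--$x$ (weight $-t^{-1}$); splitting the path sum accordingly yields $Q_x=-tQ_y-t^{-1}Q_z$ and $R_x=-tR_y-t^{-1}R_z$. This is precisely the bookkeeping carried out in the proof of Theorem~\ref{w1-8}; the equality of the path sum to $\tfrac11$ with that to $\tfrac01$ then follows from the same recursion together with $R_{0/1}=R_{1/1}=1$.

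\emph{Part (2).} First I record the identifications $[\tfrac12]=\tilde{\phi}(\tfrac12)$, $[1]=\tilde{\phi}(1)$, $[0]=\tilde{\phi}(0)$: by Corollary~\ref{w1-9} one has $\tilde{\phi}(1)=-t\tilde{\phi}(0)-t^{-1}\tilde{\phi}(\infty)=-t[0]-t^{-1}[\infty]=[1]$ and then $\tilde{\phi}(\tfrac12)=-t\tilde{\phi}(0)-t^{-1}\tilde{\phi}(1)=(1-t)[0]+t^{-2}[\infty]=[\tfrac12]$. Now I argue by induction on $\ell\bigl(w(\alpha)\bigr)$. The cases $\alpha=\tfrac12$ (word $\emptyset$), $\alpha=\tfrac13$ and $\alpha=\tfrac23$ are checked by hand from the base values of part (1). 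For the inductive step, let $(y,z)$ be the parents of $\alpha$, so $w(\alpha)=w(y)\vee w(z)$ is strictly longer than both $w(y)$ and $w(z)$. If $\alpha>\tfrac12$, then $\alpha$ lies in the Farey subtree with endpoints $\tfrac12$ and $\tfrac11$, so by Lemma~\ref{w1-5} its parents satisfy $\tfrac12\le y<z\le\tfrac11$; combining the inductive hypothesis (when $y,z\in(0,1)$) with the base values of part (1) (when $y=\tfrac12$ or $z=\tfrac11$) gives $\tilde{\phi}(y)=Q_y[\tfrac12]+R_y[1]$ and $\tilde{\phi}(z)=Q_z[\tfrac12]+R_z[1]$, whence $\tilde{\phi}(\alpha)=-t\tilde{\phi}(y)-t^{-1}\tilde{\phi}(z)=(-tQ_y-t^{-1}Q_z)[\tfrac12]+(-tR_y-t^{-1}R_z)[1]=Q_\alpha[\tfrac12]+R_\alpha[1]$ by part (1). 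If $\alpha<\tfrac12$, the identical argument in the Farey subtree with endpoints $\tfrac01$ and $\tfrac12$, with $[0]$ in place of $[1]$, gives $\tilde{\phi}(\alpha)=Q_\alpha[\tfrac12]+R_\alpha[0]$.

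\emph{Part (3).} From \eqref{eqw1-4}, \eqref{eqw1-13} and Corollary~\ref{w1-9} one checks directly that $\langle\Gamma(w(\alpha))\rangle=v(\phi(\alpha))=\mathrm{tr}\bigl(\tilde{\phi}(\alpha)\bigr)$, where $\mathrm{tr}(a[\infty]+b[0])=-a\,t(t+1)+b$ with $t=A^4$. A short computation gives $\mathrm{tr}\bigl([\tfrac12]\bigr)=-t^{-2}t(t+1)+(1-t)=-t-t^{-1}$ and $\mathrm{tr}([1])=\mathrm{tr}([0])=1$. Applying $\mathrm{tr}$ to the two formulas of part (2) therefore collapses both cases to $\langle\Gamma(w(\alpha))\rangle=(-t-t^{-1})Q_\alpha(t)+R_\alpha(t)$; the leftover case $\alpha=\tfrac12$ is checked against $\langle\Gamma(\emptyset)\rangle=-A^4-A^{-4}$ from Theorem~\ref{w2-13}, since $(-t-t^{-1})Q_{1/2}+R_{1/2}=-t-t^{-1}$.

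\emph{Main obstacle.} The substantive part is (1): one must pin down which oblique side of a fundamental triangle carries each edge, so that the weights $-t$ and $-t^{-1}$ are attached correctly and the path sum to $\tfrac11$ coincides with the one to $\tfrac01$. After that, the only delicate point in (2) is keeping the case analysis consistent at the boundary fractions $\tfrac01,\tfrac12,\tfrac11$, which carry no $LR$ word and so must enter the induction through the base values of part (1) rather than through the inductive hypothesis; once this is arranged, parts (2) and (3) are essentially formal.
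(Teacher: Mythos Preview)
Your proposal is correct and follows essentially the same approach as the paper: the paper dismisses parts (1) and (2) as ``direct consequences of the definitions of $Q_{\alpha}(t)$ and $R_{\alpha}(t)$'' and then proves part (3) exactly as you do, by applying $\mathrm{tr}$ to the expression in part (2) and computing $\mathrm{tr}\bigl([\tfrac12]\bigr)=-t-t^{-1}$ and $\mathrm{tr}([1])=\mathrm{tr}([0])=1$. Your write-up simply supplies the details behind the phrase ``direct consequences,'' namely the identifications $[\tfrac12]=\tilde{\phi}(\tfrac12)$, $[1]=\tilde{\phi}(1)$ and the induction on $\ell(w(\alpha))$ via the common recursion, which the paper leaves implicit.
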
 
\begin{proof}
Parts (1) and (2) are direct consequences of the definitions of $Q_{\alpha}(t)$ and $R_{\alpha }(t)$. 
Let us show Part (3). 
We set $\alpha =\frac{p}{q}$ as an irreducible fraction. 
\par 
\noindent 
If $2p>q$, then by Part (2) 
\begin{align*}
\tilde{\phi }(\alpha )
&=Q_{\alpha }(t)\Bigl[ \dfrac{1}{2}\Bigr] +R_{\alpha}(t)[1] \\ 
&=\bigl( (1-t)Q_{\alpha }(t)-tR_{\alpha }(t)\bigr) [0]+\bigl( t^{-2}Q_{\alpha }(t)-t^{-1}R_{\alpha }(t)\bigr) [\infty ]. 
\end{align*}
It follows that 
\begin{align*}
v(\phi (\alpha ))
&=\text{tr}\bigl( \tilde{\phi }(\alpha )\bigr) \\ 
&=\bigl( t^{-2}Q_{\alpha }(t)-t^{-1}R_{\alpha }(t)\bigr) (-t(t+1))+\bigl( (1-t)Q_{\alpha }(t)-tR_{\alpha }(t)\bigr) \displaybreak[0]\\ 
&=(-t-t^{-1})Q_{\alpha }(t)+R_{\alpha }(t). 
\end{align*}
Similarly the same formula can be obtained in the case of $2p<q$. 
\end{proof}

\par \medskip 
\begin{prop}\label{w3-2}
Let $\alpha =\frac{p}{q}$ be an rational number in $(0,1)$. 
\begin{enumerate}
\item[$(1)$] If $2p>q$, then 
\begin{enumerate}
\item[$(i)$] $Q_{\frac{p}{q}}(-1)+R_{\frac{p}{q}}(-1)=p$, 
\item[$(ii)$] $Q_{\frac{p}{q}}(-1)=q-p$.
\end{enumerate}
\item[$(2)$] If $2p<q$, then 
\begin{enumerate}
\item[$(i)$] $Q_{\frac{p}{q}}(-1)=p$, 
\item[$(ii)$] $Q_{\frac{p}{q}}(-1)+R_{\frac{p}{q}}(-1)=q-p$.
\end{enumerate}
\end{enumerate}
\end{prop}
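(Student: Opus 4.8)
The plan is to prove all four identities simultaneously by induction on the length of the $LR$ word $w(\alpha)$, using the recursions for $Q_\alpha$ and $R_\alpha$ from Lemma~\ref{w3-1}(1) together with the fact (Lemma~\ref{w1-5}(3)) that $\alpha$ is the Farey sum of its parents $y=\frac{a}{b}$ and $z=\frac{c}{d}$, with $p=a+c$ and $q=b+d$. A convenient reformulation is to introduce the auxiliary quantities $N(\alpha):=Q_{\alpha}(-1)+R_{\alpha}(-1)$ and $M(\alpha):=Q_{\alpha}(-1)$; then Lemma~\ref{w3-1}(1) evaluated at $t=-1$ gives the clean linear recursions $M(x)=M(y)+M(z)$ and $N(x)=N(y)+N(z)$ whenever $(y,z)$ are the parents of $x$ (the coefficients $-t$ and $-t^{-1}$ both become $1$). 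The claim to be proved is then: if $2p>q$ then $M(\alpha)=q-p$ and $N(\alpha)=p$; if $2p<q$ then $M(\alpha)=p$ and $N(\alpha)=q-p$. In both cases this says $\{M(\alpha),N(\alpha)\}=\{p,\,q-p\}$, with the assignment flipped according to which side of $\frac12$ the number $\alpha$ lies on.

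First I would check the base cases directly from Lemma~\ref{w3-1}(1): for $\alpha=\frac13$ (where $2p<q$) one reads off $Q=-t^{-1}\mapsto -1$ and $R=-t\mapsto -1$, so $M=-1$... which is not $p=1$. This sign discrepancy signals that the statement is implicitly using $Q_\alpha(-1)$ up to sign, or more precisely that the intended reading absorbs an overall sign; I would first pin down the correct normalization by recomputing $Q_{\frac13},R_{\frac13}$ and comparing, and then state the induction for the signed or unsigned versions consistently. Assuming the normalization is fixed so that $Q_{\frac13}(-1)=1,R_{\frac13}(-1)=1$ and $Q_{\frac23}(-1)=1,R_{\frac23}(-1)=1$ (i.e. reading off absolute values), the base cases $\frac13$ ($2p<q$: $M=1=p$, $N=2=q-p$) and $\frac23$ ($2p>q$: $M=1=q-p$, $N=2=p$) hold.

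For the inductive step, let $\alpha=\frac{p}{q}$ have parents $y=\frac{a}{b}$, $z=\frac{c}{d}$ with $b<d$ (the case $b>d$ being symmetric), so that $a\le c$, $p=a+c$, $q=b+d$, and $w(\alpha)$ is strictly longer than both $w(y)$ and $w(z)$. The key geometric observation is that exactly one of the two parents lies on the same side of $\frac12$ as $\alpha$, or both do, and one analyses the cases by the position of $\frac12$ in the interval $(y,z)$: if $\frac12\in(y,z)$ then $y<\frac12<z$, $y$ satisfies $2a<b$ and $z$ satisfies $2c>d$; if $\frac12\le y$ then both $y,z$ satisfy $2(\cdot)>(\cdot)$; if $\frac12\ge z$ then both satisfy $2(\cdot)<(\cdot)$. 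In each case one feeds the inductive values into $M(\alpha)=M(y)+M(z)$ and $N(\alpha)=N(y)+N(z)$ and checks that the result is $\{p,q-p\}=\{a+c,\,(b-a)+(d-c)\}$ with the correct flip; for instance in the split case $2p>q$ iff $(a+c)>(b+d)/2$, and $M(y)+M(z)=a+(d-c)$ which one must match against $q-p=(b-a)+(d-c)$, forcing the use of the Farey relation $|ad-bc|=1$ only indirectly — actually the matching $a=(b-a)$ would be false, so the genuinely delicate point is that when $\frac12$ lies strictly inside $(y,z)$ the parent $y$ has length shorter than $\alpha$ but the recursion must still land correctly, and one needs the sharper fact that $\frac12$ itself is $\frac01\sharp\frac11$ so any interval straddling it must have one endpoint with numerator/denominator forcing $a=b-a$ type coincidences only at $y=\frac12$ exactly.

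\textbf{Expected main obstacle.} The hard part will be the case analysis when $\frac12$ lies in the open interval $(y,z)$ between the parents: here $M(y)+M(z)$ mixes a ``$2p<q$-type'' contribution from $y$ with a ``$2p>q$-type'' contribution from $z$, and showing the sum equals the single value $q-p$ (or $p$) demands the identity $a+(d-c)=(b-a)+(d-c)$, i.e. $a=b-a$, which holds only because in this straddling situation the shorter parent must be $\frac12$ itself, so $y=\frac12$, $a=1$, $b=2$. Thus the real content is proving that if $(y,z)$ are Farey neighbours with $y<\frac12<z$ then necessarily $y=\frac12$ — a contradiction unless one parent equals $\frac12$ — so in fact this ``straddling'' case only occurs at the very bottom of the tree and reduces to the base cases, while all deeper nodes lie entirely on one side of $\frac12$ together with both their parents. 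Making this dichotomy precise, and organizing the induction so that it is visibly closed under the two genuine branches ($\frac12\le y<z$ and $y<z\le\frac12$), is where care is needed; everything else is the routine substitution $t=-1$ into Lemma~\ref{w3-1}(1).
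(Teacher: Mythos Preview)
Your approach is essentially the paper's, but two concrete points need fixing.

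First, the sign worry is an arithmetic slip: at $t=-1$ one has $-t^{-1}=-(-1)^{-1}=+1$, so $Q_{1/3}(-1)=1$ and $R_{1/3}(-1)=1$ directly from Lemma~\ref{w3-1}(1), with no ``normalization'' needed. The recursions $M(x)=M(y)+M(z)$ and $N(x)=N(y)+N(z)$ then hold exactly as you wrote, and the base cases $\frac13,\frac23$ check out on the nose.

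Second, your ``expected main obstacle'' is the right structural fact but stated confusingly (``$y<\frac12$ \dots\ then necessarily $y=\frac12$'' is self-contradictory). The clean version---and this is precisely what the paper isolates before running the induction---is: for every $\frac{x}{y}\in\mathbb{Q}\cap(0,1)$ other than $\frac12$, $\frac{1}{k+2}$, $\frac{k+1}{k+2}$, both parents lie weakly on the same side of $\frac12$ as $\frac{x}{y}$ does, with equality only when a parent is $\frac12$ itself (and there the two formulas agree, since $p=q-p$ when $q=2p$). So there is no genuinely ``mixed'' case in the inductive step; your worry about matching $a+(d-c)$ against $(b-a)+(d-c)$ never arises.

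What you \emph{do} omit is the other boundary: the parents of $\frac{1}{k+2}$ include $\frac{0}{1}$, and those of $\frac{k+1}{k+2}$ include $\frac{1}{1}$, neither of which lies in $(0,1)$ and hence neither is covered by the inductive hypothesis as you have set it up. The paper handles these two one-parameter families by a separate direct induction on $k$, using $Q_{0/1}(-1)=Q_{1/1}(-1)=0$ and $R_{0/1}(-1)=R_{1/1}(-1)=1$ from Lemma~\ref{w3-1}(1). (Equivalently, one checks that $\frac{0}{1}$ formally satisfies the case~(2) formulas and $\frac{1}{1}$ the case~(1) formulas, and then enlarges the induction to $[0,1]\cap\mathbb{Q}$.) With the arithmetic corrected and these boundary families addressed, your argument coincides with the paper's.
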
 
\begin{proof}
If $\frac{p}{q}$ is a vertex in the branch consisting of the lower side from $\frac{1}{3}$ in the Stern-Brocot tree, then $\frac{p}{q}<\frac{1}{2}$, or equivalently $2p<q$. 
If $\frac{p}{q}$ is in the branch consisting of  the lower side from $\frac{2}{3}$, then $2p>q$. 
Since all rational numbers in $(0,1)$ can be appeared in the Stern-Brocot tree,  by the definition of Farey sum and the construction of the tree, for all $\frac{x}{y}\in \mathbb{Q}\cap (0,1)$ except for 
$\frac{1}{2}, \frac{1}{k+2}, \frac{k+1}{k+2}\ (k=1,2,\ldots )$, 
the pair of parents $\bigl( \frac{p}{q}, \frac{r}{s}\bigr)$ of $\frac{x}{y}$ satisfy
\begin{align*}
2x<y\ \ & \Longleftrightarrow \ \ 2p<q\ \ \text{and}\ \ 2r\leq s,\\ 
2x>y\ \ & \Longleftrightarrow \ \ 2p\geq q\ \ \text{and}\ \ 2r>s.
\end{align*}
(If $\frac{1}{2}$ is contained as a parent, then an equality sign holds in the above inequalities.) 
Since 
$Q_{\frac{1}{2}}(-1)+R_{\frac{1}{2}}(-1)=Q_{\frac{1}{2}}(-1)=1$, the equations in Parts (1) and (2) hold for $\frac{p}{q}=\frac{1}{2}$. 
\par 
(1) Let $\frac{p}{q}, \frac{r}{s}\in \mathbb{Q}\cap (0,1)$ be Farey neighbors, and set $\frac{x}{y}=\frac{p}{q}\sharp \frac{r}{s}$. 
\par 
(i) Suppose that $2p\geq q$, $2r>s$, and 
$Q_{\frac{p}{q}}(-1)+R_{\frac{p}{q}}(-1)=p,\ Q_{\frac{r}{s}}(-1)+R_{\frac{r}{s}}(-1)=r$. 
Then 
\begin{align*}
Q_{\frac{x}{y}}(-1)+R_{\frac{x}{y}}(-1)
&=-(-1)Q_{\frac{p}{q}}(-1)-(-1)^{-1}Q_{\frac{r}{s}}(-1)-(-1)R_{\frac{p}{q}}(-1)-(-1)^{-1}R_{\frac{r}{s}}(-1) \\ 
&=p+r=x. 
\end{align*}

The pair of parents of $\frac{k+1}{k+2}$ is $\bigl( \frac{k}{k+1}, \frac{1}{1}\bigr)$ and 
$Q_{\frac{1}{1}}(-1)+R_{\frac{1}{1}}(-1)=0+1=1$. 
It follows that 
(1)(i) holds for $\frac{p}{q}=\frac{1}{1}$. 
Thus by induction argument we have 
\begin{align*}
Q_{\frac{k+1}{k+2}}(-1)+R_{\frac{k+1}{k+2}}(-1)
&=-(-1)Q_{\frac{k}{k+1}}(-1)-(-1)^{-1}Q_{\frac{1}{1}}(-1)-(-1)R_{\frac{k}{k+1}}(-1)-(-1)^{-1}R_{\frac{1}{1}}(-1) \\ 
&=Q_{\frac{k}{k+1}}(-1)+R_{\frac{k}{k+1}}(-1)+1\\ 
&=\cdots =Q_{\frac{1}{2}}(-1)+R_{\frac{1}{2}}(-1)+k=k+1.
\end{align*}
\par 
(ii) Suppose that $2p\geq q$, $2r>s$, and 
$Q_{\frac{p}{q}}(-1)=q-p,\ Q_{\frac{r}{s}}(-1)=s-r$. 
Then 
\begin{align*}
Q_{\frac{x}{y}}(-1)
&=-(-1)Q_{\frac{p}{q}}(-1)-(-1)^{-1}Q_{\frac{r}{s}}(-1) \\ 
&=q-p+s-r=y-x.
\end{align*}

Since the pair of parents of $\frac{k+1}{k+2}$ is $\bigl( \frac{k}{k+1}, \frac{1}{1}\bigr)$, and 
$Q_{\frac{1}{1}}(-1)=0=1-1$
we see that (1)(ii) holds for $\frac{p}{q}=\frac{1}{1}$. Thus 
\begin{align*}
Q_{\frac{k+1}{k+2}}(-1)
&=-(-1)Q_{\frac{k}{k+1}}(-1)-(-1)^{-1}Q_{\frac{1}{1}}(-1) \\ 
&=Q_{\frac{k}{k+1}}(-1)\\ 
&=\cdots =Q_{\frac{1}{2}}(-1)=1=(k+2)-(k+1).
\end{align*}
\par 
Part (2) can be shown as the same manner of the proof of Part (1) by using the fact that 
 the pair of parents of $\frac{1}{k+2}$ is $\bigl( \frac{0}{1}, \frac{1}{k+1}\bigr)$. 
\end{proof}

\par \medskip 
From Proposition~\ref{w3-2} we get immediately the following result, which is announced in \cite{KW1}.  

\par \smallskip 
\begin{cor}\label{w3-3}
Let $\frac{p}{q}$ be an irreducible fraction with $1\leq p\leq q$. 
Let $\omega \in \mathbb{C}$ be a primitive $8$th root of unity, and 
 $v(\phi (\frac{p}{q}))(\omega )$ be the resulting value obtained by substituting $A=\omega $ to $v(\phi (\frac{p}{q}))$. 
Then, $v(\phi (\frac{p}{q}))(\omega )=q$. 
\end{cor}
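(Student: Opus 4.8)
The plan is to read the corollary off from Proposition~\ref{w3-2}, the only genuine step being the translation of the substitution $A=\omega$ into the substitution $t=-1$. First I would record that any primitive $8$th root of unity $\omega$ satisfies $\omega^4=-1$; consequently, for any expression presented as a Laurent polynomial in $t=A^4$, evaluating at $A=\omega$ is the same as evaluating at $t=-1$, and in particular $-t-t^{-1}$ specializes to $-(-1)-(-1)^{-1}=2$.

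Next I would combine the definition $\langle\Gamma(w(\alpha))\rangle=v(\phi(\alpha))$ for $\alpha\in\mathbb{Q}\cap(0,1)$ with Lemma~\ref{w3-1}(3), which writes $v(\phi(\alpha))$ as the Laurent polynomial $(-t-t^{-1})Q_\alpha(t)+R_\alpha(t)$ in $A$ with $t=A^4$. Specializing at $A=\omega$ then gives
$$v\bigl(\phi(\tfrac{p}{q})\bigr)(\omega)=2\,Q_{\frac{p}{q}}(-1)+R_{\frac{p}{q}}(-1)$$
for every $\frac{p}{q}\in\mathbb{Q}\cap(0,1)$.

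Then I would split into cases. If $2p>q$, Proposition~\ref{w3-2}(1) gives $Q_{\frac{p}{q}}(-1)=q-p$ and $Q_{\frac{p}{q}}(-1)+R_{\frac{p}{q}}(-1)=p$, so the right-hand side equals $(q-p)+p=q$. If $2p<q$, Proposition~\ref{w3-2}(2) gives $Q_{\frac{p}{q}}(-1)=p$ and $Q_{\frac{p}{q}}(-1)+R_{\frac{p}{q}}(-1)=q-p$, so the right-hand side is again $q$. The case $2p=q$ forces $\frac{p}{q}=\frac{1}{2}$ by irreducibility, and there $Q_{\frac{1}{2}}(-1)=1$, $R_{\frac{1}{2}}(-1)=0$ by Lemma~\ref{w3-1}(1), giving the value $2=q$. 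Finally the endpoint $\frac{p}{q}=\frac{1}{1}$, allowed by $1\le p\le q$ but lying outside $(0,1)$, is handled directly: the parents of $\frac{1}{1}$ are $(\frac{0}{1},\frac{1}{0})$, so Theorem~\ref{w1-8} yields $\phi(\tfrac{1}{1})=-A^4\phi(0)-A^{-4}\phi(\infty)=-A^2[\infty]-A^4[0]$, whence $v(\phi(\tfrac{1}{1}))=-A^2\delta-A^4=1=q$.

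I do not expect a real obstacle, since all the substance is already packaged in Proposition~\ref{w3-2} and what remains is bookkeeping. The two points that need a little care are checking that the specialization $A=\omega$ is compatible with the $t$-variable identity in Lemma~\ref{w3-1}(3), and not dropping the degenerate fractions $\frac{1}{2}$ and $\frac{1}{1}$, which fall outside the hypotheses of Proposition~\ref{w3-2} or of the open-interval convention.
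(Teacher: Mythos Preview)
Your proof is correct and follows essentially the same route as the paper: translate $A=\omega$ into $t=-1$ via Lemma~\ref{w3-1}(3), then read off $2Q_{\frac{p}{q}}(-1)+R_{\frac{p}{q}}(-1)=q$ from Proposition~\ref{w3-2}. Your version is in fact more careful than the paper's, which simply asserts the last equality without splitting into the two cases of Proposition~\ref{w3-2} and does not separately address the boundary fractions $\tfrac{1}{2}$ and $\tfrac{1}{1}$ that lie outside the strict hypotheses of that proposition and of Lemma~\ref{w3-1}(3).
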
 
\begin{proof}
Set $\alpha =\frac{p}{q}$. Then 
by the equation \eqref{eqw3-1}
$$v(\phi (\alpha ))|_{A=\omega }=2Q_{\alpha }(-1)+R_{\alpha }(-1).$$
The right-hand side of the above is equal to $q$ by Proposition~\ref{w3-2}. 
This implies that 
$v(\phi (\alpha ))|_{A=\omega}$\newline $=q$. 
\end{proof}

\par 
\begin{rem}
For a link diagram $D$, $\langle D \rangle_{A=\omega} $  is the complex number obtained by substituting $A= \omega$  into the Kauffman bracket polynomial $\langle D \rangle$. 
Then, it is known that the absolute value $|\langle D\rangle _{A=\omega }|$ is a non-negative integer, and it only depends on ambient isotopy class of $D$. 
So, for a link $L$ the value $\text{det}(L):=|\langle D\rangle _{A=\omega }|$ is well-defined with independence of the choice of diagrams $D$ of $L$. 
If $L$ is ambient isotopic to the rational link $N(T(\frac{p}{q}))$ for an irreducible fraction $\frac{p}{q}$, then $\text{det}(L)=|p|$. 
Since $\bigl( T(\frac{p}{q})\bigr) ^{\text{in}}$ is isotopic to $T(\frac{q}{p})$, by \eqref{eqw1-6} and \eqref{eqw1-7} we have 
$\text{det}(D(T(\frac{p}{q})))=|q|$. 
The Part (2) in the above theorem confirms this result. 
\end{rem} 

\par \medskip 
As another application of Proposition~\ref{w3-2} we have: 

\par \medskip 
\begin{thm}\label{w3-5}
For an $LR$ word $w$, we set 
\par \smallskip \centerline{$C_w:=\bigl\{ \frac{Q_w(-1)}{2Q_w(-1)+R_w(-1)}, \frac{Q_w(-1)+R_w(-1)}{2Q_w(-1)+R_w(-1)}, 
\frac{Q_{r(w)}(-1)}{2Q_{r(w)}(-1)+R_{r(w)}(-1)}, \frac{Q_{r(w)}(-1)+R_{r(w)}(-1)}{2Q_{r(w)}(-1)+R_{r(w)}(-1)} \bigr\} \subset \mathbb{Q}\cap (0,1).$}
\par \smallskip \noindent  
If $w=w\bigl( \frac{p}{q}\bigr) ,\ r(w)=w\bigl( \frac{p^{\prime}}{q}\bigr) $, then 
$C_w=\{ \frac{p}{q}, \frac{q-p}{q}, \frac{p^{\prime}}{q}, \frac{q-p^{\prime}}{q}\}$. 
Therefore, $C_w$ is a complete invariant of the Conway-Coxeter frieze $\Gamma (w)$. 
\end{thm}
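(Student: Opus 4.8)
The plan is to establish the displayed identity $C_w=\{\frac pq,\frac{q-p}q,\frac{p'}q,\frac{q-p'}q\}$ first, since the ``complete invariant'' assertion then follows by combining it with the fact that a zigzag-type Conway--Coxeter frieze is determined by its $LR$ word only up to the operations $i$ and $r$. For the identity, write $w=w(\frac pq)$ and apply Proposition~\ref{w3-2}: whether $2p>q$ or $2p<q$, the two numbers $Q_w(-1)$ and $Q_w(-1)+R_w(-1)$ are, in some order, $p$ and $q-p$, so their sum $2Q_w(-1)+R_w(-1)$ is $q$, whence $\{\frac{Q_w(-1)}{2Q_w(-1)+R_w(-1)},\frac{Q_w(-1)+R_w(-1)}{2Q_w(-1)+R_w(-1)}\}=\{\frac pq,\frac{q-p}q\}$. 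By Lemma~\ref{w2-8} the fraction attached to $r(w)$ has denominator $q$ again, say $r(w)=w(\frac{p'}q)$, and the same argument applied to $r(w)$ yields the other two members of $C_w$. Taking the union gives exactly $\{\frac pq,\frac{q-p}q,\frac{p'}q,\frac{q-p'}q\}$; each of these four is already in lowest terms since $\gcd(p,q)=\gcd(q-p,q)=1$ and likewise for $p'$, a point used below.

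Next I would show $C_w$ depends only on the frieze $\Gamma(w)$. Using Lemma~\ref{w2-5}(1) (so $i(w)=w(\frac{q-p}q)$ and, applied to $r(w)$, $ir(w)=w(\frac{q-p'}q)$), Lemma~\ref{w2-8}, and the commutation of $i$ with $r$, one writes out the fractions attached to $i(w)$, $r(w)$, $ir(w)$ and to their reversals and checks directly that $C_{i(w)}=C_{r(w)}=C_{ir(w)}=C_w$ -- that is, $C$ is constant on the orbit $\{w,i(w),r(w),ir(w)\}$. Conversely, this orbit is exactly the set of $LR$ words representing $\Gamma(w)$: Coxeter's glide-reflection symmetry recalled in \S\ref{subsection2-3} gives $\Gamma(w)=\Gamma(ir(w))$, and since the glide reflection is the composition of the vertical reflection $\bot$ with a horizontal translation one also has $\bot\Gamma(w)=\Gamma(w)$, i.e.\ $\Gamma(w)=\Gamma(i(w))$, and hence $\Gamma(w)=\Gamma(r(w))$ as well. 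Thus $C_w$ is a well-defined invariant of $\Gamma(w)$.

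Finally, for completeness: if $C_w=C_{w'}$ with $w'=w(\frac ab)$, then comparing the (reduced) denominators occurring in the two four-element sets forces $b=q$, and then $\frac aq\in C_w$ forces $w'\in\{w,i(w),r(w),ir(w)\}$, hence $\Gamma(w')=\Gamma(w)$ by the previous paragraph; the reverse implication is immediate from the invariance of $C$. This yields $\Gamma(w)=\Gamma(w')\iff C_w=C_{w'}$. I expect the genuinely delicate point to be the middle step -- identifying the fibre of $w\mapsto\Gamma(w)$ with the $\langle i,r\rangle$-orbit of $w$ (the orbit collapsing harmlessly in the palindromic case $r(w)=w$): this requires the glide-reflection statement together with the fact that such a frieze carries no further $1$-zigzag lines beyond the two interchanged by the glide reflection, each traversed in its two directions. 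Everything else reduces to Proposition~\ref{w3-2} and routine manipulation of the $i$, $r$ identities from \S\ref{subsection2-2}.
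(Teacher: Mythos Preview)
Your proof follows the same route as the paper's---derive $C_w=\{\frac pq,\frac{q-p}q,\frac{p'}q,\frac{q-p'}q\}$ from Proposition~\ref{w3-2}, then verify invariance of this set under the operations that preserve $\Gamma(w)$---but you carry out both halves more carefully than the paper does. The paper's own argument only checks the invariance direction (assuming $\Gamma(w)=\Gamma(w')$, it concludes $C_w=C_{w'}$) and then simply asserts completeness; you supply the converse explicitly by reading off the fraction from $C_w$.

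There is one substantive difference worth flagging. The paper takes the fibre of $w\mapsto\Gamma(w)$ to have two elements (stated as $\{w,ir(w)\}$ at the opening of \S\ref{subsection2-3}, but written as $\{w,i(w)\}$ in the proof itself---an apparent slip). You instead argue that the fibre is the full four-element orbit $\{w,i(w),r(w),ir(w)\}$, deducing $\bot(\Gamma)=\Gamma$ from the glide-reflection symmetry once friezes are regarded up to horizontal translation. Your version is in fact what the completeness claim requires: since your computation shows $C_w=C_{i(w)}$ unconditionally, the theorem would fail unless $\Gamma(i(w))=\Gamma(w)$. The paper's proof does not confront this point, so your identification of the fibre as the delicate step is apt, and your handling of it is the more complete one.
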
 
\begin{proof}
To show that $C_w$ is a complete invariant of the Conway-Coxeter frieze $\Gamma (w)$, 
suppose that $\Gamma (w)=\Gamma (w^{\prime})$ for another $LR$ word $w^{\prime}$. 
Then $w^{\prime}=w$ or $w^{\prime}=i(w)$. 
If $w$ is expressed as $w=w\bigl( \frac{p}{q}\bigr)$ by some irreducible fraction $\frac{p}{q}$, 
then $r(w)=w\bigl( \frac{p^{\prime}}{q}\bigr)$ and $(ir)(w)=w\bigl( \frac{q-p^{\prime}}{q}\bigr)$ 
for some positive integer $p^{\prime}$ by Lemma~\ref{w2-8}. 
Thus, for a Conway-Coxeter frieze $\Gamma$ of zigzag-type the set $C_w$ does not depend on the choice of $w$ such that $\Gamma =\Gamma (w)$. 
This means that $C_w$ gives a complete invariant of Conway-Coxeter friezes of zigzag-type. 
\end{proof}

\section{Recipe of making Kauffman bracket polynomials by using CCF's}
In this section, we explain that one can recognize Theorem~\ref{w2-13} and Theorem~\ref{w2-18} in view point from deleting ``trigonometric-curves'' in a CCF. 
For simplicity we write $D(\frac{p}{q})$ for the denominator of the rational tangle diagram $T(\frac{p}{q})$. 
\par 
For irreducible fractions $\frac{s}{M}, \frac{u}{M}, \frac{v}{M}, \frac{t}{M}$ that are related as in Figure~\ref{figw14},   
the associated Kauffman bracket polynomials  
$\langle D(\frac{s}{M}) \rangle , \langle D(\frac{u}{M}) \rangle , \langle D(\frac{v}{M}) \rangle , \langle D(\frac{t}{M}) \rangle$ are determined by using a ``$(s \rightarrow M \rightarrow u)$-curve" and a ``$(t \rightarrow M \rightarrow v)$-curve" in a CCF  as Figure. 14 whose maximum value is $M$ as follows:

\noindent 
For Conway-Coxeter frieze $\textrm{CCF}(\frac{s}{M})$ like Figure. 14, the $(s \rightarrow M \rightarrow u)$-curve is a kind of trigonometric curve in $\textrm{CCF}(\frac{s}{M})$ defined to pass determined by specifying the route $s \rightarrow M \rightarrow u$.
 Similarly  the $(t \rightarrow M \rightarrow v)$-curve is defined. These trigonometric curves are determined uniquely in the CCF of zigzag type.

In order to compute $\langle D(\frac{s}{M}) \rangle$ one may take the following steps: 

(i) First of all, make such a CCF as follows. 
Place a natural number in the diamond type surrounding $ M $. 
Since each small diamond is an element of $\text{SL}(2, {\Bbb Z}) $ and  $M$ is the maximum value, all other natural numbers in the CCF are determined from $ M $. 
We write this CCF by $\Gamma $. 
\medskip

\begin{figure}[htbp]
\centering 
$$\begin{array}{cccccccccccccccccccccccc} 
1 &&1 &&1 &&1 &&1 && 1 &&1 &&1 && 1 &&1 &&1 &&1 &   \\\hline 
&&&&&&&&&&&&&&&&&&&&&&& \\
&&&&&&&&&&&&&&&&&&&&&&& \\
&&&&&&&&& \ddots&& \rotatebox[origin=c]{90}{$\ddots$} & &&&&&&&&&&& \\
&&&&&&&&\ddots  && a &&\rotatebox[origin=c]{90}{$\ddots$} &&&&&&&&&&& \\
&&&&&&& \ddots&&  s && v &&\rotatebox[origin=c]{90}{$\ddots$} &&&&&&&&&& \\ 
&&&&&&&& b && M &&  d &&&&&&&&&&& \\
&&&&&&& \rotatebox[origin=c]{90}{$\ddots$}&& t && u &&\ddots&&&&&&&&&& \\ 
&&&&&&&& \rotatebox[origin=c]{90}{$\ddots$}&& c &&\ddots&&&&&&&&&&& \\
&&&&&&&&& \rotatebox[origin=c]{90}{$\ddots$}&& \ddots&&&&&&&&&&&& \\
&&&&&&&&&&&&&&&&&&&&&&& \\
&&&&&&&&&&&&&&&&&&&&&&& \\\hline  
1 &&1 &&1 &&1 &&1 && 1 &&1 &&1 && 1 &&1 &&1 && 1 &
\end{array}  $$
\caption{}\label{figw14}
\end{figure}

\medskip

(ii) Next, consider the  $(t \rightarrow M \rightarrow v)$-curve  (magenta line) and the  $(s \rightarrow M \rightarrow u)$-curve (blue line) passing through the maximum value ``$ M $" in the CCF $\Gamma$.

\medskip 

\begin{figure}[htbp]
\centering
{\mathversion{bold}
\scalebox{0.9}[0.9]{
$\begin{array}{ccccccccccccccccccccccccccccc} 
1 && 1 &&\textcolor{blue}{\bf 1} &&1 &&1 && 1 &&1 &&1 && \textcolor{magenta}{\bf 1} && 1 &&1 &&1 &&1 &&1 &&1   \\\hline 
&&&&&\textcolor{blue}{\bf \ddots}&&&&&&&&&& \textcolor{magenta}{\bf\rotatebox[origin=c]{90}{$\ddots$}} &&&&&&&&&&&& \\
&&&&&&\textcolor{blue}{\bf \ddots}&&&&&&&& \textcolor{magenta}{\bf\rotatebox[origin=c]{90}{$\ddots$}}&&&&&&&&&&&&&& \\
&&&&&&&\textcolor{blue}{\bf \ddots} && \ddots&&  \rotatebox[origin=c]{90}{$\ddots$} && \textcolor{magenta}{\bf\rotatebox[origin=c]{90}{$\ddots$}}&&&&&&&&&&&&&&& \\
&&&&&&&&\textcolor{blue}{\bf \ddots}  && a &&\textcolor{magenta}{\rotatebox[origin=c]{90}{$\ddots$} }&&&&&&&&&&&&&&&& \\
&&&&&&& \ddots&&  \text{\textcolor{blue}{$s$}} && \text{\textcolor{magenta}{$v$}} &&\rotatebox[origin=c]{90}{$\ddots$} &&&&&&&&&&&&&&& \\ 
&&&&&&&& b && \text{\textcolor{red}{$M$}} &&  d &&&&&&&&&&&&&&&& \\
&&&&&&& \rotatebox[origin=c]{90}{$\ddots$}&& \text{\textcolor{magenta}{$t$}} && \text{\textcolor{blue}{$u$}} &&\ddots&&&&&&&&&&&&&&& \\ 
&&&&&&&&\textcolor{magenta}{\bf \rotatebox[origin=c]{90}{$\ddots$}}&& c &&\textcolor{blue}{\bf \ddots}&&&&&&&&&&&&&&&& \\
&&&&&&& \textcolor{green}{\bf \rotatebox[origin=c]{90}{$\ddots$}}&& \rotatebox[origin=c]{90}{$\ddots$}&& \ddots&&\textcolor{blue}{\bf \ddots}&&&&&&&&&&&&&& \\
&&&&&&\textcolor{magenta}{\bf \rotatebox[origin=c]{90}{$\ddots$}}&&                                             &&           &&&&\textcolor{blue}
{\bf \ddots}&&&&&&&&&&&&&& \\
&&&&& \textcolor{magenta}{\bf \rotatebox[origin=c]{90}{$\ddots$}}&&                                              &&           &&&&&&
\textcolor{blue}{\bf \ddots}&&&&&&&&&\\\hline  
1 &&1 &&\textcolor{magenta}{\bf 1} &&1 &&1 && 1 &&1 &&1 && \textcolor{blue}{\bf 1} && 1 &&1 && 1&&1 &&1 &&1  
\end{array}  $
}
}
\caption{}\label{figw15}
\end{figure}

\medskip

(iii) In order to compute the Kauffman bracket polynomial of the CCF $\Gamma $ associated with the fraction $\frac{s}{M}$, pay attention to the magenta   $( t \rightarrow M \rightarrow v) $-curve  and look at the maximum value ``$ M $" like, the following {\bf (S)}.

\medskip

\noindent 
{\bf (S)} 
Picking up the ``sinusoidal" shape part of the magenta, bend it at the maximum value ``$M$":

\medskip

\begin{figure}[htbp]
\centering
{\mathversion{bold}
$$\begin{array}{ccccccccccccccccccccccccccc} 
&&&&&&&&&& \text{\textcolor{red}{$M$}} &&&&&&&&&&&&&&&& \\
&&&&&&&&& \text{\textcolor{magenta}{$t$}} && \text{\textcolor{magenta}{$v$}} &&&&&&&&&&&&&&& \\ 
&&&&&&&&\textcolor{magenta}{\bf \rotatebox[origin=c]{90}{$\ddots$}}&&   &&\textcolor{magenta}{\bf \ddots}&&&&&&&&&&&&&& \\
&&&&&&& \textcolor{magenta}{\bf\rotatebox[origin=c]{90}{$\ddots$}}&&&&&&\textcolor{magenta}{\bf \ddots}&&&&&&&&&&&& \\
&&&&&&\textcolor{magenta}{\bf \rotatebox[origin=c]{90}{$\ddots$}}&&&&&&&&\textcolor{magenta}{\bf \ddots}&&&&&&&&&&&& \\
&&&&&\textcolor{magenta}{\bf \rotatebox[origin=c]{90}{$\ddots$}}&&&&&&&&&&\textcolor{magenta}{\bf \ddots}&&&&&&&&&&& \\\hline  
1 &&1 &&\textcolor{magenta}{\bf 1} &&1 &&1 && 1 &&1 &&1 && \textcolor{magenta}{\bf 1}&& 1&&1&& 1&&1 &&1 
\end{array}  $$
}
\caption{}\label{figw16}
\end{figure}

\medskip

(iv) Connect the magenta numbers by a line with the following rules and add a sign to each segment:

\medskip

\begin{enumerate}
\item[(S1)] Put signature minus $-$ on segment from ``$ M $" to the left ``$1$" (southwest-direction) in the floor 
and  signature plus $+$ on segment from ``$ M $" to the right ``$1$" (southeast-direction) in the ceiling.
\item[(S2)] Connect three numbers in the above with signed lines with the following rule.
Draw a line segment so that the number on the vertex of the triangle is the two numbers on the bottom base and determine its signature. 
Extend the line segment so that positive and negative line segments are output one by one from the top vertex in accordance with the signs of the right and the left ends.
\item[(S3)] On each line segment, replace plus with weight $-A^4$ and minus with weight $-A^{-4}$.
\item[(S4)] Compute the product of weights on each path from ``$M$" to the left ``$1$" or to the right  ``$1$". 
\item[(S5)] Sum the product of weights on each path from ``$M$" to the left ``$1$" or to the right  ``$1$". 
\end{enumerate}

\noindent 
Put $\textcircled{1}$ as $1$ in the floor and $\fbox{1}$ as $1$ in the ceiling of the curve, and 
$$\textrm{path}(\Gamma ):=\textrm{the decreasing path  from maximal  to $\textcircled{1}$, or $\fbox{1}$}.$$
Then, the Laurent polynomial by the above recipe coincides with  $\langle \Gamma  \rangle$, that is, 
\begin{equation}\label{eq5.1}
\langle \Gamma  \rangle =\sum_{ \gamma \in \textrm{path}(\Gamma ) } (-1)^{p( \gamma) + q( \gamma)}~ A^{4(p(\gamma) -q(\gamma))}, 
\end{equation}
where $p(\gamma), q(\gamma)$ mean the number of $+$'s , $-$'s respectively in the path $\gamma$. 
(cf. Theorem~\ref{w2-13}.)
\par 
We also set  
$$\textrm{path}(\Gamma )_{\textrm{num}}:=\textrm{the decreasing path  from maximal $M$ to $\textcircled{1}$}$$
and 
\begin{equation}\label{eq5.2}
\langle \Gamma \rangle_{num} :=\sum_{ \gamma \in \textrm{path}(\Gamma )_{\textrm{num}} } (-1)^{p( \gamma) + q( \gamma)} ~A^{4(p(\gamma) -q(\gamma))}. 
\end{equation}

\par \medskip 
Note that 
if $\frac{s}{M}=[a_0;a_1,\dots , a_n]$, then the Kauffman bracket polynomial of the rational link diagram $D(\frac{s}{M})$ is given by 
$$
\Bigl\langle D\Bigl(\frac{s}{M}\Bigr) \Bigr\rangle =(-A^{3})^{\sum_{i=0}^n (-1)^{i+1} a_i} \langle \Gamma \rangle
$$
by Theorem~\ref{w1-10}.  So, we have a formula
\begin{equation}\label{eq5.3}
\Bigl\langle D\Bigl(\frac{s}{M}\Bigr) \Bigr\rangle =(-A^{3})^{\sum_{i=0}^n (-1)^{i+1} a_i} \sum_{ \gamma \in \textrm{path}(\Gamma ) } (-1)^{p( \gamma) + q( \gamma)}~ A^{4(p(\gamma) -q(\gamma))}. 
\end{equation}

\par 
Similarly the Kauffman bracket polynomial $\langle D( \frac{u}{M} ) \rangle$ can be computed, and by using the ``cosine-curve" in the CCF,  
$\langle D( \frac{v}{M} ) \rangle$ and $\langle D( \frac{t}{M} ) \rangle$ can be also computed.

\medskip 
\noindent 
\begin{exam}\label{5.1}
We consider  the case of $\frac{s}{M}=\frac{7}{19}$. 
We focus a diamond surrounding $M=19$ in a fundamental domain $\mathcal{D}_1$ as follows:

\noindent 
$$\begin{array}{ccccc}
&& s && \\
&  7&  & b & \\ 
t && 19 && v \\
& c && d & \\
&& u && 
\end{array}$$

Then this satisfies the following relations:
\begin{align*}
7+d & = b+c=19, \\ 
s+t & = 7, \\
u+t & = c, \\
v+u & = d , \\
s+v & = b , \displaybreak[0]\\ 
7b-17s & =1 , \\
17t-7c & =1, \\
17v-bd & =1, \\
cd-17u & =1, \\
tv-su & =1.
\end{align*} 

\noindent 
We can solve the equations from the maximality of $19$ and have the CCF corresponding to $\frac{7}{19}$ as Figure~\ref{figw17}.

\begin{figure}[htbp]
\centering
\includegraphics[width=15cm]{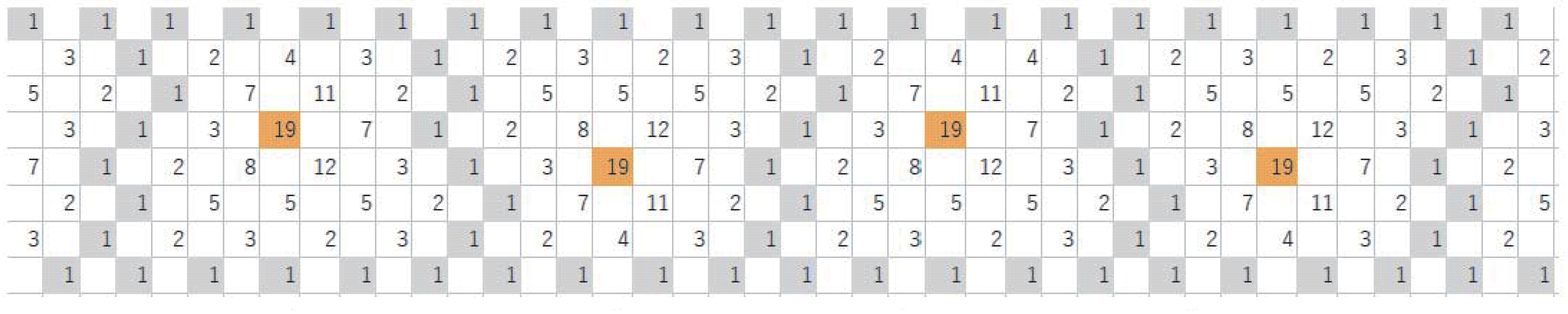} 
$~~~~~~~~\mathcal{D}(RL^2 RL)_1,~~~~~~~~\mathcal{D}(RL^2 RL)_2,~~~~~~~ \mathcal{D}(RL^2 RL)_3, ~~~~~~~~\mathcal{D}(RL^2 RL)_4$
\caption{}\label{figw17}
\end{figure}

\noindent 
Here, each domain surrounded by the gray block is a set such that it may be regarded as 
$\mathcal{D}(RL^2RL)_1=\mathcal{D}(RL^2RL)_3, ~\mathcal{D}(RL^2RL)_2$ $=\mathcal{D}(RL^2RL)_4$
from the left. 
At this time, the arrangement of numbers in each domain has the relationship as shown below.

$$\mathcal{D}(RL^2RL)_2= \rotatebox[origin=c]{180}{$\reflectbox{ $\mathcal{D}(RL^2RL)_1$ }$}$$
$$\mathcal{D}(RL^2RL)_4= \rotatebox[origin=c]{180}{$\reflectbox{ $\mathcal{D}(RL^2RL)_1$ }$}$$

Consider the yellow $(8 \rightarrow 19 \rightarrow 11) $- curve and the blue $( 7 \rightarrow 19 \rightarrow 12) $-curve passing through the maximum value ``$19$" 
in the $RL^2RL$-type Conway-Coxeter frieze. 
At the ``$19$" in $\mathcal{D}(RL^2RL)_1$ then 
the yellow $(8 \rightarrow 19 \rightarrow 11 )$-curve is minus until it passes through it, plus up to the next ``$19$", minus after exceeding it, 
while the blue $(7 \rightarrow 19 \rightarrow 12 )$-curve is minus up to ``$19$" in $\mathcal{D}(RL^2RL) _1$. 
Before exceeding it, it increases to the next ``$19$", and after exceeding it, it is minus, and so on (see Figure~\ref{figw19}).

\begin{figure}[htbp]
\centering
\includegraphics[width=12cm]{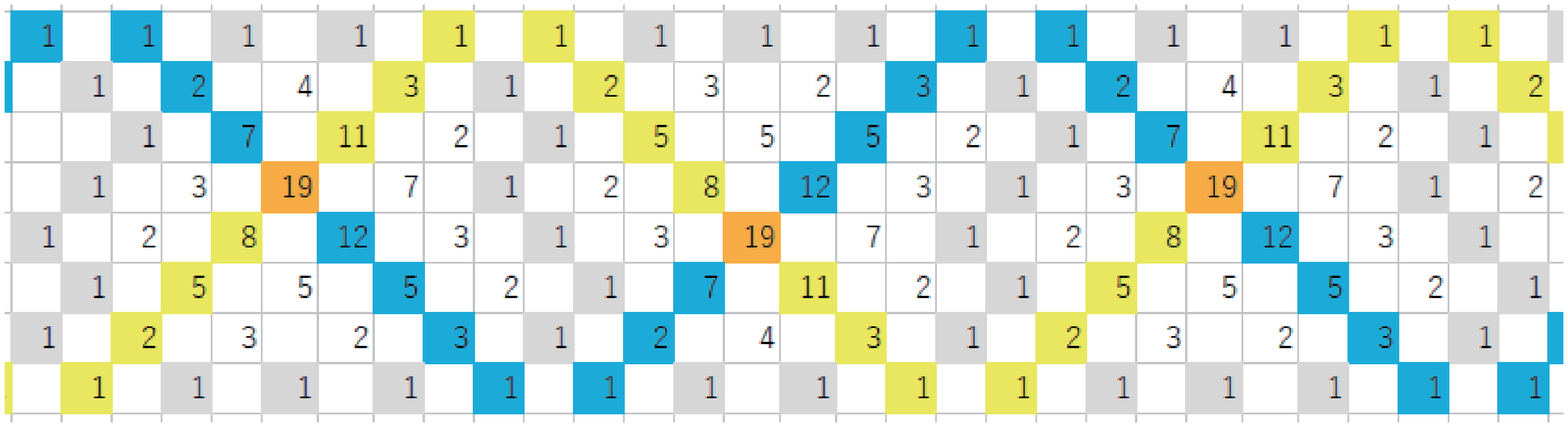} 
\caption{}\label{figw18}
\end{figure}

\medskip 
\begin{figure}[htbp]
\centering
\includegraphics[width=12cm]{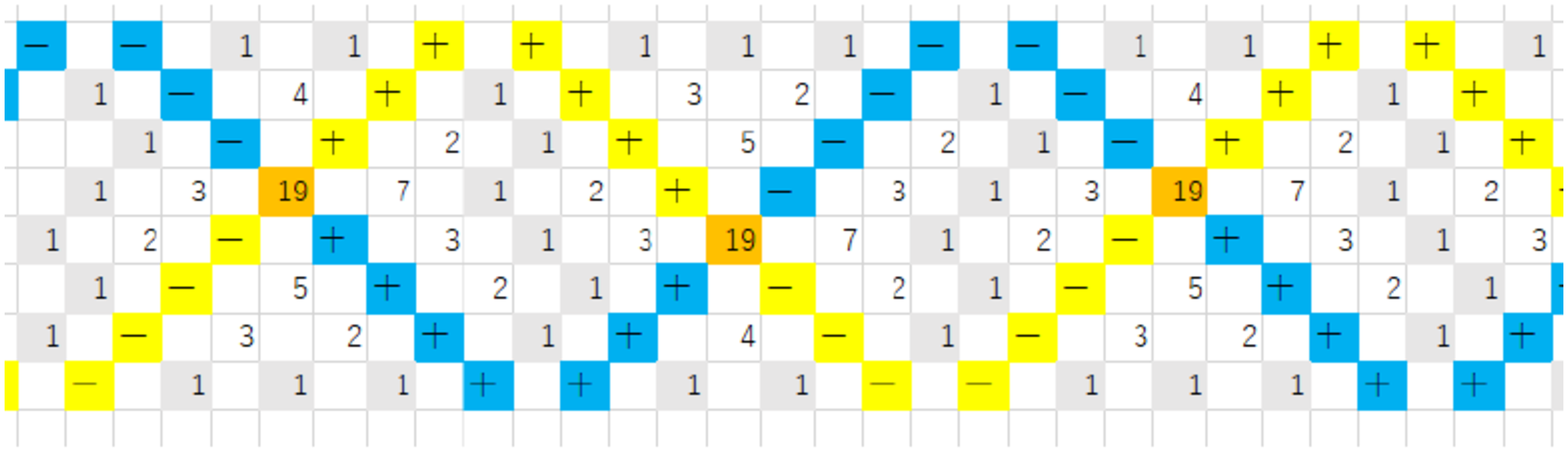} 
\caption{}\label{figw19}
\end{figure}

Let us focus on the yellow line. 
\begin{enumerate}
\item[$(i)$] At the maximum value ``$19$", this upward line is folded as shown in Figure~\ref{figw20}. 
\item[$(ii)$] Draw a line so that the number at the vertex of the triangle is the sum of the numbers at the bottom two vertices. 
\item[$(iii)$] Add signs so that a line segment with plus and minus appears from the vertex of the triangle.
(The signs on the left end of the large triangle are all minus, and the signs on the right end are all plus, so the signs of other line segments are determined naturally.)
\end{enumerate}

\begin{figure}[htbp]
\centering
\includegraphics[width=5.5cm]{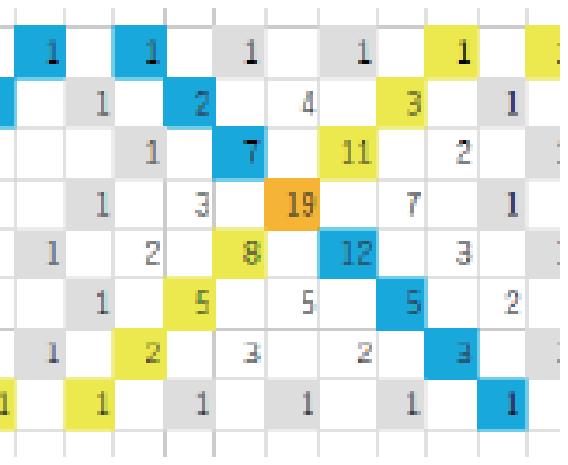} ~~
\includegraphics[width=7cm]{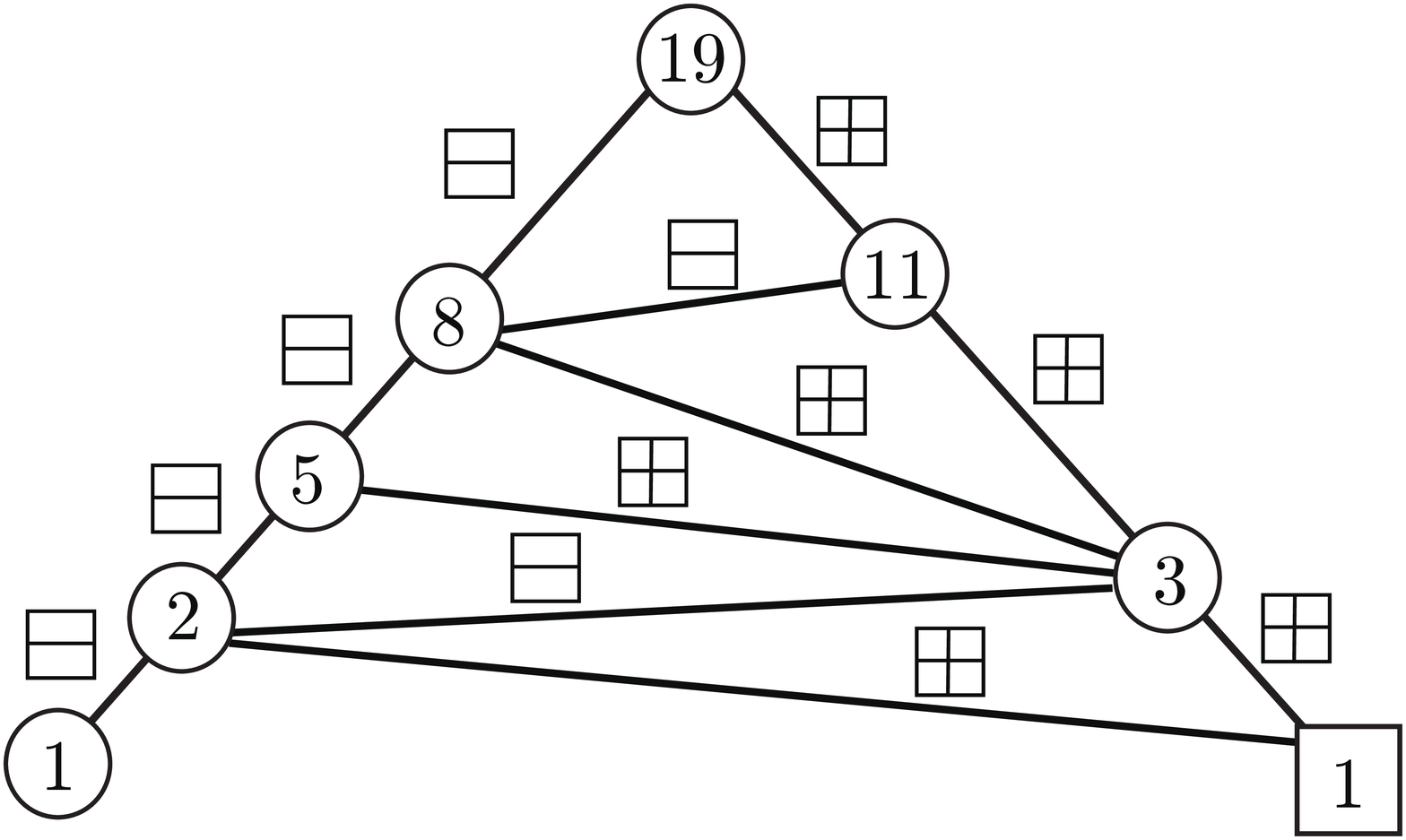} 
\caption{}\label{figw20}
\end{figure}

Next, consider the path going from ``$19$" to the lower left ``$1$"(=$\textcircled{1}$) and the lower right ``$1$" (=\fbox{$1$}), starting with ``$19$", followed by a path whose numbers decrease, which is shown graphically on the right side of Figure~\ref{figw21}.

\begin{figure}[htbp]
\centering
\includegraphics[width=6cm]{RL2RL-tri_arranged.eps}
\includegraphics[width=7cm]{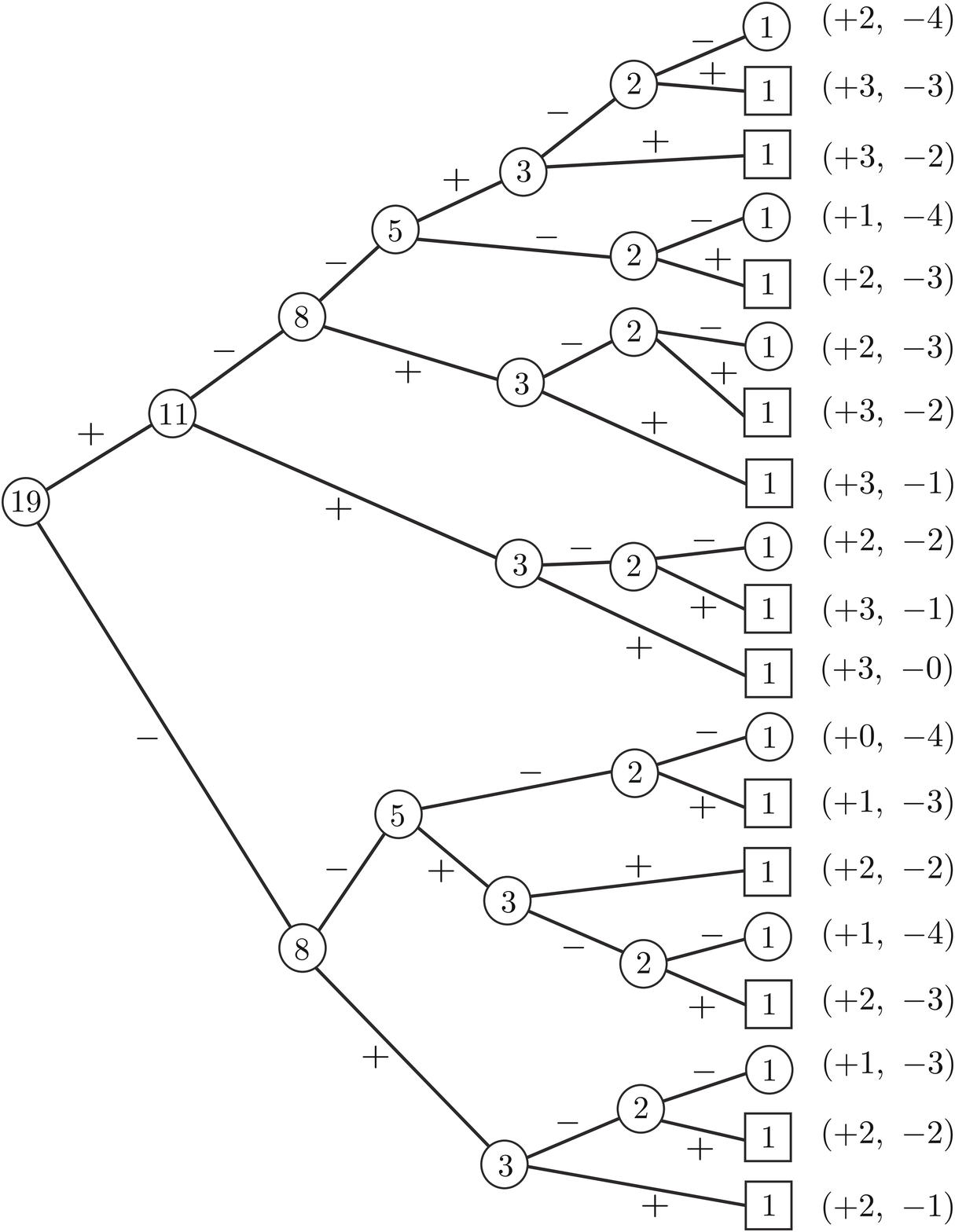} 
\caption{}\label{figw21}
\end{figure}

Next, for each path, substitute plus $+$ for weight $- A^{4}$ and minus $-$ for weight $-A^{-4} $, and compute the product of those weights.
\par 
Finally, sum their signed $ A $-monomials for all paths.

\medskip 
\begin{figure}[htbp]
\centering
\includegraphics[width=6cm]{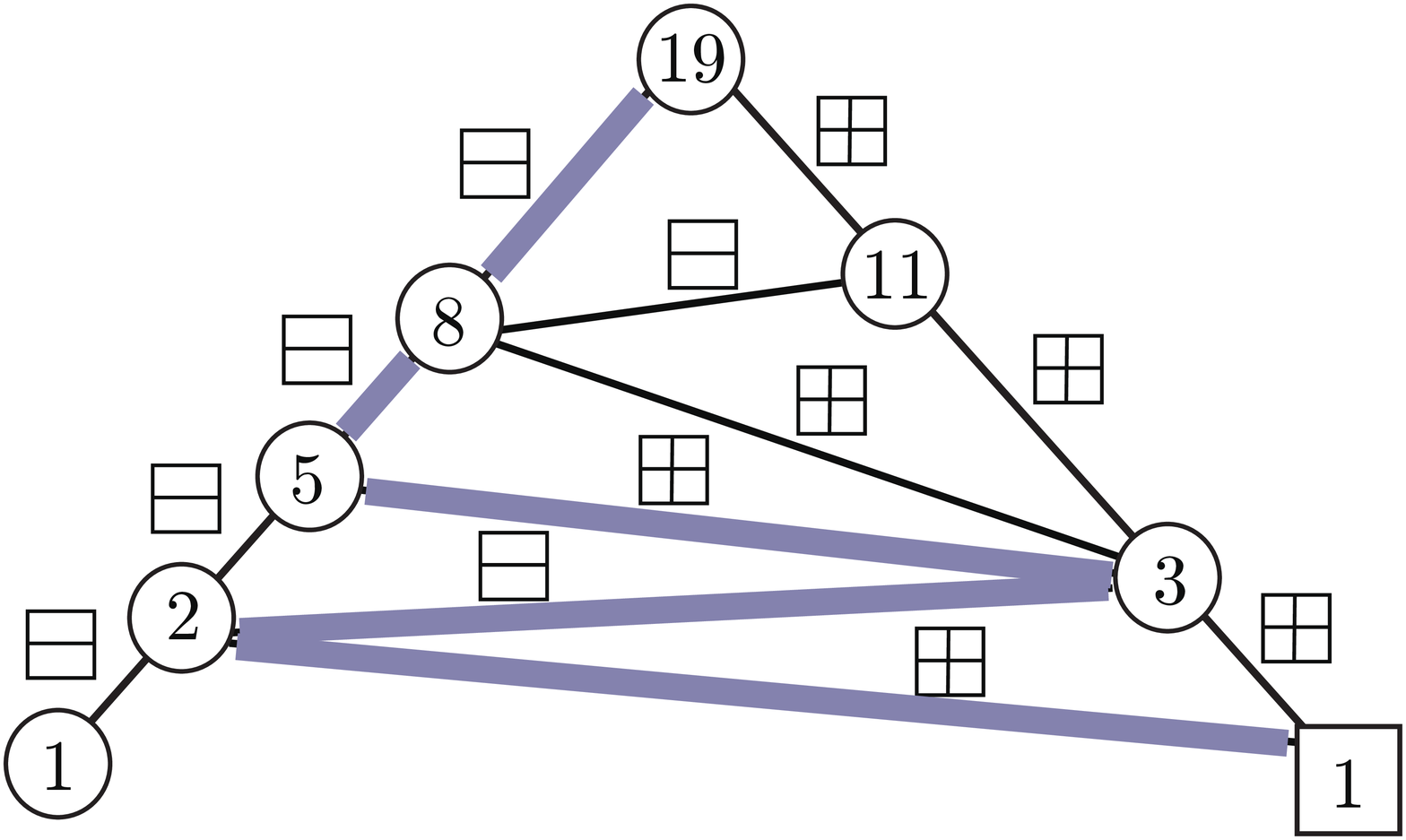}
\includegraphics[width=7cm]{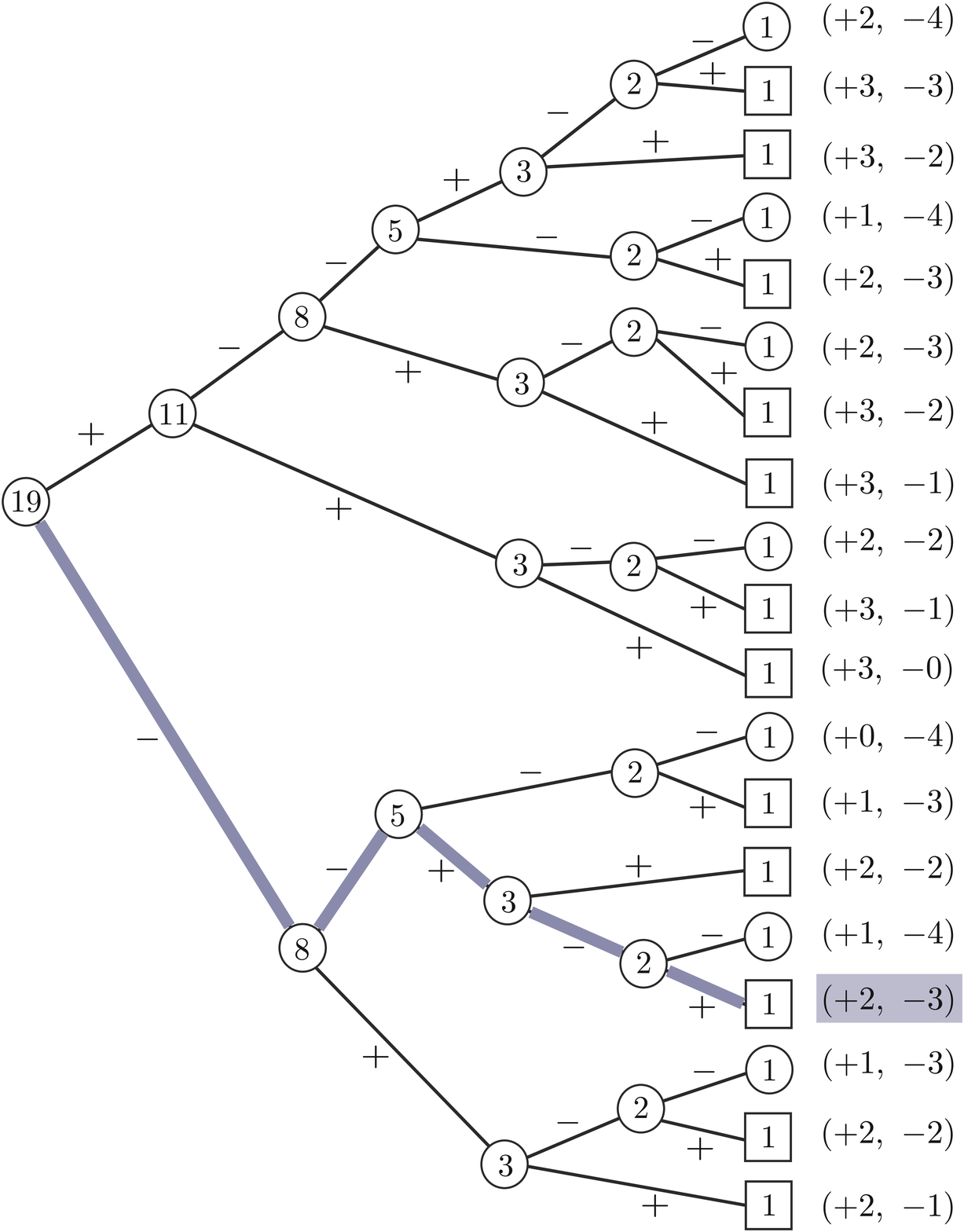} 
\caption{}\label{figw22}
\end{figure}

\noindent 
For example, 

\noindent 
Path $\gamma$:$~~19 \xrightarrow{-} 8 \xrightarrow{-} 5 \xrightarrow{+} 3 \xrightarrow{-}  2 \xrightarrow{+} \fbox{$1$} $ 

$\Rightarrow $

Path $\gamma$:$~~19 \xrightarrow{-A^{-4}} 8 \xrightarrow{-A^{-4}} 5 \xrightarrow{-A^4} 3 \xrightarrow{-A^{-4}}  2 \xrightarrow{-A^4} \fbox{$1$} $ 

\medskip

\noindent 
Path $\gamma$-monomial: $(-A^{-4}) \cdot (-A^{-4}) \cdot (-A^4) \cdot (-A^{-4}) \cdot (-A^4)=(-1)^{2+3} A^{4\cdot (2-3)}=-A^{-4}$

\medskip 

\noindent 
In general, for the corresponding signature $(+p,-q)$ on each path $\gamma$, associate a monomial 
$(-1)^{p+q} A^{4(p-q)}$. 

By \eqref{eq5.1}  we have 
\par \medskip 
\centerline{$\langle \Gamma ( RL^2RL) \rangle = -A^{12} +2A^8-3 A^4 +4 -3 A^{-4} +3 A^{-8} -2A^{-12} +A^{-16}$. }

\begin{figure}[htbp]
\centering
\includegraphics[width=7cm]{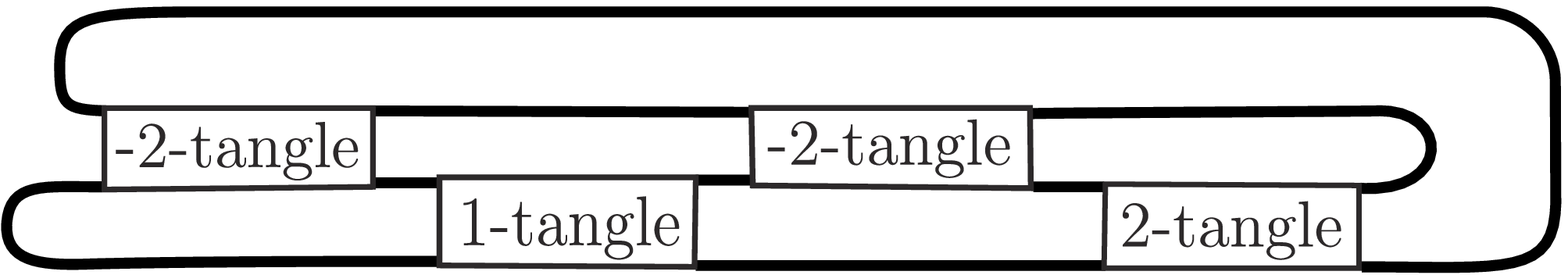} \ \ \raisebox{0.6cm}{$=$}\ \ 
\raisebox{-0.2cm}{\includegraphics[width=5cm]{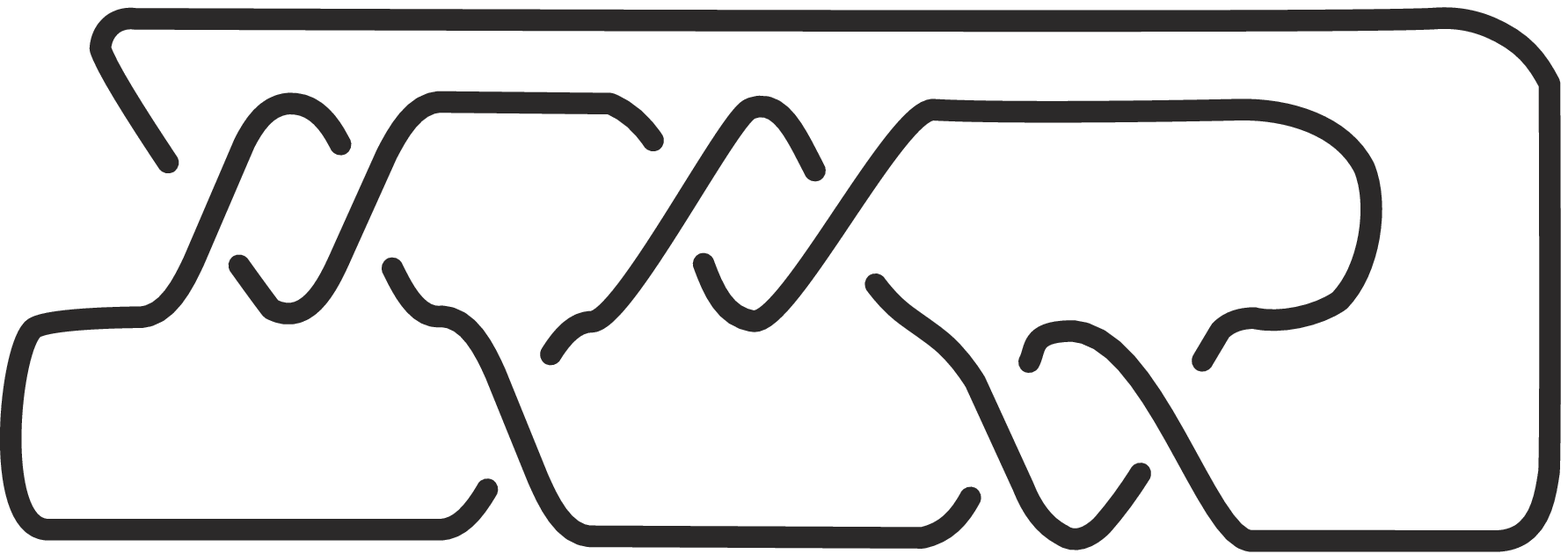}}
\caption{}\label{figw23}
\end{figure}

\par \medskip 
Remark that $A^{-16}= A^{-4(3+1)}=(-1)^{|RL^2 RL|_L +1}A^{-4 (|RL^2RL|_L +1)}$,  
$-A^{12}=(-1) A^{4 \times 3}=(-1)^{|RL^2 RL|_R +1}A^{4(|RL^2RL|_R +1)} $, and the 
sequence of coefficients $1,2,3,4,3,3,2,1$ is increasing and decreasing. 
From $\langle \Gamma (RL^2RL) \rangle$, by \eqref{eq5.3} we can get the  Kauffman bracket polynomial of the knot related to the  fraction $\frac{7}{19}=[0;2,1,2,2]$: 
\begin{align*}
\raisebox{0.6cm}{$\langle D(\frac{7}{19}) \rangle $}\ 
& \ \raisebox{0.6cm}{$=\Biggl\langle~ $}\includegraphics[width=4cm]{knot_7-19_01_arranged.eps}  ~\raisebox{0.6cm}{$\Biggr\rangle $} \\[-0.1cm]   
&=(-A^{-3})^{\sum_{i=0}^n (-1)^{i+1} a_i} \langle \Gamma (RL^2RL) \rangle \\ 
&=A^{15}-2A^{11}+3A^7-4A^{3}+3A^{-1}-3A^{-5}+2A^{-9}-A^{-13}.
\end{align*}

On the other hand 

\medskip 
\noindent 
$\left\{ \begin{array}{l} 
\langle \Gamma ( RL^2RL) \rangle =-A^{12} +2A^8-3 A^4 +4 -3 A^{-4} +3 A^{-8} -2A^{-12} +A^{-16}, \\
\langle \Gamma ( RL^2RL) \rangle_{\textrm{num}} =1-A^{-4}+2 A^{-8} -2 A^{-12} +A^{-16}.  
\end{array} \right. $

\medskip 
By substituting $A^4=-1$ we have
\medskip 

\noindent 
$\left\{ \begin{array}{ll} 
\langle \Gamma ( RL^2RL)  \rangle  &\xrightarrow{A^4 =-1} 19=\textrm{the denominator of the fraction $ \frac{7}{19}$},  \\
\langle \Gamma ( RL^2RL) \rangle_{\textrm{num}}  & \xrightarrow{A^4 =-1} 7=\textrm{the numerator of the fraction $ \frac{7}{19}$}, 
\end{array} \right. $

\medskip 
\noindent 
namely, 
$\frac{ \langle \Gamma ( RL^2RL) \rangle_{\textrm{num}} }{ \langle \Gamma ( RL^2RL) \rangle }
=\frac{1-A^{-4}+2 A^{-8} -2 A^{-12} +A^{-16}  }{-A^{12} +2A^8-3 A^4 +4 -3 A^{-4} +3 A^{-8} -2A^{-12} +A^{-16}}\ \xrightarrow{A^4=-1} \frac{7}{19}$. 
 
\medskip 
\begin{figure}[htbp]
\centering
\includegraphics[width=15cm]{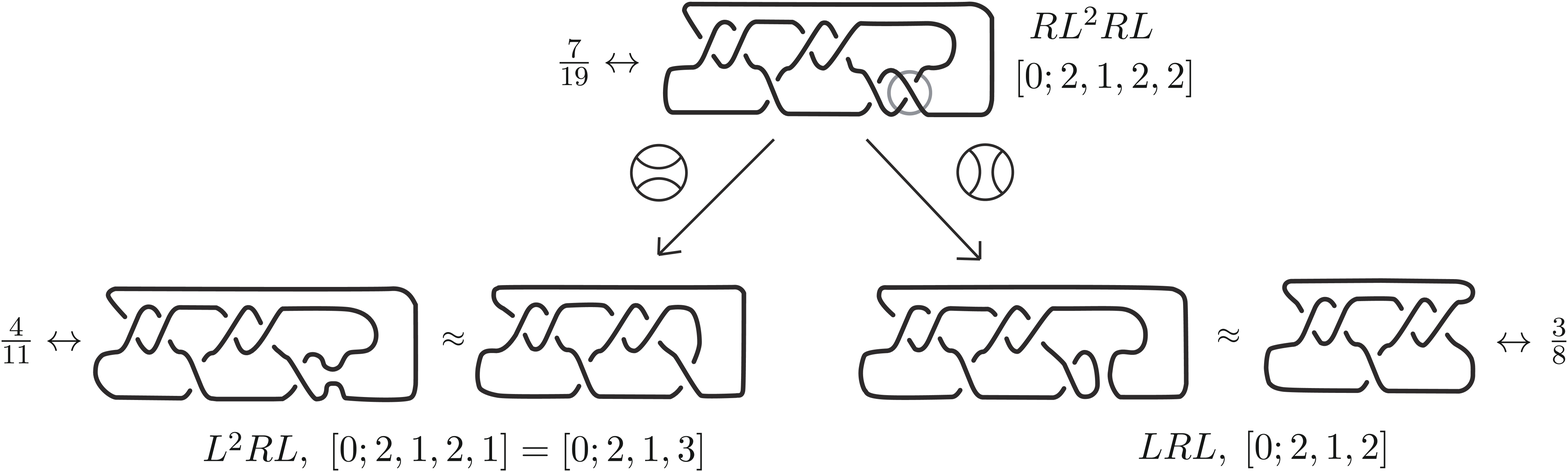} 
\caption{}\label{figw25}
\end{figure}

\medskip 
We note that the Farey sum $\frac{7}{19} = \frac{4}{11} \sharp \frac{3}{8}$ is related to the lateral removal and the vertical extension of the intersection of the rational entanglement as shown in Figure~\ref{figw25}, where  $~\approx~ $ means 
that the corresponding Kauffman bracket polynomials match except for a power multiplying factor of A except.
Here, the lateral removal of the three-dimensional intersection at the lower right of this $\frac{7}{19}$-knot eliminates the blue $(7 \rightarrow 19 \rightarrow 12 )$-curve passing through ``$19$" in the Conway-Coxeter frieze  $\Gamma (RL^{2}RL)$, compresses the empty space diagonally and converts one size small CCF.
The created CCF becomes the $L^{2}RL$ - type with ``$11$" as the maximum value, which corresponds to $\frac{4}{11}$. 
Similarly, considering the $(* \rightarrow  11 \rightarrow  * )$-curve and the $( * \rightarrow ,11, \rightarrow *)$-curve, the $(*\rightarrow  11 \rightarrow * )$-curve passing through ``$11$" is removed and compressed, and furthermore, it becomes the LRL-type CCF whose maximum value is one size small ``$8$". 
It corresponds to $\frac{3}{8}$. 


\begin{figure}[htbp]
\vspace*{-2cm}
\begin{align*}
\raisebox{0cm}{$RL^2RL$}  & \ \ \includegraphics[width=8cm]{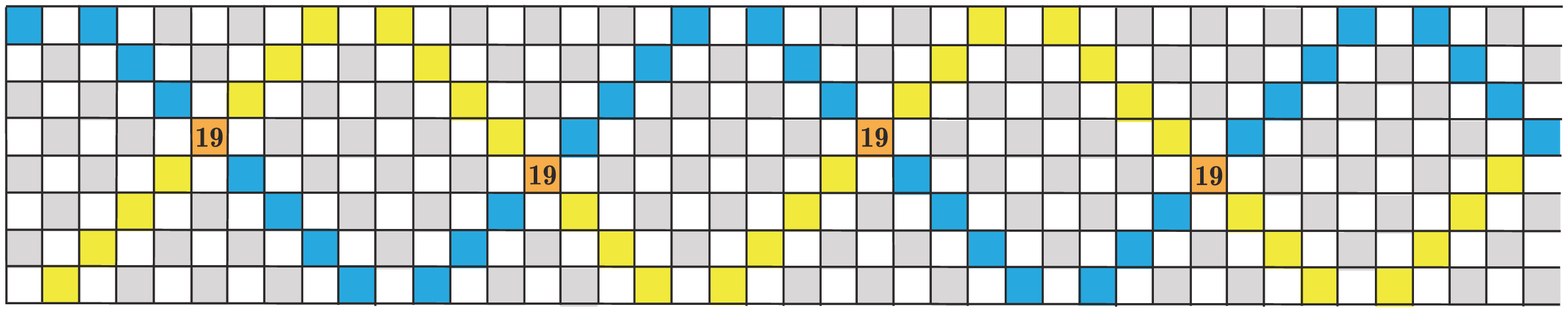} & \raisebox{-0.6cm}{\includegraphics[width=3cm]{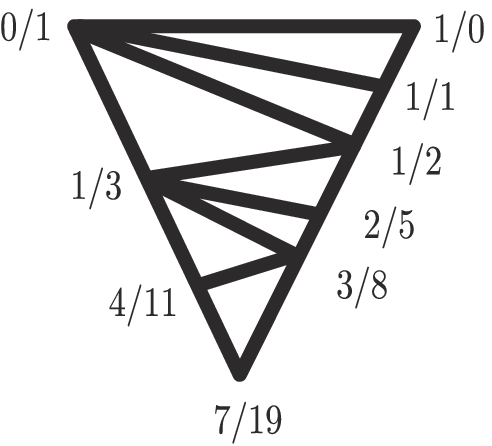}} & \  \ \raisebox{0cm}{$\dfrac{7}{19}$} \\  
\raisebox{0cm}{$RL^2RL$-\text{delete}}  & \ \  \includegraphics[width=8cm]{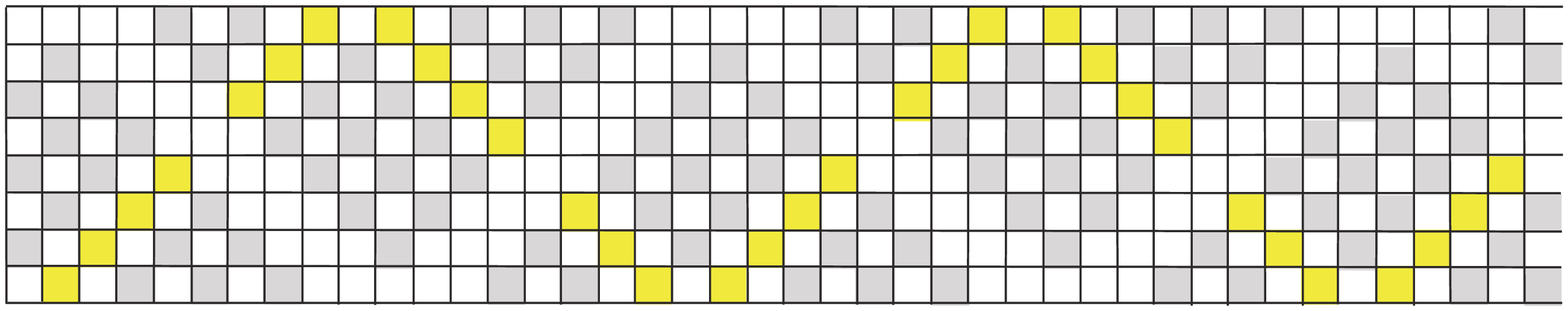} &  & \\[0.3cm]       
\raisebox{0cm}{$L^2RL$} & \ \ \includegraphics[width=8cm]{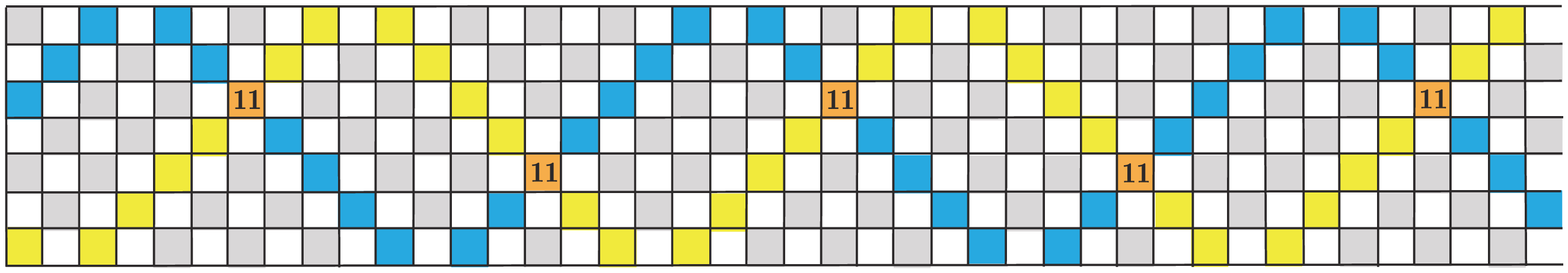} &  \  \  \raisebox{-0.5cm}{\includegraphics[width=2cm]{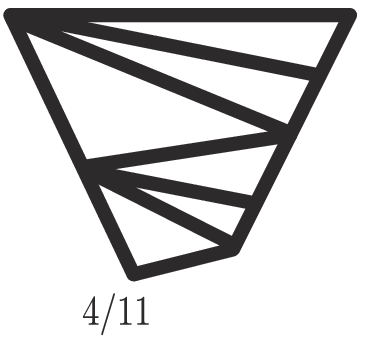}} & \ \ \raisebox{0cm}{$\dfrac{4}{11}$} \\   
\raisebox{0cm}{$L^2RL$-\text{delete}}  & \ \  \includegraphics[width=8cm]{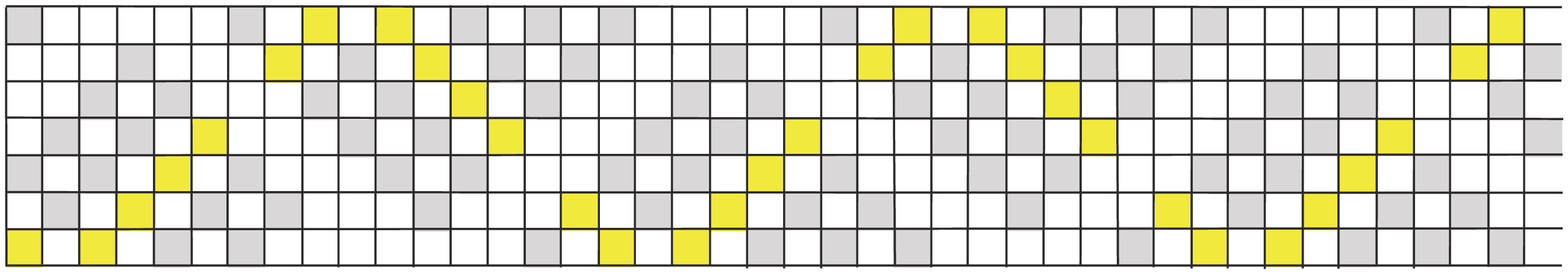} &  & \\[0.3cm]   
\raisebox{0cm}{$LRL$}  & \ \  \includegraphics[width=8cm]{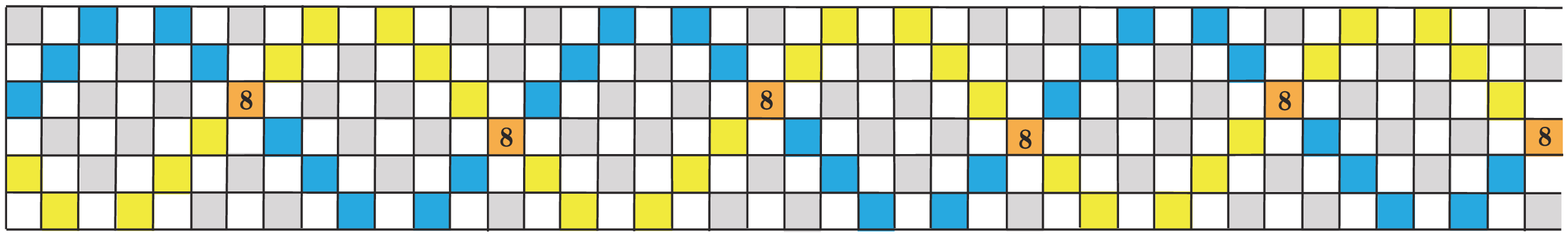} &  \raisebox{-0.4cm}{\includegraphics[width=2cm]{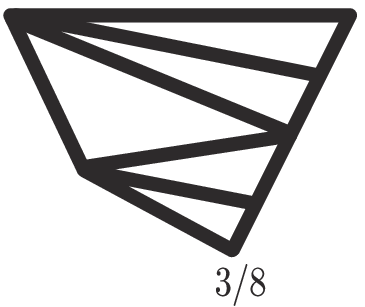}} & \  \  \raisebox{0.3cm}{$\dfrac{3}{8}$} \\[0.3cm]    
\raisebox{0cm}{$LRL$-\text{delete}}  & \ \  \includegraphics[width=8cm]{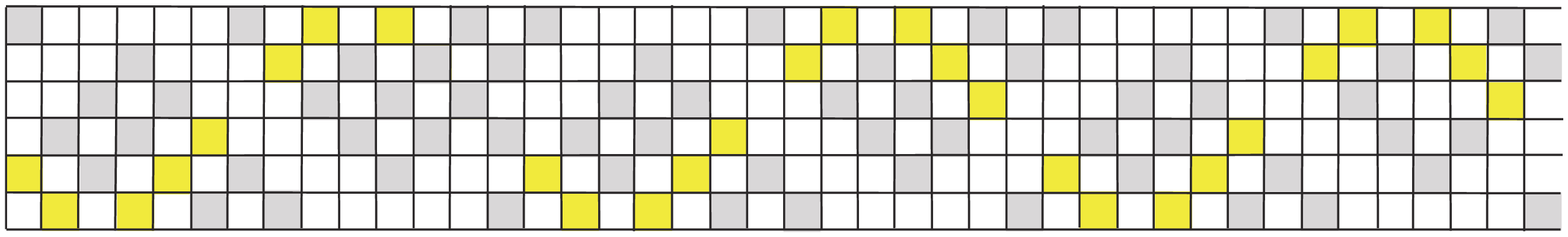} &  & \\[0.3cm]     
\raisebox{0cm}{$RL$} \ & \ \  \includegraphics[width=8cm]{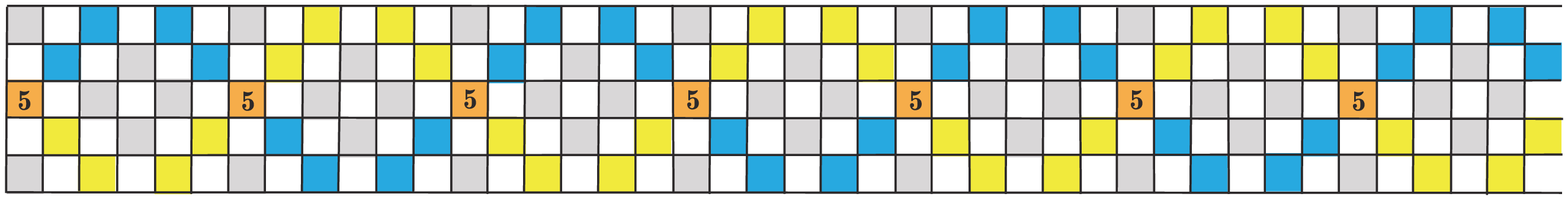} &  \raisebox{-0.4cm}{\includegraphics[width=2cm]{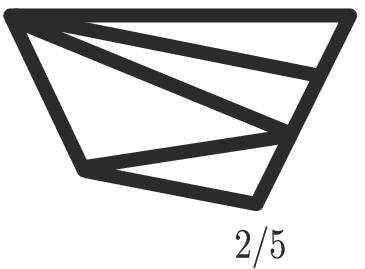}} & \  \  \raisebox{0.3cm}{$\dfrac{2}{5}$} \\[0.3cm]   
\raisebox{0cm}{$RL$-\text{delete}}  & \ \  \includegraphics[width=8cm]{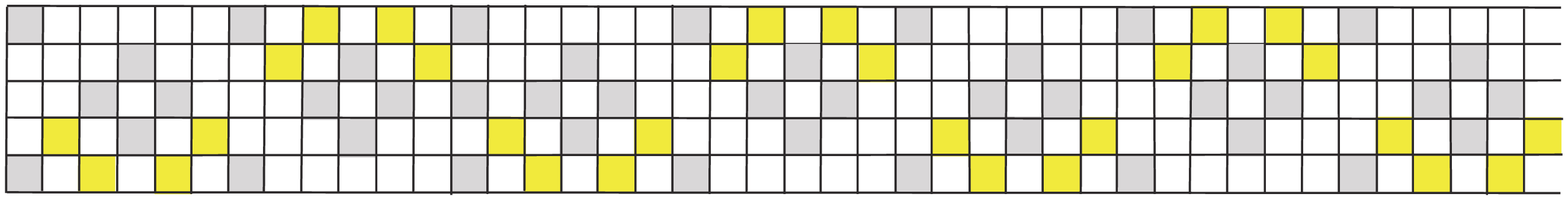} &  & \\[0.3cm]    
\raisebox{0cm}{$L$} \ & \ \  \includegraphics[width=8cm]{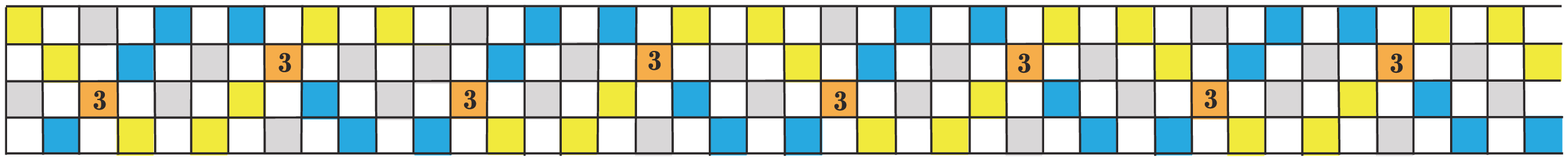} &  \raisebox{-0.4cm}{\includegraphics[width=2cm]{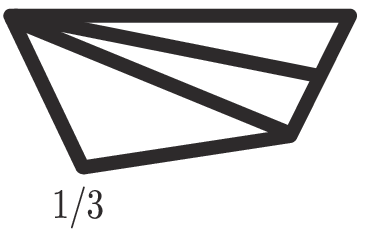}} & \  \ \raisebox{0.3cm}{$\dfrac{1}{3}$} \\[0.3cm]    
\raisebox{0cm}{$L$-\text{delete}}  & \ \  \includegraphics[width=8cm]{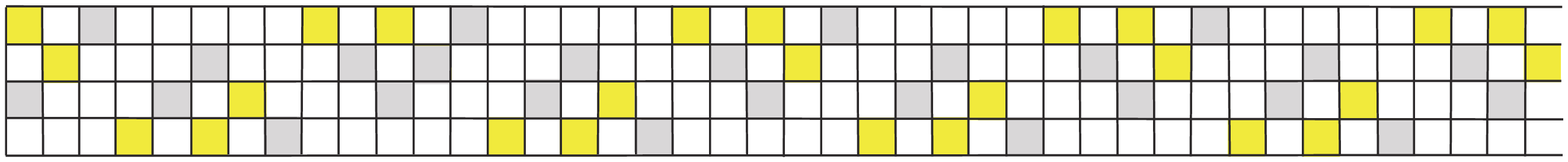} &  & \\[0.3cm]     
\raisebox{0cm}{$\emptyset$}  & \ \  \includegraphics[width=8cm]{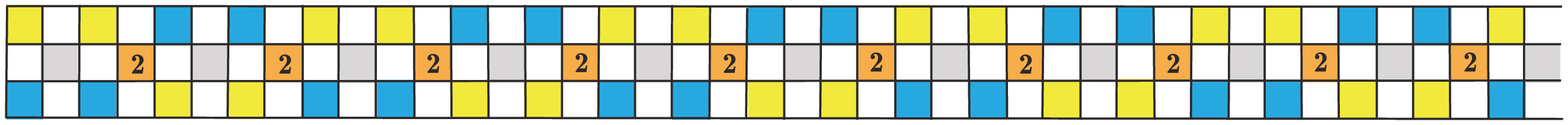} &  \raisebox{-0.4cm}{\includegraphics[width=2cm]{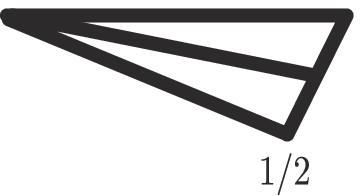}} & \  \  \raisebox{0cm}{$\dfrac{1}{2}$}
\end{align*}
\caption{}\label{figw26}
\end{figure}

Repeating this operations corresponds to repeating the operation of deleting ``cosine-curve" from the CCF of type $RL^2RL$ and of removing vertices from the associated Yamada's ancestor triangle of $\frac{7}{19}$ as Figure~\ref{figw26}. 
\end{exam}

\medskip 
The above process of deleting ``$* \rightarrow * \rightarrow *$-curves" yields a braid, which can be regarded as a link on a torus.  
We explain it by using the CCF of type $RL^2RL$ examined in Example~\ref{5.1}. 
In this case, the braid is given in Figure~\ref{figw27}.
It is created by clipping a part between two lines from top left to bottom right through $19$ in the CCF. 
\par 
Each strand labeled by an integer $k\ (k=19, 11, 8, 5, 3, 2, 1)$ in the braid corresponds to the ``$(* \rightarrow * \rightarrow *)$-curve" passing through $k$ in the CCF $\Gamma (RL^2RL)$. 
The crossings on the strand labeled by $19$ are determined as follows. 
From the left edge to $19$ in the middle all crossings are positive, and the rest are negative. 
For other crossings are inductively determined by the same manner. 
\par 
In the language of Yamada's ancestor triangle, 
removing the strand ``$19$" in the braid corresponds to deleting the bottom triangle from the Yamada's ancestor triangle of $\frac{7}{19}$. 
After the operation, we have a new braid. 
Then removing the strand ``$11$" from the new braid corresponds to deleting the bottom triangle from the Yamada's ancestor triangle of $\frac{4}{11}$, and so on. 
\par 
In this way, one have a braid from any CCF of zigzag-type. 
Such a braid is pure, and has the following property. 
For any two strands of the braid there are exactly two crossings with linking number $-2$. 
Detailed studies on these braids would be appeared elsewhere as a subsequent article. 

\begin{figure}[htbp]
\centering
\includegraphics[width=15cm]{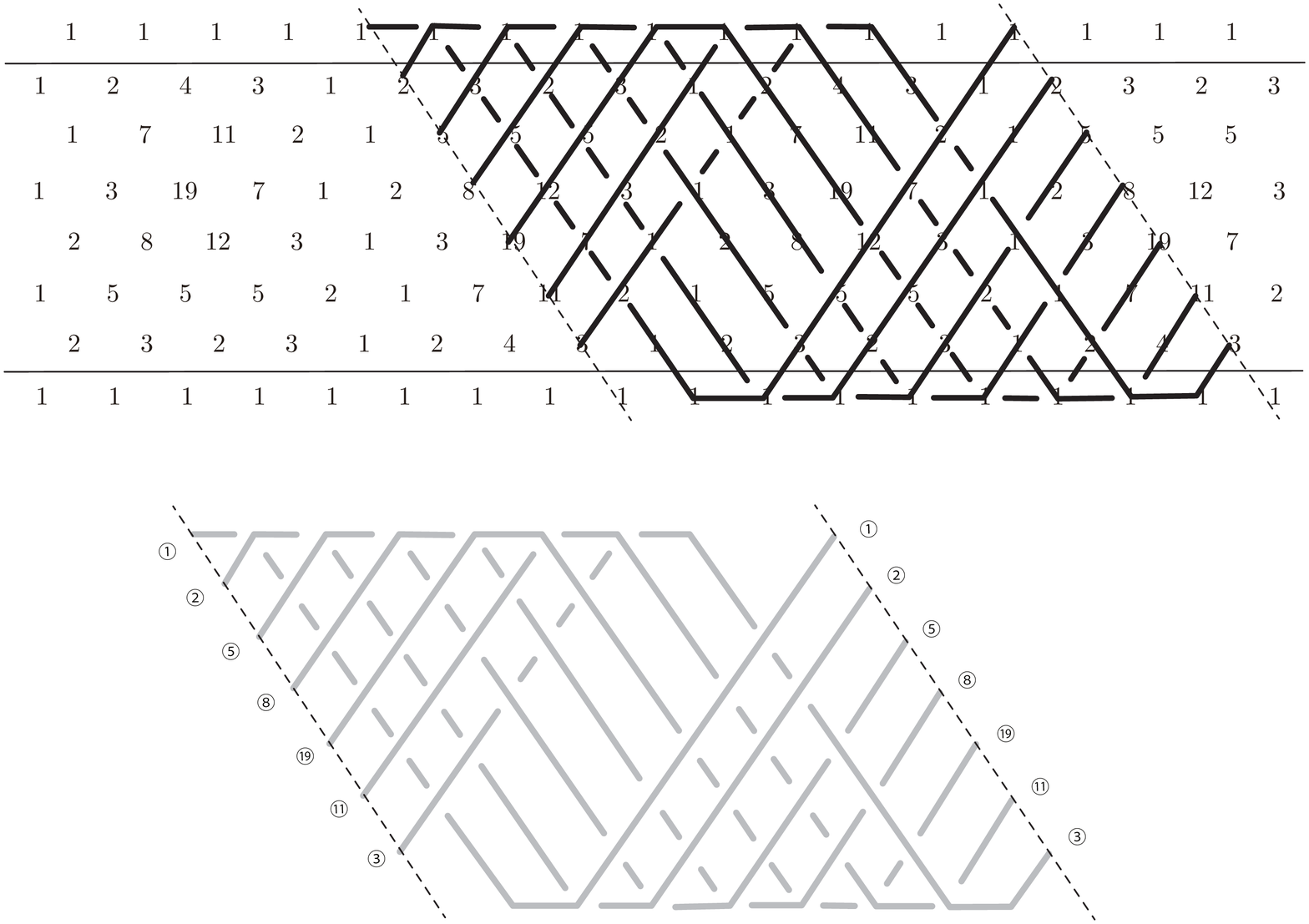} 
\caption{}\label{figw27}
\end{figure}

\par \bigskip \noindent 
{\bf Acknowledgments}.  
We would like to thank Professor Shuji Yamada for his kind message about his formula given in \cite{Yamada-Proceeding}. 
We would like to thank Professor Jun Murakami and Professor Yuji Terashima for useful comments on Yamada's ancestor triangles and rational knots. 
Thanks are also due to Professor Ralf Schiffler for informing us that our research is closely related to the work in the paper \cite{LS} with detailed explanation. 
We express appreciation to Dr. Shigeto Wada who created the calculation table of CCF by Excel. 

\newpage

\end{document}